\documentclass[a4paper,english,cleveref,thm-restate]{lipics-v2021} 

\usepackage{csquotes}
\usepackage{booktabs}
\usepackage{mathtools}
\usepackage{aligned-overset}
\usepackage[nocompress,noadjust]{cite}
\usepackage{xspace}
\usepackage{bm}
\usepackage{hypbmsec}
\usepackage{fnpct} 
\setfnpct{after-punct-space=-0.15em}

\usepackage[color=white!50!orange,disable]{todonotes}

\newtheorem{question}[theorem]{Question}

\newcommand{\thomas}[1]{\todo[linecolor=blue!25,backgroundcolor=blue!25]{\footnotesize#1}}
\newcommand{\Thomas}[1]{\todo[inline,caption=,backgroundcolor=blue!25]{#1}}

\hideLIPIcs  

\graphicspath{{figures/}}

\title{Product Structure and Treewidth of Hyperbolic Uniform Disk Graphs}

\author{Thomas Bläsius}{Karlsruhe Institute of Technology, Germany}{thomas.blaesius@kit.edu}{https://orcid.org/0000-0003-2450-744X}{}
\author{Emil Dohse}{Karlsruhe Institute of Technology, Germany}{}{}{}
\author{Deborah Haun}{Karlsruhe Institute of Technology, Germany}{deborah.haun@student.kit.edu}{https://orcid.org/0009-0004-2365-0804}{}
\author{Laura Merker}{Karlsruhe Institute of Technology, Germany}{laura.merker2@kit.edu}{https://orcid.org/0000-0003-1961-4531}{}

\authorrunning{T. Bläsius, E. Dohse, D. Haun, L. Merker} 

\Copyright{Thomas Bläsius, Emil Dohse, Deborah Haun, and Laura Merker} 

\ccsdesc[500]{Mathematics of computing~Graph theory}

\keywords{hyperbolic uniform disk graphs, product structure, treewidth}

\category{} 

\relatedversion{An extended abstract of this paper is published in the Proceedings of the 42nd International Symposium on Computational Geometry (SoCG 2026)}

\acknowledgements{%
    We thank Torsten Ueckerdt and Marcus Wilhelm for fruitful discussions.
    Funded by the Deutsche Forschungsgemeinschaft (DFG, German Research Foundation) -- Projektnummer 524989715
}

\nolinenumbers 

\EventEditors{Hee-Kap Ahn, Michael Hoffmann, and Amir Nayyeri}
\EventNoEds{3}
\EventLongTitle{42nd International Symposium on Computational Geometry (SoCG 2026)}
\EventShortTitle{SoCG 2026}
\EventAcronym{SoCG}
\EventYear{2026}
\EventDate{June 2--5, 2026}
\EventLocation{New Brunswick, NJ, USA}
\EventLogo{socg-logo.pdf}
\SeriesVolume{367}
\ArticleNo{XX}     

\renewcommand{\epsilon}{\varepsilon}
\renewcommand{\phi}{\varphi}

\DeclareMathOperator{\tw}{tw}

\newcommand{\calP}{\ensuremath{\mathcal{P}}\xspace}
\newcommand{\calG}{\ensuremath{\mathcal{G}}\xspace}
\newcommand{\Hyp}{\ensuremath{\mathbb{H}}\xspace}

\DeclareMathOperator{\rad}{\rho}

\begin{document}

\maketitle

\begin{abstract}
    Hyperbolic uniform disk graphs (HUDGs) are intersection graphs of disks with some radius $r$ in the hyperbolic plane, where $r$ may be constant or depend on the number of vertices in a family of HUDGs.
    We show that HUDGs with constant clique number do not admit \emph{product structure}, i.e., that there is no constant $c$ such that every such graph is a subgraph of $H \boxtimes P$ for some graph $H$ of treewidth at most $c$.
    This justifies that HUDGs are described as not having a grid-like structure in the literature, and is in contrast to unit disk graphs in the Euclidean plane, whose grid-like structure is evident from the fact that they are subgraphs of the strong product of two paths and a clique of constant size [Dvořák et al., '21, MATRIX Annals].
    By allowing $H$ to be any graph of constant treewidth instead of a path-like graph, we reject the possibility of a grid-like structure not merely by the maximum degree (which is unbounded for HUDGs) but due to their global structure.
    We complement this by showing that for every (sub-)constant $r$, HUDGs admit product structure, whereas the typical hyperbolic behavior is observed if $r$ grows with the number of vertices. 
    
    Our proof involves a family of $n$-vertex HUDGs with radius $\log n$ that has bounded clique number but unbounded treewidth, and one for which the ratio of treewidth and clique number is $\log n / \log \log n$.
    Up to a $ \log \log n $ factor, this negatively answers a question raised by Bläsius et al. [SoCG '25] asking whether balanced separators of HUDGs with radius $\log n$ can be covered by less than $ \log n $ cliques.
    Our results also imply that the local and layered tree-independence number of HUDGs are both unbounded, answering an open question of Dallard et al. [arXiv '25].
\end{abstract}

\clearpage

\section{Introduction}
\label{sec:intro}

In this paper, we investigate the structure of \emph{hyperbolic uniform disk graphs (HUDGs)}, i.e., graphs $ G $ for which there is an $ r $ such that $ G $ is the intersection graph of disks with radius $ r $ in the hyperbolic plane.
The structure of these graphs heavily depends on $ r $, where $ r $ may be constant or a function depending on the number $ n $ of vertices for a family of HUDGs.
In the literature~\cite{blasius_strongly_2023, Struc_Indep_Hyper_Unifor_Disk_Graph-Blaesius25}, HUDGs with very small $ r $ are described as almost Euclidean or grid-like, and as firmly hyperbolic or more hierarchical for larger $ r $; see Figure~\ref{fig:intro-hudg-radius}.
However, the state of the art cannot fully explain in what sense the grid-like structure depends on $ r $.
We contribute to the understanding of the structure of HUDGs by formalizing when a graph has a grid-like structure and showing that this is indeed not satisfied by HUDGs, unless the radius is small.
We do so by showing that HUDGs do not admit a so-called \emph{product structure}, and thereby also deepen the understanding of this notion.
Before stating our results precisely, we review what is already known on the structure of HUDGs and how it depends on the radius $r$.

\begin{figure}[b]
    \centering
    \includegraphics[scale=0.85]{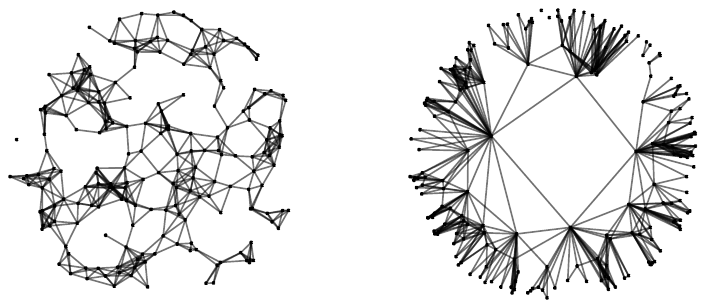}
    \caption{HUDGs with random vertex positions.
      Left: The radius $r$ is so small (think of $1/\sqrt{n}$) that it is indistinguishable from the Euclidean setting.
      Right: The disk radius $r$ is logarithmic in $n$.}
    \label{fig:intro-hudg-radius}
\end{figure}

We start with small radii and work our way up from there.
Locally, the hyperbolic plane behaves like the Euclidean plane, which sets the expectation that very small $r$ yields graphs similar to \emph{Euclidean unit disk graphs (EUDGs)}, which are defined as intersection graphs of unit disks in the Euclidean plane.
While there are always HUDGs that are not EUDGs independent of how small we choose $r$, it is true that every EUDG is in fact a HUDG if we choose $r$ sufficiently small~\cite{blasius_strongly_2023}.
Sufficiently small here means that $r$ has to shrink with the number of vertices $n$.
EUDGs~\cite{clark_unit_1990} are a well studied graph class that allows for balanced separators that can be covered with $ O(\sqrt{n}) $ cliques~\cite{Framew_Expon_Time_Hypot-Berg20}.
We remark that balanced separators are arguably one of the main reasons why HUDGs, EUDGs, and more generally disk graphs, are interesting from an algorithmic perspective.
For HUDGs, there are balanced separators that can be covered with $ O((1 + 1/r) \log n) $ cliques~\cite{Struc_Indep_Hyper_Unifor_Disk_Graph-Blaesius25}.
Interestingly, this matches the $\sqrt{n}$ bound of the Euclidean case for $r \in \Theta(1/\sqrt{n})$ up to a $\log n$ factor and improves for larger $r$.
In fact, if $r \in \Omega(1)$, this gives separators coverable with $\log n$ cliques.
This gives a strong argument that the structure of HUDGs changes when going from sub-constant to at least constant $r$.
This change in structure is also reflected in the fact that several problems can be solved more efficiently for constant $r$ than in the Euclidean setting~\cite{kisfaludi_hyperbolic_2019,Struc_Indep_Hyper_Unifor_Disk_Graph-Blaesius25}.
Summarizing, there is strong evidence that the structure of HUDGs change significantly between sub-constant $r$ and constant $r$.

This raises the question whether there is a similar structural difference between constant $r$ and super-constant $r$.
While the above separators do not continue to get smaller, there are some indications that super-constant $r$ makes a difference, and that in particular $r \in \Theta(\log n)$ is an interesting case.
For HUDGs with $r \in \Theta(\log n)$, the independent set problem can be solved in polynomial time~\cite{Struc_Indep_Hyper_Unifor_Disk_Graph-Blaesius25}, which is in contrast to constant $r$~\cite{kisfaludi_hyperbolic_2019}.
Moreover, HUDGs with radius $r \in \Theta(\log n)$ and random vertex positions are so-called hyperbolic random graphs~\cite{krioukov_hyperbolic_2010}; a random graph model that is popular due to its resemblance to real-world networks in regards to properties like clustering, diameter, and degree distribution~\cite{friedrich_diameter_2018,friedrich_diameter_2018_conf,Random_Hyper_Graph-Gugel12}.
While this indicates a significant difference between constant and growing $r$, the state of the art does not make the structural differences explicit.
The only structural insight we are aware of is that large stars, or more generally high-degree vertices with sparse neighborhoods, require $r$ to grow logarithmically with the degree~\cite{blasius_strongly_2023}.

The main result of this paper is to show that every class of HUDGs with constant radius $r$ has product structure, while there are HUDGs with growing radius that do not.
This closes two gaps in the literature: First, it justifies why HUDGs with large radii should be considered \emph{not} grid-like.
And second, the contrast to HUDGs with constant radius shows a stark structural difference between constant radius compared to larger radii, say $ \log n $.

\begin{figure}
    \centering
    \includegraphics{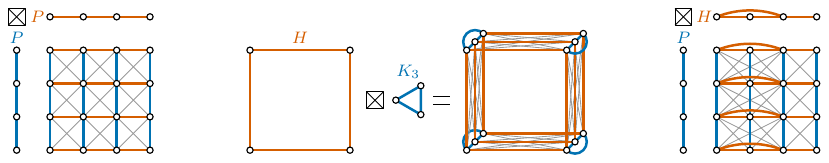}
    \caption{The strong product of two paths (left), of a graph $ H $ and a clique (middle), and a graph $ H $ with a path (right)}
    \label{fig:intro_product_structure}
\end{figure}

Concerning the notion of grid-like structure, observe that $\sqrt{n} \times \sqrt{n}$-grids are EUDGs and thus also HUDGs for small radius.
However, the grid-like structure of EUDGs goes far beyond merely containing grids.
In fact, EUDGs are in general rather grid-like in the sense that there may be large cliques locally, but the global structure of every EUDG behaves like (the subgraph of) a grid.
This notion of being \enquote{grid-like} can be formalized using the \emph{strong product} of graphs, which we introduce formally in \cref{sec:preliminaries}.
For now, it suffices to know that the strong product $P_a \boxtimes P_b$ of two paths on $a$ and $b$ vertices, respectively, is the $a \times b$-grid with diagonals.
Moreover, the strong product $H \boxtimes K_k$ of a graph $H$ with the complete graph $K_k$ on $k$ vertices expands each vertex of $H$ to a $k$-clique as shown in \cref{fig:intro_product_structure}.
Dvořák, Huynh, Joret, Liu, and Wood~\cite{dvorak_notes_2021} show that every EUDG $G$ is a subgraph of $P \boxtimes P \boxtimes K_k$, where $P$ is a path and $k$ is linear in the clique number $ \omega $ of $G$.
Equivalently, one can partition the vertices of a EUDG into sets of size $O(\omega)$ such that contracting each subset into a single vertex yields a subgraph $ H $ of a grid (with diagonals), yielding a product of the form $ H \boxtimes K_k $.

More generally, the concept of product structure allows us to view the graphs of a graph class as subgraphs of the product of simpler graphs; usually a graph with small treewidth and a path.
We say a graph class \calG admits \emph{product structure} if for every $ G \in \calG $, we have $ G \subseteq H'' \boxtimes P $ for some graph $ H'' $ of constant treewidth and some path $ P $.
In our proofs, we show a stronger statement, namely that the graphs we consider are subgraphs of $ H \boxtimes K_k $ for some hyperbolic tiling $ H $.
Since planar graphs, which include hyperbolic tilings, admit product structure, this implies $ H \subseteq H' \boxtimes P \boxtimes K_3 $ for some graph $ H $ of constant treewidth~\cite{dujmovic_planar_2019,dujmovic_planar_2019_conf}, so with $ H'' = H' \boxtimes K_3 \boxtimes K_k $ we obtain a product of the form $ H'' \boxtimes P $.
Note that the above mentioned result on EUDGs can be viewed in this framework by choosing $ H = P \boxtimes P $.
%
%
We remark that replacing one of the paths in the product $ P \boxtimes P \boxtimes K_k $ by a graph $ H' $, enables graphs that go beyond Euclidean square-grids.
In particular, it allows for high-degree vertices.
That is, rejecting a grid-like structure via product structure is significantly stronger than showing that a graph class does not allow for a decomposition of the form $ P \boxtimes P \boxtimes K_k $.
To finish the discussion on grid-like structure, let us mention that \enquote{having product structure} does not need to be seen as a binary decision.
For this, the \emph{row-treewidth} of a graph $ G $ is the smallest $ t $ such that $ G \subseteq H'' \boxtimes P $ for some graph $ H'' $ of treewidth $ t $ and some path $ P $.
The row-treewidth of a graph class is the maximum row-treewidth among all contained graphs, possibly as a function depending on the number of vertices if it is unbounded.
Now, among graph classes admitting product structure, we consider a class to have a stronger grid-like structure if the row-treewidth is small.
That is, in the strongest case of row-treewidth 1, $ H $ is a tree or even a path.
Similarly, a graph class is close to having product structure if the function bounding the row-treewidth is small.

Our results that some families of HUDGs admit product structure while others do not, 
make HUDGs also interesting from the product structure perspective.
Product structure has proved to be a useful tool for many graph-theoretic (see \cite{hickingbotham_shallow_2024} for a survey) and algorithmic~\cite{an_PS_algo_application_2025,bose_optimal_2022} applications.
While naturally, the literature concentrates on graph classes that admit product structure~\cite{dujmovic_planar_2019,campbell_product_2022,hickingbotham_shallow_2024,hickingbotham_product_2024,distel_improved_2022,dujmovic_graph_2023}, it is also interesting to find graph classes that do not admit product structure, for an other reason than the presence of large cliques.
Interestingly, many known constructions are geometric intersection graphs, namely different kinds of string graphs with constant clique number~\cite{merker_intersection_2024,karol_string_product_structure_2025}.
\todo{add non-geometry graph classes? \cite{bose_ltw_rtw_2022}, apex-grids (e.g. $ K_6 $-minor free graphs), 3-regular graphs, 1-gap planar graphs, and 3-quasi planar graphs}
By showing that HUDGs do not admit product structure, we contribute to an improved understanding of the limitations of this notion.

\subsection{Main Results and Discussion}

The following theorem states our main result that HUDGs do not admit product structure, even when restricted to constant clique size, rejecting the possibility of a global grid-structure. 
\begin{restatable}{theorem}{mainNoPSApp}\label{main:noPS}
    Hyperbolic uniform disk graphs with constant clique number do not admit product structure.
\end{restatable}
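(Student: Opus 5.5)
The plan is to build a family of HUDGs $G_n$ with radius about $\log n$, bounded clique number, and row-treewidth tending to infinity. The key tool is a local-versus-global treewidth argument. If $G \subseteq H \boxtimes P$ with $\tw(H) \le c$, then every subgraph of $G$ with radius at most $d$ (radius measured in $G$) lies in $H \boxtimes P'$, where $P'$ is a subpath on at most $2d+1$ vertices. Hence its treewidth is at most $(c+1)(2d+1)-1$. So graphs with product structure have \emph{bounded local treewidth}: treewidth bounded by a function $f(d)$ of the radius $d$. It therefore suffices, for some fixed $d$, to find HUDGs with constant clique number, radius at most $d$, and unbounded treewidth.

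\textbf{Construction.} Exploit the exponential growth of the hyperbolic plane. For a large parameter $m$, place a hub vertex at the origin. Surround it with many concentric circles at hyperbolic distances $\rho_1 < \dots < \rho_k$, all of order $r$, where $r$ is the common disk radius (intersection distance $2r$). On each circle, place vertices at spacing just under $2r$ so that consecutive vertices on a circle are adjacent. Because a circle of radius $\rho$ has circumference of order $e^{\rho}$, we can place roughly $e^{\rho - 2r}\cdot$const points per circle. Choosing the spacings so that each circle is within distance $2r$ of the next gives a cylinder/grid-like structure: a $k \times L$ grid-like subgraph. Choosing $\rho_k \le 2r$ keeps everything adjacent or close to the hub, so the radius is at most 2.

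Bounded clique number should follow from a packing argument. Points pairwise within distance $2r$ must all lie near one location, and the spacing forbids many of them. Points near the hub are the exception, so I would instead connect the hub to the structure via short paths of bounded length rather than using a direct hub.

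\textbf{Treewidth lower bound.} Exhibit a $k\times k$ grid minor (or a bramble) inside the circle structure. The radial edges between consecutive circles, together with the circular paths, form a cylindrical grid. We need $k$ circles, each with at least $k$ vertices, pairwise connected. Treewidth is then at least $k$, which is unbounded as $n\to\infty$ and $r\sim\log n$. The local radius stays bounded because every vertex is within constant graph distance of the hub. In the hyperbolic plane, points at distance $\le 2r$ from the origin span an angle wide enough that radial paths of constant hop length reach every sector. This is exactly what fails in the Euclidean plane.

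\textbf{Main obstacle.} The hard part is making bounded radius, a large grid minor, and bounded clique number hold simultaneously. A large grid needs many vertices, but bounded radius forces them into a ball of graph-radius $d$, hence of hyperbolic radius $\le 2rd$. The grid must also be geometrically sparse, so that no $\omega$ points are mutually close. I would first try taking the grid rows at radii $\rho_i = i\cdot\delta$ with angular offsets tuned by hyperbolic trigonometry (the hyperbolic law of cosines). The goal is that each point sees only its two ring-neighbours and at most two points on adjacent rings. That yields a planar cylindrical grid of maximum degree 4 and clique number at most 3. The hub is then connected through one extra ring of large-radius "spokes". The calculation verifying these distance inequalities is where I expect the real work to lie.
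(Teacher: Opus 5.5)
\textbf{There is a genuine gap: the construction, which is the entire content of the theorem, is missing, and the design you commit to at the end cannot work.} Your reduction itself is sound and is essentially the paper's: if $G\subseteq H\boxtimes P$ with $\tw(H)\le c$, then bounded-radius subgraphs have bounded treewidth (the paper uses closed neighbourhoods). So it suffices to find HUDGs of bounded radius and bounded clique number but unbounded treewidth. A universal hub raises the clique number by exactly one, so there is no need to avoid it.

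The planar-grid design fails in both cases:
\begin{itemize}
\item \emph{Rings within $2r$ of the hub.} Take consecutive vertices $q,q'$ of a ring. The disk of radius $2r$ around $q$ contains the hub and the arc from $q$ to $q'$, hence by convexity the whole sector they span. So every vertex at smaller radius is adjacent to the two vertices of that ring flanking it. Every ring therefore sees every outer ring, and a planar grid in which points see only adjacent rings is impossible. Combined with the fact that two crossing edges of a HUDG always span a triangle, this even shows that triangle-free induced subgraphs of SHUDGs are outerplanar, so no apex-over-grid exists there.
\item \emph{Rings beyond $2r$.} Plug the angular spacing of a ring (consecutive vertices at distance at most $2r$) into the formula for $\theta_{2r}(\rho_1,\rho_2)$. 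This shows that a vertex at radius $\rho$ has a neighbour on every concentric ring at radius in $[\rho,\rho+2r-O(e^{-2r})]$. Hence rings $i$ and $i+2$ can be non-adjacent only if $\rho_{i+2}-\rho_i\gtrsim 2r$, so ring $k$ lies at hyperbolic distance about $kr$ from the hub. Since every edge spans at most $2r$, it is then at graph distance about $k/2$ whatever spokes you add. A $k$-row grid inside a ball of radius about $k/2$ is compatible with product structure, so your local-treewidth bound gives no contradiction.
\end{itemize}

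Your first picture, with all rings within $2r$ of the hub, is the right one. There, however, the treewidth comes from a clique minor (contract each ring; every two rings are adjacent), not from a planar grid. All the difficulty then lies in the clique number, which your packing heuristic does not address. A clique can take up to two vertices from each of many rings, so you must control how many rings a clique can meet.

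The paper does this as follows:
\begin{itemize}
\item Level $k$ is a regular $2^k$-gon of side $2r$.
\item A level-$i$ vertex has between $2$ and $4\sqrt{2^{j-i}}$ neighbours on level $j$. This count grows like $\sqrt{2}^{\,j-i}$, while the levels grow like $2^{j-i}$.
\item Comparing the two forces $k-j\le j-i+6$ for every triangle on levels $i\le j\le k$.
\item Keeping only the levels $2^3,2^4,\dots,2^{\log_2 r}$, every triangle avoiding the root meets at most two levels. So the clique number is at most $5$, while contracting levels still yields a clique minor of size $\Theta(\log r)=\Theta(\log\log n)$.
\end{itemize}
This comparison of neighbour count against level growth, together with the sparsification of levels, is the idea your proposal is missing.
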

Our proof of \cref{main:noPS} requires the radius to grow with the graph size.
This is indeed necessary as HUDGs with constant or sub-constant disk radius do admit product structure.
\begin{restatable}{theorem}{mainPSApp}\label{main:rPS}
    Every family of hyperbolic uniform disk graphs with clique number and disk radius in $ O(1) $ admits product structure.
    In contrast, for every super-constant $ r $, there are families of hyperbolic uniform disk graphs with constant clique number and disk radius in  $ \Theta(r) $ not admitting product structure.
\end{restatable}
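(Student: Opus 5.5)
The plan has two parts, matching the two halves of \cref{main:rPS}.

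\emph{Positive part (radius and clique number in $O(1)$).} Fix $r\le r_0$ and $\omega\le\omega_0$. Take a regular hyperbolic tiling $H$ by polygons of small diameter; for $r$ below a fixed threshold we rescale, since locally the plane is Euclidean and we can use the grid $P\boxtimes P$ (as for EUDGs~\cite{dvorak_notes_2021}). The tiles are chosen so that each has diameter at most $2r$, hence the vertices inside one tile form a clique and there are at most $\omega$ of them. Map every vertex to the tile containing its center. If two disks intersect, their centers are at distance at most $2r$. Since the tiles have side length comparable to $r$ and the radius is bounded, this distance is covered by a bounded number $d$ of tile steps in $H$. So $G\subseteq H^{d}\boxtimes K_\omega$, where $H^{d}$ is the $d$-th power of $H$.

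Because the tiling has bounded degree and $d$ is constant, $H^{d}$ has bounded degree and is itself a subgraph of $H\boxtimes K_{c}$ for a constant $c$: partition $H$ into connected pieces of bounded radius and contract them, or, more simply, use the known fact that powers of bounded-degree planar graphs admit product structure. Then planar product structure~\cite{dujmovic_planar_2019} gives $H\subseteq H'\boxtimes P\boxtimes K_3$, and we obtain $G\subseteq H''\boxtimes P$ with $\tw(H'')$ bounded in terms of $r_0$ and $\omega_0$. The delicate point is that the constant $d$ must depend only on $r_0$ and not on $r$. For $r\ge1$ we fix a tiling with tile diameter about $1$, so $d=O(e^{O(r_0)})$ is still a constant. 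For $r<1$ we scale the tiling to side length about $r$, and the local geometry is near-Euclidean, so the neighborhood counts are uniform.

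\emph{Negative part (super-constant $r$).} I will rely on the construction behind \cref{main:noPS}: a family of HUDGs with radius about $\log n$ and bounded clique number whose row-treewidth is unbounded. I expect its proof to show that a graph with bounded clique number and treewidth much larger than $\operatorname{poly}(t)$ times the radius of a BFS-layering cannot lie in $H\boxtimes P$ with $\tw(H)\le t$. Given any super-constant $r(n)$, set $m=e^{\Theta(r)}$ and take the size-$m$ member of that family, whose natural radius is $\Theta(\log m)=\Theta(r)$. Then pad it with $n-m$ isolated vertices (or a far-away path component) to reach $n$ vertices. Padding keeps the clique number constant and the radius $\Theta(r)$, and it does not decrease row-treewidth. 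Since $r\to\infty$, also $m\to\infty$, so the row-treewidth of the padded graphs is unbounded.

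\emph{Expected obstacle.} The main difficulty is in the positive part: making the covering argument uniform. Concretely, one has to show that the number of tiles met by a ball of radius $2r$ around a tile is bounded by a function of $r_0$ alone, and that $H^{d}$ inherits product structure with constants independent of $n$. I would handle this with hyperbolic area bounds: a ball of radius $R$ has area $2\pi(\cosh R-1)$, and each tile has area bounded below.
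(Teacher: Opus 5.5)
\textbf{Gap in the positive half: small radii.} The statement includes families whose radius tends to $0$ (say $r=1/\sqrt{n}$). You cannot enlarge the radius to a fixed constant there, because the same centers would then induce larger cliques. Your step ``for $r<1$ we scale the tiling to side length about $r$'' does not exist in the hyperbolic plane: there are no similarities, and the tiles of every regular $\{p,q\}$-tiling have size bounded below.

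Falling back on $P\boxtimes P$ as for EUDGs fails as well. A HUDG with tiny radius need not be a EUDG, and its vertices may spread over a region of large hyperbolic diameter. For example, a fine net of a large hyperbolic disk has bounded clique number but exponential growth, so it is not a subgraph of $P\boxtimes P\boxtimes K_c$ for any fixed $c$.

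Local near-Euclideanness only helps once you have a tiling of the \emph{whole} plane at scale $r$ with the following properties: tiles of diameter $O(r)$ and inradius $\Omega(r)$, a dual of degree bounded independently of $r$, and points at distance at most $2r$ lying in tiles at bounded dual distance. Building this is the real work, and your area-counting remark presupposes it. The paper does it in \cref{lem:tiling_small_radius}:
\begin{itemize}
\item each tile of the $\{4,5\}$-tiling is cut into a $k\times k$ grid of quadrilaterals along perpendiculars to the common perpendiculars of opposite sides;
\item Saccheri-quadrilateral estimates keep all side lengths between $s=2d/k$ and $1.5s$, where $d\approx 0.53$ is half the width of a $\{4,5\}$-tile;
\item $k$ is chosen with $2r<s\le 4r$;
\item the small tiles are merged along a concentric partition so that at most three meet at any vertex.
\end{itemize}
Alternatively, a maximal $r/2$-separated net with its Voronoi cells, its planar Delaunay graph, and packing bounds would supply the uniform bounded-degree planar graph your argument needs.

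\textbf{Negative half: super-logarithmic radii.} Padding needs $m=e^{\Theta(r)}\le n$, so it only covers $r\in O(\log n)$. For super-logarithmic $r$ you must keep polygons of side length $2r$ but retain only the innermost $\Theta(\log n)$ levels (and, for bounded clique number, only the levels $2^k$ among them). This is how the paper obtains \cref{cor:lower-bound-super-log-radius}. Otherwise your negative half is the paper's argument: pad with isolated vertices, and bound the row-treewidth from below by the treewidth of the closed neighborhood of the universal root (\cref{lem:tw_neighborhood}).

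\textbf{The graph-power step.} Taken literally, $H^{d}\subseteq H\boxtimes K_c$ for the \emph{same} tiling graph $H$ (with your $d\ge 2$ tile steps) is false. The projection to $H$ would map a ball of radius $R$ into a ball of radius $\lceil R/d\rceil$ with fibers of size at most $c$, contradicting exponential growth. Contracting pieces yields a different planar quotient, and it only helps if the pieces are thick enough that vertices at distance at most $d$ land in equal or adjacent pieces. That is exactly the ring-and-width construction of \cref{sec:tiling_large_radius}.

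Your other option, citing that powers of bounded-degree planar graphs have bounded row-treewidth (Dujmovi\'c, Morin, Wood), is sound. For $r$ bounded away from $0$ it is a legitimate shortcut around the paper's merging of $\{7,3\}$-tiles. The cost is relying on that external theorem and giving up the paper's stronger conclusion $G\subseteq H\boxtimes K_k$ with $H$ the dual of a tiling.
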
%
This confirms the observations of previous results~\cite{Struc_Indep_Hyper_Unifor_Disk_Graph-Blaesius25,blasius_strongly_2023} that HUDGs become similar to the Euclidean case for small radii, whereas they differ significantly for large radii.
As having product structure in some sense restricts the complexity of the neighborhood of an individual vertex, this in particular indicates that HUDGs become more complex (and for some applications more interesting) when allowing the radius to grow with the graph size.
We show the first part of \cref{main:rPS} in \cref{sec:upper_bound} and the second in \cref{sec:lower_bound}.

\subparagraph{Bounds on the Row-Treewidth.}

We actually prove more fine-grained results that go beyond the binary question of having product structure.
Recall that a graph class admits product structure if and only if it has bounded row-treewidth, i.e., a super-constant lower bound and a constant upper bound on the row-treewidth disprove and prove product structure, respectively.
Our lower and upper bounds on the row-treewidth depending on clique number $\omega$ and disk radius $r$ are shown in \cref{tab:bounds-on-rtw}.
The first two rows show our lower bounds that grow with $ r $ (as long as $ r \in O(\log n)$), where we have the two cases $ \omega \in O(1) $ and $ \omega \in O(\log \log n) $.
The first case is most relevant for product structure as growing clique size immediately rejects product structure, whereas the second case gives the stronger lower bound.
Then, the third row gives our upper bound,
whereas the last row is from the literature and is discussed at the end of this section.
Before doing so, we discuss each result in \cref{tab:bounds-on-rtw} in more detail.

\begin{table}[htbp]
    \centering
    \caption{Overview of lower and upper bounds for the row-treewidth.
      The lower bounds mean that for every radius $r$ in the given regime, there exists a family of HUDGs with radius $\Theta(r)$ and the given bounds on the clique number $\omega$ and the row-treewidth.
      The upper bounds hold for all HUDGs with radius $r$ and clique number $\omega$ (both of which may depend on the graph size $n$).
    }
    \label{tab:bounds-on-rtw} \begin{tabular}{c>{\centering\arraybackslash}p{1.8cm}>{\centering\arraybackslash}p{1.8cm}l}
      \toprule
      $\boldsymbol{\omega}$ & \multicolumn{2}{c}{\textbf{row-treewidth}}                              & \textbf{comment} \\
                            & \footnotesize $r \in O(\log n)$  & \footnotesize $r \in \Omega(\log n)$ & \\
      \toprule
      $O(1)$      & $\Omega(\log r)$ & $\Omega(\log\log n)$ & Cor.~\labelcref{cor:lower-bound-sub-log-radius,cor:lower-bound-super-log-radius}; implies Thm.~\labelcref{main:noPS,main:rPS}\\
      $O(\log\log n)$ & $\Omega(r)$      & $\Omega(\log n)$     & Cor.~\labelcref{cor:lower-bound-sub-log-radius,cor:lower-bound-super-log-radius}          \\
      \midrule
      variable & \multicolumn{2}{c}{$O(\omega \cdot 3^{8r})$}             & Thm.~\labelcref{thm:upper_bound}; implies Thm.~\labelcref{main:rPS} for $r, \omega \in O(1)$        \\
      variable & \multicolumn{2}{c}{$O(\omega (1 + \frac{1}{r}) \log n)$} & follows from treewidth bound \cite{Struc_Indep_Hyper_Unifor_Disk_Graph-Blaesius25} \\
      \bottomrule
    \end{tabular} 
\end{table}

The first lower bound in \cref{tab:bounds-on-rtw} assumes constant clique number $\omega \in O(1)$.
Note that there are two regimes for the radius $r$.
For radii smaller than $\log n$, the lower bound grows with increasing radius until it reaches a row-treewidth of $\Omega(\log\log n)$ for radius $r \in \Theta(\log n)$.
For larger radii, the lower bound stops to grow with $ r $.
We note that any super-constant radius already implies super-constant row-treewidth and thus refutes product structure, yielding \cref{main:noPS} and the second part of \cref{main:rPS}.
That is, the specific lower bounds are significantly stronger than \cref{main:noPS} as they refute product structure for every growing radius and not only for the class of all HUDGs.
For the lower bound given in the next row of \cref{tab:bounds-on-rtw}, we allow slightly growing clique number $\omega \in O(\log \log n)$, which yields exponentially larger lower bounds for the row-treewidth compared to the first row.
We remark that all our lower bounds are linear in the size of the largest clique-minor, also known as the Hadwiger number.

Turning to our upper bound (bottom part of \cref{tab:bounds-on-rtw}), observe that it only depends on $\omega$ and $r$.
Assuming both to be constant directly implies the first part of \cref{main:rPS}.
Moreover, for growing clique number $\omega$ (but constant $r$), this is the best upper bound one can hope for, as the clique number is a lower bound on the row-treewidth.
Concerning growing radius, note that the radius $r$ contributes exponentially to the row-treewidth.
While it might be possible to improve this upper bound, we note that our lower bounds for growing radius imply that there has to be a dependence on $r$.
We remark that we strengthen our upper bound in terms of product structure.
For this, recall that the upper bound on the row-treewidth means that every HUDG with radius $ r $ is a subgraph of $ H'' \boxtimes P $ for some graph $ H'' $ of treewidth $ O(\omega \cdot 3^{8r}) $.
We show the stronger statement that for every HUDG $ G $ with radius $r$ and clique number $\omega$, we have $G \subseteq H \boxtimes K_k \subseteq H' \boxtimes P \boxtimes K_{k}$, for some hyperbolic tiling $ H $, some graph $H'$ of treewidth $3$, some path $P$, and $k \in O(\omega \cdot 3^{8r})$.
Our proof is similar to the Euclidean case~\cite{dvorak_notes_2021}, but instead of the Euclidean square-grid, we construct a suitable tiling of the hyperbolic plane.
Interestingly, the tiling depends on $r$ and becomes more hyperbolic for large $ r $ and more Euclidean for small $ r $.
With our tiling, we also contribute to the rich field of constructing hyperbolic tilings with various favorable properties~\cite{Tesselations_Coxeter97,physics_Thurn25,datta_semi_2019}.
\NewDocumentCommand{\tilingLemma}{m O{$ O(3^{8r})$}}{%
    For each #1 there is an (irregular) tiling of the hyperbolic plane such that 
    \begin{itemize}
        \item every tile can be covered by #2 disks of radius $ r $ and
        \item each two points with distance at most $ 2r $ lie in the same tile or in two adjacent tiles.
    \end{itemize}
    \todo[inline]{Moreover, the dual of the tiling admits a layering $ L_1, L_2, \dots $ such that for each $ i $, the subgraph induced by $ L_i $ and $ L_{i + 1} $ has a matching saturating $ L_i $.}
    \todo{this would reduce the treewidth of $ H $ from 3 to 2 \cite[Thm 5]{hickingbotham_product_2024}}
}
\begin{restatable}{theorem}{tiling}\label{lem:tiling}
    \tilingLemma{$ r > 0 $}

\end{restatable}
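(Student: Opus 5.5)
The plan is to build the tiling from polar coordinates around a fixed origin $o$. The plane is cut into concentric rings of radial width $2r$, and each ring is cut into angular sectors. Sectors are nested: every sector of ring $i+1$ lies angularly inside one sector of ring $i$.

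Concretely, let $R_0$ be the disk of radius $2r$ around $o$, kept as a single tile. For $i \ge 1$ let
\[ R_i = \{p : 2ri \le d(o,p) < 2r(i+1)\}. \]
Split $R_i$ into $m_i$ sectors of equal angle $\theta_i = 2\pi/m_i$. Choose $m_1$ so that the inner boundary arc of each sector of $R_1$ has hyperbolic length about $C(r)$. Obtain $m_{i+1}$ from $m_i$ by splitting each sector into an integer number $k \ge 3$ of subsectors, so that the inner arc length $\theta_i \sinh(2ri)$ stays between $C(r)$ and $k\,C(r)$. This is possible because $\sinh(2r(i+1))/\sinh(2ri) \approx e^{2r}$ for $i \ge 1$, so $k \approx e^{2r}$ (at least $3$) keeps the arc lengths in a bounded window.

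\textbf{Covering.} A tile has radial extent $2r$. Its inner arc has length $O(kC(r))$, and its outer arc is longer by a factor $O(e^{2r})$. Every point of the tile lies within distance $2r$ of the outer arc, moving radially. So covering the outer arc by $O(kC(r)e^{2r}/r)$ points, placed at spacing $r$, gives points whose $3r$-balls cover the tile. A hyperbolic ball of radius $3r$ is covered by $e^{O(r)}$ disks of radius $r$, by a packing argument comparing areas $\sim e^{\rho}$. With $C(r) = e^{O(r)}$ and $k = e^{O(r)}$, the total is $e^{O(r)}$. Tracking constants should give $O(3^{8r})$; the exponent $8$ leaves room for crude estimates. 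Small $r$ also needs care, so that counts like $1/r$ stay harmless; one can take $k=3$ fixed whenever $e^{2r} < 3$.

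\textbf{Adjacency.} Let $d(p,q) \le 2r$. Their radii differ by at most $2r$, so they lie in the same ring or in consecutive rings. By hyperbolic trigonometry (the law of cosines in polar coordinates), two points at distance at least $\rho$ from $o$ with $d(p,q) \le 2r$ have angular difference
\[ \delta \le c \sinh(r)/\sinh(\rho) \]
for an absolute constant $c$. Choose $C(r)$ large enough, say $C(r) \ge 2c\sinh(r)\,e^{2r}$, so that $\delta$ is smaller than the angle $\theta_{i+1}$ of a single sector of the outer ring involved. Then:
\begin{itemize}
\item If $p$ and $q$ are in the same ring, their sectors coincide or share a radial side.
\item If $q \in R_i$ lies in sector $S'$ and $p \in R_{i+1}$ lies in subsector $T \subseteq S$, there are two cases. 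If $S' = S$, then $T$ and $S'$ share part of the circle of radius $2r(i+1)$. If $S' \ne S$, then $q$'s angle lies outside $S$ while $p$'s angle is within $\delta < \theta_{i+1}$ of it. So $T$ must be an extreme subsector of $S$, on the side facing $S'$. Hence $S' = S$'s angular neighbour, and $T$ touches $S'$ at least at the corner where the radial boundary of $S$ meets the circle.
\end{itemize}

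\textbf{Main obstacle.} The hard step is this cross-ring adjacency. If adjacency must mean sharing a boundary segment rather than a corner point, I would shift the ring-$(i+1)$ partition by half a subsector so that boundaries interleave. Alternatively, I would enlarge the extreme subsectors so they overlap the neighbouring parent's angular range by at least $\delta$, i.e.\ use a brick-like offset. With that offset, any two tiles that approach within $2r$ across a ring boundary share an arc. The remaining work is the quantitative angular bound $\delta$ and fixing $C(r)$ and $k$ so that all three requirements hold at once: $\delta < \theta_{i+1}$, integrality of $k$, and the covering count $O(3^{8r})$.
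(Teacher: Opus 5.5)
Your polar annular-sector tiling is a genuinely different route from the paper's. The paper merges tiles of a $\{7,3\}$-tiling along BFS rings for $r \ge c$, and subdivides and re-merges a $\{4,5\}$-tiling for small $r$. Your route can be completed, but as written the decisive step is missing. In the paper, adjacent tiles must share an edge, and your nested construction violates the second property exactly in the case $S' \ne S$. There the extreme subsector $T$ meets $S'$ only at the corner where the radial side of $S$ hits the circle of radius $2r(i+1)$, yet points of $T$ and $S'$ near that corner are arbitrarily close. You name a fix but do not carry it out. The half-subsector shift does work once stated precisely:
\begin{itemize}
\item Take $\theta_{i+1} = \theta_i / k_i$ with $k_i$ a positive integer, and place the ring-$(i+1)$ boundaries at angles $\alpha_i + (j + \tfrac{1}{2})\theta_{i+1}$.
\item Then every ring-$(i+1)$ boundary has angular distance at least $\theta_{i+1}/2$ from every ring-$i$ boundary. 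So a ring-$i$ tile and a ring-$(i+1)$ tile that share no arc are angularly separated by at least $\theta_{i+1}/2$.
\item The law of cosines gives $2\sinh\rho_p\sinh\rho_q\sin^2(\delta/2) = \cosh d(p,q) - \cosh(\rho_p - \rho_q) \le 2\sinh^2 r$, hence $\delta \le \pi\sinh r/\sinh(2ri)$.
\item Since $\sinh(2r(i+1))/\sinh(2ri) \le 2\cosh(2r)$ for $i \ge 1$, requiring $C(r) > 4\pi\sinh(r)\cosh(2r)$ forces $\delta < \theta_{i+1}/2$. Close cross-ring pairs then always lie in tiles sharing an arc.
\end{itemize}

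Your concrete parameter choices also fail as stated. Fixing $k = 3$ when $e^{2r} < 3$ is wrong. The growth ratio $\sinh(2r(i+1))/\sinh(2ri)$ tends to $e^{2r} < 3$, so the inner arc lengths $\theta_i\sinh(2ri)$ decay geometrically. The lower bound $C(r)$, on which all your adjacency arguments rest, then fails after finitely many rings, and points at distance below $2r$ end up arbitrarily many sectors apart. Nothing in your argument needs $k \ge 3$. Allow $k_i \in \{1,2\}$ too, e.g.\ $k_i = \lfloor \theta_i\sinh(2r(i+1))/C(r) \rfloor$, which keeps every arc length in $[C(r), 2C(r))$.

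The covering estimate also misses the stated bound with the window $[C, kC]$ you state. With $k \approx e^{2r}$ and $C \approx \pi e^{3r}$, your scheme places about $e^{7r}/r$ points on the outer arc. A ball of radius $3r$ needs at least about $e^{2r}$ disks of radius $r$, so your count is of order $e^{9r}/r$. This is not $O(3^{8r})$, since $e^9 > 3^8$. With the window $[C, 2C)$ the same scheme gives $O(e^{8r})$, and a packing bound on the $r/2$-neighbourhood of a tile gives $O(e^{5r})$.

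With these repairs you get a single construction for all $r > 0$. It avoids the paper's case split, the graph-distance lemmas in the $\{3,7\}$-dual, and the triple merging, and it even improves the base of the exponent.
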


The last upper bound in \cref{tab:bounds-on-rtw} directly follows from the literature, which we discuss in the next paragraph.

\subparagraph{Connections to the State of the Art.}

Our lower bounds are based on the fact that the row-treewidth is asymptotically lower-bounded by the treewidth in the neighborhood of a vertex~\cite{dujmovic_layered_2017}.
Notably, the neighborhoods of HUDGs are called \emph{strongly hyperbolic uniform disk graphs (SHUDGs)} and are an interesting graph class in their own right~\cite{blasius_strongly_2023}.
In fact, the much studied hyperbolic random graphs are SHUDGs with random vertex positions~\cite{krioukov_hyperbolic_2010}.
The core of our lower bounds for the row-treewidth of HUDGs are the following bounds on the treewidth of SHUDGs.
\begin{restatable}{theorem}{mainTreewidthApp}\label{main:treewidth}
    There are families of $ n $-vertex strongly hyperbolic uniform disk graphs with radius $\Theta(\log n)$ and
    \begin{itemize}
        \item clique number $ O(\log \log n) $ and treewidth $ \Omega(\log n) $
        \item clique number $ O(1) $ and treewidth $ \Omega(\log \log n) $.
    \end{itemize}
    
\end{restatable}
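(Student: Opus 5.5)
The plan is to lower-bound treewidth by the Hadwiger number, using $\tw(G) \ge h(G)-1$ where $h(G)$ is the largest $t$ with a $K_t$-minor. So for each bullet I will construct an explicit SHUDG containing a $K_k$-minor whose branch sets are paths, while keeping the clique number small. I will work in polar coordinates with disk radius $R = \Theta(\log n)$ and all points in the disk of radius $R$ around the origin. The only geometric fact I need is the standard approximation: two points at radii $r_1, r_2$ with $r_1 + r_2 \ge R$ are adjacent iff their angular distance is at most roughly $2e^{(R-r_1-r_2)/2}$, up to constant factors. This lets me reason purely combinatorially about \enquote{layers} of radii and angular intervals.

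\textbf{Construction.} I use a hierarchical (binary-tree-like) layout. Consider layers $\ell = 0,1,\dots,L$ at radii $\rho_\ell = R - c\ell$. The angular adjacency window at layer $\ell$ (with itself or a neighbouring layer) then grows by a constant factor $e^{c}$ per level. Each branch set $B_i$, $i \in [k]$, will be a path that starts near the boundary at angle $\theta_i$ and climbs inward layer by layer. Its angle drifts so that at layer $\ell$ the paths $B_i$ whose indices share a common prefix of length $\log k - \ell$ are routed through a common angular sector. Within that sector the paths are placed as parallel \enquote{lanes}, spaced just beyond the adjacency window so they are non-adjacent. At a prescribed place per pair $\{i,j\}$ (determined by the highest bit in which $i$ and $j$ differ), a lane of $B_i$ is bent to touch a lane of $B_j$, creating the edge between the two branch sets. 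Every pair meets exactly once, at the level of their lowest common ancestor. Because the angular window grows geometrically with depth, $L = O(\log k)$ levels with $k$ lanes per sector fit, and the total number of vertices is about $k^2 \cdot e^{O(L)}$ times the path lengths. Choosing $k = \Theta(\log n)$ should give $n$ polynomial in the required scale, so that $R = \Theta(\log n)$ suffices.

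\textbf{Clique number.} This is the step I expect to be the main obstacle. A clique must lie within a region of bounded hyperbolic diameter, so it intersects only $O(1)$ consecutive layers and a single adjacency window. Within one layer, lanes are spaced beyond the window, so a clique contains $O(1)$ points per lane. The danger is the meeting regions, where many lanes converge near the centre and vertices at small radius see a wide angle. I would first try to stagger the meeting points of different pairs along the radial direction (pair $\{i,j\}$ meets at a slightly different sub-radius). Then at any given location only $O(\log k)$ lanes are simultaneously within reach, giving $\omega = O(\log k) = O(\log\log n)$ for the first bullet. Making this rigorous means checking, with the adjacency approximation, that inner-layer vertices do not see too many outer lanes. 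This likely forces the radial spacing $c$ and lane spacing to be tuned so each vertex is adjacent to $O(1)$ vertices per layer below it.

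\textbf{Second bullet.} For the second bullet I would run the same construction with much sparser lanes: at most $O(1)$ lanes per sector per layer, and pairs meeting one at a time in dedicated angular slots. This keeps $\omega = O(1)$. The price is that the number of levels needed becomes exponential in $k$ (each level resolves only $O(1)$ lanes, so separation must be doubled $2^{O(k)}$ times). Hence $\log n = 2^{O(k)}$, i.e.\ $k = \Theta(\log\log n)$, and the Hadwiger bound gives treewidth $\Omega(\log\log n)$. Finally, I verify that all points lie within distance $R$ of the origin, so the graph is a SHUDG, and that $R = \Theta(\log n)$ in both cases.
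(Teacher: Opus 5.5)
\textbf{The plan has a genuine gap: the construction is never specified, the clique bound is missing, and the parameters for the first bullet cannot work.} Your lower-bound mechanism (a $K_k$-minor gives $\mathrm{tw}\ge k-1$) is the one the paper uses. The problem is the clique bound, which you yourself flag as the main obstacle: the intuition you plan to use for it is false in SHUDGs.

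\begin{itemize}
\item The adjacency threshold is $R=\Theta(\log n)$, not a constant. Two points at the same angle are at distance $|\rho_1-\rho_2|\le R$ and hence adjacent, so a purely radial offset never separates.
\item So a clique is \emph{not} confined to $O(1)$ consecutive layers. Your whole layered region has radial width $cL=O(\log k)\ll R$, and vertices at nearly equal angles on all layers are pairwise adjacent.
\item For the same reason, staggering the meeting regions radially enlarges cliques rather than shrinking them. Suppose $m$ bent runs cover a common angular range at radii within $O(1)$ of each other. Their adjacency windows then agree up to a constant factor. Hence some angular cell of width a small constant times the window contains vertices of $\Omega(m)$ runs, and these form a clique. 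Only angular separation helps, and only between radii whose windows differ by constant factors.
\end{itemize}

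\textbf{No routing can make the first bullet work with your parameters.} Take $L=O(\log k)$ layers of constant spacing $c$, and the angular extent of $\mathrm{poly}(k)$ innermost windows that your sector scheme gives (the top sector holds $k$ lanes spaced about one window apart). The occupied part of band $\ell$ is covered by $O(k e^{c(L-\ell)})$ angular cells, each of which is a clique. So the whole layout is covered by $\mathrm{poly}(k)$ cliques and has $n_0=\mathrm{poly}(k)$ vertices if $\omega=O(\log k)$. But HUDGs with disk radius $\Omega(1)$ have treewidth $O(\omega\log n_0)$, by the separator bound quoted in the introduction. That would give $k-1\le O(\log^2 k)$, a contradiction.

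\textbf{What the paper does instead.} It makes the branch sets horizontal and spreads them over the whole radial range. With disk radius $r$, level $m\in\{2,\dots,r\}$ is a regular $2^m$-gon of side $2r$ around the root, i.e.\ a cycle. Every vertex of level $i$ has at least two neighbours on every level $j\ge i$. Contracting the levels therefore yields a clique minor of order $r=\Theta(\log n)$ for free.

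The clique bound is then pure counting:
\begin{itemize}
\item A level-$i$ vertex has at most $4\sqrt{2^{j-i}}$ neighbours on level $j$.
\item For $h>j$, at least $2^{h-j}/2$ level-$h$ vertices lie angularly between two vertices on levels $i<j$.
\item Hence every triangle on levels $i\le j\le h$ satisfies $h-j\le j-i+6$. This forces the levels of a clique to have exponentially shrinking gaps, so a clique uses only $O(\log r)$ levels.
\item Keeping only the levels that are powers of two leaves no triangle on three distinct levels. This gives $\omega\le 5$ with $\Theta(\log\log n)$ levels.
\end{itemize}

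Your second-bullet heuristic (exponentially many levels in $k$) points in this direction, but as stated it is not an argument.
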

To connect this to the results by Bläsius, von der Heydt, Kisfaludi-Bak, Wilhelm, and Van Wordragen~\cite{Struc_Indep_Hyper_Unifor_Disk_Graph-Blaesius25}, recall that they show that $ n $-vertex HUDGs with radius $ r $ have balanced separators that can be covered with $ (1 + 1/r) \log n $ cliques.
This implies that the treewidth is $ O(\omega (1 + 1/r) \log n) $ \cite{Treew_Graph_Balan_Separ-DvoraNorin19}, which is an upper bound for the row-treewidth (which is why we list it in \cref{tab:bounds-on-rtw}).
For $r$ at least constant, this gives separators coverable with $O(\log n)$ cliques, i.e., treewidth $ O(\omega \log n) $. 
The authors of \cite{Struc_Indep_Hyper_Unifor_Disk_Graph-Blaesius25} raise the question whether this can be improved, in particular for increasing radius $r$.
With \cref{main:treewidth}, we give a negative answer to this, up to a $\log\log n$ factor: The family with clique number $\omega \in O(\log\log n)$ has treewidth $\Omega(\log n) = \Omega(\omega \log n / \log\log n)$.

In addition, \cref{main:treewidth} provides the first known example of a family of SHUDGs with constant clique number but unbounded treewidth.
Graph classes with this property are also called \emph{not $ (\tw, \omega) $-bounded}.
By Dallard, Milanič, and Štorgel~\cite{dallard_tree-alpha_2024}, graph classes with bounded tree-independence number are $ (\tw, \omega) $-bounded, so our constructed family of SHUDGs has unbounded tree-independence number.
Since SHUDGs are neighborhoods of vertices in HUDGs, it follows that the local tree-independence number of HUDGs is unbounded, which in turn implies that their layered tree-independence number is unbounded~\cite{galby_local-tree-alpha_2024}.
This gives a negative answer to an open question of Dallard, Milanič, Munaro, and Yang~\cite{dallard_layered-tree-alpha_2025}.

\begin{corollary}\label{cor:tree-alpha}
    The local tree-independence number and the layered tree-independence number of hyperbolic uniform disk graphs are unbounded.
\end{corollary}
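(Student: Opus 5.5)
The corollary follows by chaining \cref{main:treewidth} with known facts about tree-independence number, much as the preceding discussion suggests. Recall the definitions. The \emph{local tree-independence number} of a class $\calG$ is bounded if there is a function $f$ such that every $G \in \calG$, every vertex $v$ and every radius $d$ satisfy that the subgraph induced by the ball of radius $d$ around $v$ has tree-independence number at most $f(d)$. It therefore suffices to exhibit HUDGs in which already the closed neighbourhood ($d = 1$) of a single vertex has unbounded tree-independence number.

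First, take the second family of \cref{main:treewidth}: $n$-vertex SHUDGs with radius $\Theta(\log n)$, clique number $O(1)$ and treewidth $\Omega(\log\log n)$. By the result of Dallard, Milanič, and Štorgel~\cite{dallard_tree-alpha_2024}, a graph with tree-independence number $k$ and clique number $\omega$ has treewidth bounded by a function of $k$ and $\omega$; concretely, every bag is covered by $k$ independent-set-free pieces, and a bag with independence number at most $k$ and clique number at most $\omega$ has size less than $R(k+1,\omega+1)$. Since the clique number is constant and the treewidth is unbounded, the tree-independence number of this family must be unbounded.

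Second, I use that SHUDGs are exactly the neighbourhoods of vertices in HUDGs~\cite{blasius_strongly_2023}. For each graph $S$ in the family, choose a HUDG $G$ and a vertex $v$ with $S$ equal to (or an induced subgraph of) $G[N(v)]$. Concretely, $v$ sits at the centre of the disk of radius $2r$ containing the points. Tree-independence number is monotone under induced subgraphs, so the ball of radius $1$ around $v$ in $G$ has tree-independence number at least that of $S$. This quantity is unbounded, so the local tree-independence number of HUDGs is unbounded.

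Finally, bounded layered tree-independence number implies bounded local tree-independence number~\cite{galby_local-tree-alpha_2024}. The reason is that a ball of radius $d$ meets at most $2d+1$ consecutive layers of any layering, so the bags restricted to the ball have independence number $O(d\cdot k)$. Hence the layered tree-independence number is unbounded as well.

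The only step needing care is the second one: making sure the SHUDG construction of \cref{main:treewidth} embeds as an induced neighbourhood of a HUDG vertex with the same radius. This is exactly the definition of SHUDGs from~\cite{blasius_strongly_2023}, so I expect no real obstacle.
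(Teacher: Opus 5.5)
Your proof is correct and follows the same chain as the paper. Constant clique number with unbounded treewidth (\cref{main:treewidth}) forces unbounded tree-independence number via the $(\tw,\omega)$-boundedness result of Dallard, Milani\v{c}, and \v{S}torgel; the SHUDG is the closed neighbourhood of the root in a HUDG, which gives unbounded local tree-independence number; and bounded layered implies bounded local, so the layered version is unbounded too. The only blemish is the phrase about bags being ``covered by $k$ independent-set-free pieces'', which is meaningless, but the Ramsey bound $|B| < R(k+1,\omega+1)$ you state right after is a complete justification on its own.
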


Note that for these two answered open questions, the first comes from the HUDG side and the other is more related to product structure.
This underscores that HUDGs and product structure form an interesting combination.

\todo{I really dislike to have the last sentence at this position. Any better suggestions?}
Also note that \cref{cor:tree-alpha} shows another structural difference between HUDGs with super-constant radius and those with constant radius, as the layered tree-independence number is bounded for the latter~\cite{dallard_layered-tree-alpha_2025}.



\section{Preliminaries}\label{sec:preliminaries}

\subsection{Basic Graph Notation}

Let $G = (V, E)$ be a \emph{graph} with \emph{vertex} set $V$ and \emph{edge} set $E \subseteq \binom{V}{2}$.
Unless mentioned otherwise, we consider graphs to be \emph{simple} without \emph{multi-edges} and \emph{self-loops}, i.e., $E$ contains each edge only once and each edge contains two different vertices.
In case we need to distinguish between different graphs, we denote the vertices and edges of $G$ with $V(G)$ and $E(G)$, respectively.
For $uv \in E$, we say $u$ and $v$ are \emph{adjacent} to each other and \emph{incident} to the edge $uv$.
We also call $u$ and $v$ the \emph{endpoints} of $uv$.

Two vertices $u, v \in V$ are \emph{reachable} if there is a sequence $u = v_0, \dots, v_k = v$ of vertices such that consecutive vertices are adjacent, i.e., $v_{i - 1}v_i \in E$ for $i \in [k]$.
The minimum $k$ for which such a sequence exists is the \emph{distance} between $u$ and $v$ in $G$.
Reachability is an equivalence relation between vertices and the equivalence classes are the \emph{connected components}.
A graph is \emph{connected} if it has only one connected component.


The \emph{neighborhood} $N(v)$ of a vertex $v \in V$ is the set of vertices adjacent to $v$, i.e., $N(v) = \{u \in V \mid uv \in E\}$.
The \emph{closed neighborhood} $N[v] = N(v) \cup \{v\}$ additionally contains $v$ itself.
The \emph{degree} of $v$ is the size $|N(v)|$ of its neighborhood.
A vertex is \emph{isolated} if it has degree~$0$.  

\subparagraph{Graph Operations.}

We denote the graph obtained from $G = (V, E)$ by deleting a single vertex $v \in V$ or a subset of vertices $S \subseteq V$ with $G - v$ and $G - S$, respectively.
For a vertex subset $S \subseteq V$, \emph{contracting} $S$ is the operation of replacing all vertices in $S$ and their occurrences in edges by just a single vertex; removing multi-edges and self-loops afterwards.
A graph $H$ is a \emph{subgraph} of $G$ if $V(H) \subseteq V(G)$ and $E(H) \subseteq E(G)$, i.e., $H$ can be obtained from $G$ by deleting edges and vertices.
It is an \emph{induced} subgraph if it can be obtained by just vertex deletions.
In this case, $H$ is completely determined by $G$ and $V(H)$ and we denote the induced subgraph with $G[V(H)]$.
The graph $H$ is a \emph{minor} of $G$ if $H$ can be obtained from $G$ by taking a subgraph and contracting edges.
For a partition $\mathcal P$ of the vertices $V$, the \emph{quotient} $G / \mathcal P$ is the graph obtained from $G$ by contracting each block of $\mathcal P$.

\subparagraph{Specific Graphs.}

The \emph{complete graph} $K_n$ with $n = |V|$ vertices is the graph where every pair of vertices is adjacent, i.e., $E = \binom{V}{2}$.
A complete graph is also called \emph{clique} (usually, when it appears as subgraph in a different graph).
The \emph{clique number} $\omega$ of a graph $G$ is the largest clique that is a subgraph of $G$.

A \emph{path} is a connected graph in which every vertex has degree~$2$ except for two vertices with degree~$1$.
The two degree-$1$ vertices are also called the \emph{endpoints} of the path.
We usually denote paths with $P$.
A \emph{cycle} is a connected graph in which every vertex has degree~$2$.
A cycle with $3$ vertices is also called \emph{triangle}.
A graph containing no cycle as subgraph is called \emph{acyclic}.
A \emph{tree} is a graph that is acyclic and connected.
A \emph{spanning tree} of a graph $G$ is a subgraph of $G$ that is a tree on all vertices of $G$.
A \emph{rooted tree} is a tree together with a designated vertex called \emph{root}.
Let $T = (V, E)$ be a tree with root $v \in V$.
For every non-root vertex $u \in V$, $T$ contains a unique path from $u$ to the root $v$.
The neighbor of $u$ on this path is the \emph{parent} of $u$.
Moreover, $u$ is a \emph{child} of its parent.
The rooted tree $T$ is \emph{binary} if every vertex has two children.
The \emph{ancestor} and \emph{descendant} relations are the transitive closures of the parent and child relations, respectively.

\subsection{Product Structure}

\subparagraph{Separators and Treewidth.}

Let $G = (V, E)$ be a connected graph with $n = |V|$ vertices.
A vertex set $S \subseteq V$ is a \emph{separator} if deleting $S$ disconnects $G$.
The separator is \emph{balanced} if each connected component in $G - S$ has at most $2/3 \cdot n$ vertices.
We note that every tree has a balanced separator of size $|S| = 1$.
The \emph{treewidth}\footnote{For this paper, we do not require a formal definition of treewidth via tree decompositions. See \cite[Section~7]{cygan_treewidth_2015} for a thorough definition.} $\tw(G)$ of a graph $G$ is a parameter that measures how similar $G$ is to a tree with respect to balanced separators.
Essentially, $\tw(G)$ is the size of such a balanced separator and trees have treewidth~$1$.
This can be made more precise as follows.
Dvořák and Norin~\cite{Treew_Graph_Balan_Separ-DvoraNorin19} showed that if every subgraph of $G$ has a balanced separator of size at most $s$, then $\tw(G) \le 15s$.
Conversely, it is well known that $G$ has a balanced separator of size at most $\tw(G)$~\cite[Lemma~7.20]{cygan_treewidth_2015}.
The latter also holds for every subgraph of $G$ as the treewidth \emph{minor closed}, i.e., if $H$ is a minor of $G$ then $\tw(H) \le \tw(G)$.
Thus, if $s$ is the smallest number such that every subgraph of $G$ has a balanced separator of size $s$, then the treewidth of $G$ is $s$, up constant factors.
The complete graph $K_n$ on $n$ vertices has treewidth $\tw(K_n) = n - 1$.
Thus, if $G$ has a clique-minor of size $k$, then $\tw(G) \ge k - 1$.

\subparagraph{Strong Product.}

For two graphs $G$ and $H$, the \emph{strong product} $G \boxtimes H$ is defined as follows; also see Figure~\ref{fig:prelim-prod-scruct}.
$ G \boxtimes H $ has vertex set $V(G \boxtimes H) = V(G) \times V(H)$, i.e., every vertex $u = (u_G, u_H)$ of $G \boxtimes H$ corresponds to a vertex $u_G$ in $G$ and to a vertex $u_H$ in $H$.
Two vertices $u = (u_G, u_H)$ and $v = (v_G, v_H)$ are adjacent in $G \boxtimes H$ if one of the following is true.
\begin{itemize}
  \item $u$ and $v$ are the same vertex in $G$ and adjacent in $H$, i.e., $u_G = v_G$ and $u_Hv_H \in E(H)$.
  \item $u$ and $v$ are the same vertex in $H$ and adjacent in $G$, i.e., $u_H = v_H$ and $u_Gv_G \in E(G)$.
  \item $u$ and $v$ are adjacent in $G$ and $H$, i.e., $u_Gv_G \in E(G)$ and $u_Hv_H \in E(H)$.
\end{itemize}
\begin{figure}
    \centering
    \includegraphics{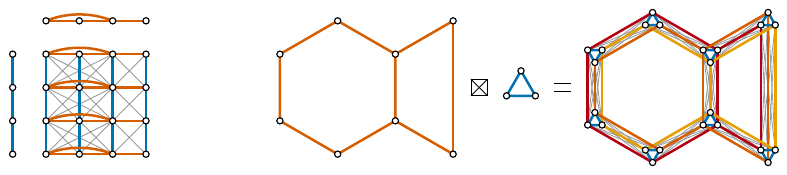}
    \caption{Left: Strong product of a graph $H$ (red) with a path $P$ (blue).
      The three different types of edges are color coded in red (edge in $H$, same vertex in $P$), blue (edge in $P$, same vertex in $H$), and gray (edge in $H$ and $P$).
      Right: Strong product of a graph (red) with a $K_3$ (blue).}
    \label{fig:prelim-prod-scruct}
\end{figure}

\subparagraph{Row-Treewidth and Product Structure.}

We are usually interested in products of the form $H \boxtimes P$ where $H$ has small treewidth and $P$ is a path (of arbitrary length).
Assume $G$ is a subgraph of such a product $G \subseteq H \boxtimes P$ and assume that $H$ is chosen such that $\tw(H)$ is minimum with this property.
Then, we call $\tw(H)$ the \emph{row-treewidth} of $G$.
We say that a graph family $\calG$ has \emph{product structure} if the graphs in $\calG$ have row-treewidth $O(1)$, i.e., there exists a constant that upper-bounds the row-treewidth of every graph $G \in \calG$.

For graph products of the form $H \boxtimes P$, it is often helpful to think of $H \boxtimes P$ as a chain of copies of $H$.
More precisely, if $P$ has $k$ vertices, then $H \boxtimes P$ contains $k$ copies $H_1, \dots, H_k$ of $H$.
Note that with this perspective, a copy $v_i \in V(H_i)$ of $v \in V(H)$ is adjacent to its neighboring copies $v_{i - 1} \in V(H_{i - 1})$ and $v_{i + 1} \in V(H_{i + 1})$, as well as to the copies of $v$'s neighbors in $H_{i - 1}$, $H_{i}$, and $H_{i + 1}$.
Similarly, when considering the product $G = H \boxtimes K_k$, then $G$ can be obtained from $H$ by replacing each vertex in $H$ with a $k$-clique and replacing each edge in $H$ with a complete bipartite graph between the corresponding $k$-cliques.
The treewidth of $H \boxtimes K_k$ is $ (\tw(H) + 1) \cdot k - 1$.

\subsection{Hyperbolic Geometry and Tilings}

While this paper is about graphs in the hyperbolic plane, most of it can be understood with only little knowledge of hyperbolic geometry.
We thus only provide a brief introduction.
Formally, hyperbolic geometry is obtained from Euclidean geometry by replacing the parallel axiom: Instead of having exactly one parallel to a line through some given point not on the line, there are infinitely many, where two lines are considered parallel if they do not intersect.
Thus, many concepts familiar from Euclidean geometry still apply as long as the parallel axiom is not involved, e.g., definitions of distances, angles, disks, (regular) polygons, and congruency are unchanged and the triangle inequality holds.
However, some geometric laws like trigonometric formulas work differently,
which we only introduce when needed.
They can be found in textbooks on hyperbolic geometry, e.g., \cite{Euclid_Non_Euclid_Geomet-Green93}.

\subparagraph{Polar Coordinates.}

We often identify points in the hyperbolic plane using polar coordinates.
For an arbitrarily chosen origin $O$ and reference ray starting in $O$, a point $(\rad, \varphi)$ is defined by its distance $\rad$ to $O$ called \emph{radius} and its \emph{angle} $\varphi$ to the reference ray.
The hyperbolic distance between two points $p_1 = (\rad_1, \varphi_1)$ and $p_2 = (\rad_2, \varphi_2)$ is determined by the two radii $\rad_1$, $\rad_2$, and the \emph{angular distance} $\min\{|\phi_1 - \phi_2|, 2 \pi - |\phi_1 - \phi_2|\}$.
For a fixed hyperbolic distance $d$ and given radii $\rad_1$ and $\rad_2$ for the points $p_1$ and $p_2$, respectively, we denote with $\theta_d(\rad_1, \rad_2)$ the angular distance between $p_1$ and $p_2$ such that they have distance $d$.
While we do not give a formula for the distance between points given in polar coordinates (see, e.g., \cite{blasius_strongly_2023}), we need a formula for $\theta_d(r_1, r_2)$ , which is
\begin{equation}
    \label{eq:max_angular_distance}
    \theta_d(r_1,r_2) = \arccos\left(\frac{\cosh(r_1)\cosh(r_2)-\cosh(d)}{\sinh(r_1)\sinh(r_2)}\right),
\end{equation}
with $\sinh(x) = (e^x - e^{-x}) / 2$ and $\cosh(x) = (e^x + e^{-x}) / 2$.

\subparagraph{Basic Properties.}

Our illustrations sometimes use the Poincaré disk model, which maps the hyperbolic plane into a unit disk of the Euclidean plane.
However, we do not need any knowledge about this model and the illustrations should be helpful regardless.
The core property of the hyperbolic plane relevant for this paper is the following.
The hyperbolic plane expands exponentially, while at the same time behaving like the Euclidean plane locally.
To make this more concrete, consider a disk of radius $r$.
If the radius $r$ is small, then the area and circumference of the disk are quadratic and linear in $r$, respectively, as we are used to in the Euclidean plane.
However, if the radius $r$ increases, the area and circumference both grow exponentially like $\Theta(e^r)$.
This has two interesting effects.
First, compared to the Euclidean plane, the hyperbolic plane has more space.
This effect becomes more pronounced at bigger scales, i.e., when we look at a large region.
Secondly, in Euclidean geometry one can scale objects to change their size while keeping them structurally the same, e.g., think of similar triangles.
In the hyperbolic plane, however, the scale makes a big structural difference, and there is no scaling operation that changes all distances by the same factor.

\begin{figure}
    \centering
    \includegraphics{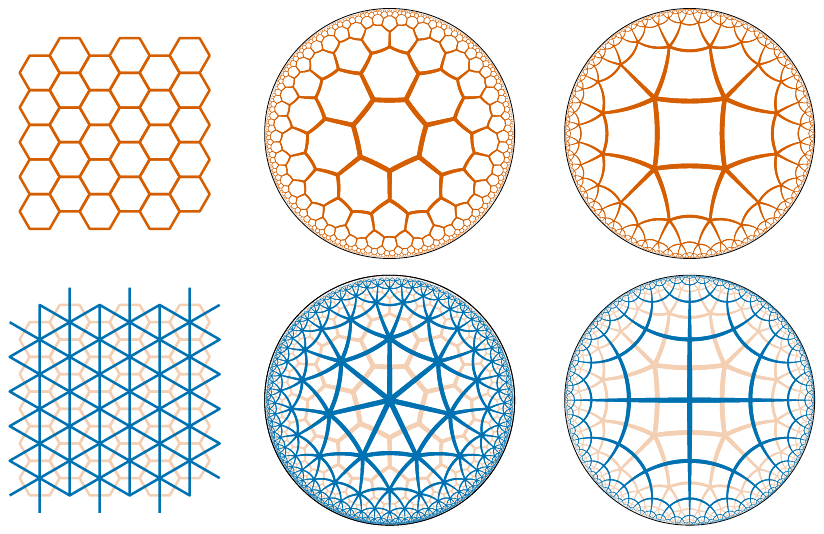}
    \caption{Regular tilings (top) together with their duals (bottom).
      From left to right: A Euclidean $\{6, 3\}$ tiling, a hyperbolic $\{7, 3\}$ tiling, and a hyperbolic $\{4, 5\}$-tiling.
      The hyperbolic tilings are shown in the Poincaré disk.
      While the tiles seem to get smaller towards the boundary of the disk, they are all congruent.}
    \label{fig:prelim-tiling}
\end{figure}

\subparagraph{Hyperbolic Tilings.}

A \emph{regular tiling} is the partition of the plane into congruent regular polygons; also see \cref{fig:prelim-tiling}.
The Euclidean plane can be tiled with triangles, squares, and hexagons.
These tilings have the Schläfli symbols $\{3, 6\}$, $\{4, 4\}$, and $\{6, 3\}$, respectively, where $\{p, q\}$ indicates that each tile is a $p$-gon and $q$ such $p$-gons meet at each corner.
For any $p$ and $q$ with $\frac{1}{p} + \frac{1}{q} < \frac{1}{2}$, we obtain a regular tiling of the hyperbolic plane; see \cref{fig:prelim-tiling} for examples.
While Euclidean tilings can be scaled arbitrarily, the size of the tiles in a hyperbolic $\{p, q\}$-tiling is determined by $p$ and $q$.
The following lemma gives formulas for the size of tiles and directly follows from basic trigonometry.
\begin{lemma}
    \label{lem:distances-in-tilings}
    Consider a tile of a hyperbolic $\{p, q\}$-tiling and let $\ell$ be its side length and $r_1$ and $r_2$ be the radius of the incircle and circumcircle, respectively.
    Then the following holds.
    \begin{equation*}
        \cosh \frac{\ell}{2} = \frac{\cos \frac{\pi}{p}}{\sin \frac{\pi}{q}}
        \hfill
        \cosh r_1 = \frac{\cos \frac{\pi}{q}}{\sin \frac{\pi}{p}}
        \hfill
        \cosh r_2 = \cot \frac{\pi}{p} \cdot \cot \frac{\pi}{q}
        \hfill
    \end{equation*}
    For the $\{4, 5\}$-tiling, we get $\ell / 2 \approx 0.62$ and $r_1 \approx 0.53$.
    For the $\{7, 3\}$-tiling, we get $r_2 \approx 0.62$.
\end{lemma}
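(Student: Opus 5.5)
We need to prove the statement immediately above, which is Lemma~\ref{lem:distances-in-tilings}: the formulas for the side length, inradius and circumradius of a tile in a hyperbolic $\{p, q\}$-tiling, together with the numerical values.

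The plan is to reduce everything to one right triangle. In a tile of a regular $\{p,q\}$-tiling, let $C$ be the center of the tile, $M$ the midpoint of one side, and $V$ one endpoint of that side. The triangle $CMV$ then has:
\begin{itemize}
    \item a right angle at $M$, since the segment from the center to the midpoint of a side is perpendicular to it by symmetry;
    \item angle $\pi/p$ at $C$, since the $2p$ such triangles partition the tile around $C$;
    \item angle $\pi/q$ at $V$, since $q$ tiles meet at $V$, each with interior angle $2\pi/q$, and $CV$ bisects that angle.
\end{itemize}
Its sides are $CM = r_1$, $CV = r_2$, and $MV = \ell/2$.

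We then apply the standard relations for a hyperbolic right triangle with legs $a,b$, hypotenuse $c$, and opposite angles $\alpha,\beta$:
\begin{equation*}
    \cosh c = \cot\alpha\cot\beta, \qquad \cosh a = \frac{\cos\alpha}{\sin\beta}.
\end{equation*}
Here $\alpha$ is the angle opposite leg $a$, so $\beta$ is adjacent to $a$.
\begin{itemize}
    \item With hypotenuse $c = r_2$ we get $\cosh r_2 = \cot\frac{\pi}{p}\cot\frac{\pi}{q}$.
    \item The leg $\ell/2$ is opposite $C$ (angle $\pi/p$) and adjacent to $V$ (angle $\pi/q$), giving $\cosh\frac{\ell}{2} = \cos\frac{\pi}{p}/\sin\frac{\pi}{q}$.
    \item The leg $r_1$ is opposite $V$ and adjacent to $C$, giving $\cosh r_1 = \cos\frac{\pi}{q}/\sin\frac{\pi}{p}$.
\end{itemize}
If one wants these identities derived rather than cited, they follow from the hyperbolic law of cosines for angles, $\cos\gamma = -\cos\alpha\cos\beta + \sin\alpha\sin\beta\cosh c$. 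Setting $\gamma=\pi/2$ gives the first identity. Using the law at the other vertices, together with $\cosh c = \cosh a\cosh b$, gives the second.

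For the numerical values:
\begin{itemize}
    \item $\{4,5\}$: $\cosh\frac{\ell}{2} = \cos(\pi/4)/\sin(\pi/5) \approx 0.7071/0.5878 \approx 1.203$, so $\ell/2 \approx \operatorname{arccosh}(1.203) \approx 0.62$.
    \item $\{4,5\}$: $\cosh r_1 = \cos(\pi/5)/\sin(\pi/4) \approx 0.809/0.7071 \approx 1.144$, so $r_1 \approx 0.53$.
    \item $\{7,3\}$: $\cosh r_2 = \cot(\pi/7)\cot(\pi/3) \approx 2.077\cdot 0.5774 \approx 1.199$, so $r_2 \approx 0.62$.
\end{itemize}

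The only step needing care is the angle bookkeeping: justifying the $2\pi/q$ interior angle and matching each leg with its opposite angle. The rest is direct citation of the right-triangle identities, for instance from~\cite{Euclid_Non_Euclid_Geomet-Green93}, plus arithmetic.
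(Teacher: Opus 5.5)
Your proof is correct and is the ``basic trigonometry'' the paper appeals to; the paper gives no further argument. You use the characteristic right triangle with angles $\pi/p$, $\pi/q$, $\pi/2$ and sides $r_1$, $\ell/2$, $r_2$, together with the standard right-triangle identities, and your angle bookkeeping is right. One numerical quibble: for $\{4,5\}$ the value $\ell/2$ is about $0.627$, so ``$\approx 0.62$'' is a truncation rather than a rounding, which is harmless since the paper later only uses $\ell/2 < 0.63$.
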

Also observe in \cref{fig:prelim-tiling} that the exponential expansion of the hyperbolic plane is reflected in the tilings, which can be seen as a discretization of the plane.
We also consider (irregular) tilings, where the tiles do not need to be congruent.

We usually interpret tilings (regular and irregular) as infinite\footnote{Throughout the paper, we implicitly assume that all considered graphs are finite.
  Tilings are the only exception.
  But also for tilings, we are usually only interested in a finite subgraph.} \emph{plane graphs}, i.e., graphs that are drawn without edge crossings.
Then the corners are the vertices, the sides are the edges, and the $p$-gons are the \emph{faces}.
The \emph{dual} of such a plane graph has one vertex per face and two faces are adjacent if they share an edge.
Again, see \cref{fig:prelim-tiling} for an illustration and note that the $\{p, q\}$-tiling is dual to the $\{q, p\}$-tiling.

\subsection{Geometric Graphs}

A graph $G$ is a \emph{(Euclidean) unit disk graph (EUDG)} if it is the intersection graph of disks of radius $1$ in the Euclidean plane, i.e., each vertex is represented by a disk and two vertices are adjacent if and only if the corresponding disks intersect.
We also interpret the disk centers as the \emph{positions} of the corresponding vertices and use terms like \emph{distance}, i.e., two vertices are connected if and only if their distance is at most $2$.
Analogously, one can define intersection graphs of equally sized disks in the hyperbolic plane.
We say that $G$ is a \emph{hyperbolic uniform disk graph (HUDG)} if there exists a radius $r$ such that $G$ is the intersection graph of hyperbolic disks of radius $r$.
We note that the radius $r$ can depend on the graph and makes an important difference.
To explicitly distinguish the above radius $r$ from the radius of a point in polar coordinates, we often call it \emph{disk radius}.

We are usually interested in the asymptotics of $r$ with respect to the graph size $n$.
We say that a HUDG $G$ \emph{has disk radius $r$} if it is the intersection graph of hyperbolic disks of radius $r$.
Moreover, we say that a family $\calG$ of HUDGs \emph{has disk radius $r(n)$} if every $G \in \calG$ with $n$ vertices has disk radius $r(n)$.
This lets us use asymptotics, i.e., $\calG$ \emph{has disk radius $\Theta(r(n))$} if there exist constants $c_1, c_2$ such that every sufficiently large $n$-vertex graph $G \in \calG$ has disk radius $r_G$ for $c_1 r(n) \le r_G \le c_2 r(n)$.\footnote{We note that this is the natural way to use asymptotics.
  We only make this explicit as asymptotics can be unintuitive in this context, e.g., \enquote{the class of all HUDGs with radius $\Theta(\log n)$} is not well-defined.}
We usually only write $r$ and implicitly assume in the context of a graph class that it is a function depending on $n$ (as we do for other graph parameters like the clique number $\omega$).

As mentioned in the introduction, HUDGs with very small disk radius are almost Euclidean.
To study HUDGs that actually reflect the differences of hyperbolic geometry, one can resort to study families of HUDGs with disk radius in $\Theta(\log n)$~\cite{Struc_Indep_Hyper_Unifor_Disk_Graph-Blaesius25}.
Alternatively, one can look at so called \emph{strongly hyperbolic uniform disk graphs (SHUDGs)}~\cite{blasius_strongly_2023}.
A graph is a SHUDG if it is a HUDG with disk radius $r$ such that all vertices have distance at most $2r$ from the origin.
This means that a vertex placed at the origin would be \emph{universal}, i.e., adjacent to all other vertices of the graph.
Note that the neighborhood of every vertex in a HUDG induces an SHUDG, and conversely every SHUDG is the open or closed neighborhood of some vertex in an (S)HUDG.
Studying SHUDGs is in some sense similar to studying families of HUDGs with large disk radius: SHUDGs with bounded average degree have disk radius $\Omega(\log n)$~\cite{blasius_strongly_2023}.

\section{HUDGs Do Not Admit Product Structure}
\label{sec:large_radii}
\label{sec:lower_bound}

%

In this section, we show that hyperbolic uniform disk graphs with constant clique number do not admit product structure (\cref{main:noPS}), justifying the claim that, in contrast to Euclidean uniform disk graphs, they do not have a grid-like structure.
We remark that while our construction initially has disk radius $ \Theta(\log n) $, 
we extend it to arbitrary super-constant disk radii to prove the second part of \cref{main:rPS}.
We note that this is tight as we show in \cref{sec:upper_bound} that every family of HUDGs with disk radius in $O(1)$ and constant clique number has indeed product structure.
Similarly, we provide families of HUDGs justifying the stronger lower bounds in the second row of \cref{tab:bounds-on-rtw} for every super-constant disk radius.
These results are summarized in \cref{cor:lower-bound-sub-log-radius,cor:lower-bound-super-log-radius} at the end of the section.



To work towards rejecting product structure, first observe that in a grid, the neighborhood of each vertex has bounded size, and in particular bounded treewidth.
Though we explicitly use product structure so our graphs may have arbitrarily large neighborhoods, we exploit that product structure implies the treewidth of the neighborhood of each vertex to be bounded.

\begin{lemma}[{\cite[Lemma 6]{dujmovic_layered_2017}}, {\cite[Lemma 6]{dujmovic_planar_2019}}, see also~\cite{bose_ltw_rtw_2022}]\label{lem:tw_neighborhood}%
\label{lem:PS_neighboorhood}
    For every graph $H$, path $ P $, and every vertex $v$ of $ H \boxtimes P $, the closed neighborhood of $v$ has treewidth at most $ 3 \cdot (\tw(H) + 1) - 1 $.
\end{lemma}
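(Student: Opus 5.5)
Let $v=(h,p)$ with $h\in V(H)$ and $p\in V(P)$. The plan is to exhibit a tree decomposition of $G[N[v]]$, where $G=H\boxtimes P$, of width at most $3(\tw(H)+1)-1$. The key observation is that $N[v]$ is contained in $N_H[h]\times\{p^-,p,p^+\}$, where $p^-,p^+$ are the neighbors of $p$ on $P$ (if they exist). So $G[N[v]]$ is a subgraph of $H[N_H[h]]\boxtimes P_3$, and $P_3$ is a subgraph of $K_3$. Hence $G[N[v]]\subseteq H[N_H[h]]\boxtimes K_3$.

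Next I use that treewidth is monotone under subgraphs, together with the identity stated in the preliminaries, $\tw(H'\boxtimes K_k)=(\tw(H')+1)k-1$. It suffices to prove the upper bound $\tw(H'\boxtimes K_k)\le(\tw(H')+1)k-1$, which I verify directly. Take an optimal tree decomposition of $H'$ and replace every bag $B$ by $B\times V(K_k)$. Each vertex $(x,i)$ lies in the bags that contained $x$, which form a connected subtree. Every edge of the product joins $(x,i)$ and $(y,j)$ with $x=y$ or $xy\in E(H')$, so some bag contains both $x$ and $y$, and hence both endpoints. The bags have size at most $(\tw(H')+1)k$.

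Apply this with $H'=H[N_H[h]]$ and $k=3$. Since $H[N_H[h]]$ is a subgraph of $H$, $\tw(H[N_H[h]])\le\tw(H)$. This gives $\tw(G[N[v]])\le 3(\tw(H)+1)-1$.

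No step is a real obstacle. The only point needing care is the containment $N[v]\subseteq N_H[h]\times N_P[p]$, which follows from the definition of the strong product: each of the three adjacency types forces $u_H\in N_H[h]$ and $u_P\in N_P[p]$. Boundary cases, where $p$ is an endpoint of $P$ or $P$ is short, only shrink the graph.
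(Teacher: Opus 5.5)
Your proof is correct: $N[v]\subseteq N_H[h]\times N_P[p]$ places $G[N[v]]$ inside a subgraph of $H\boxtimes K_3$, and replacing each bag $B$ of an optimal tree decomposition by $B\times V(K_3)$ gives width at most $3(\tw(H)+1)-1$. The paper does not prove this lemma but cites it, and the cited argument is the same idea phrased through layered treewidth: the bags $B\times V(P)$, with the copies of $H$ as layers, show that $H\boxtimes P$ has layered treewidth at most $\tw(H)+1$, and since $N[v]$ meets only three consecutive layers, restricting these bags to $N[v]$ gives subsets of your bags $B\times\{p^-,p,p^+\}$ (that formulation additionally yields the bound $3k-1$ for every graph of layered treewidth $k$).
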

In particular, for every graph class $ \mathcal{G} $ admitting product structure, there is a constant $c$ such that for every graph $G \in \mathcal{G}$ and every vertex $v \in V(G)$, the neighborhood $ N[v] $ has treewidth at most $c$.
In consequence, to reject product structure, it suffices to construct a family of graphs with unbounded treewidth in the neighborhood of some vertex.
Moreover, \cref{lem:tw_neighborhood} shows that, asymptotically, the treewidth in the neighborhood of some vertex is a lower bound on the row-treewidth.
In the specific case of HUDGs, every SHUDG is the open or closed neighborhood of some vertex in a HUDG.
Thus, to prove \cref{main:noPS} we now aim for a family of SHUDGs with constant clique number but unbounded treewidth.

Recall that we restrict ourselves to constant clique number as the treewidth of cliques, and of the neighborhood of its vertices, is linear in its size.
As an intermediate step, however, we start with constructing a family of SHUDGs having clique number $O(\log \log n)$ and treewidth $\Omega(\log n)$, which is a result of independent interest as there already is an exponential gap between the clique number and the treewidth.
As we can assume SHUDGs to have a universal vertex, the same gap holds in the neighborhood of a vertex, yielding an exponential gap between the clique number and the row-treewidth.


\begin{theorem}\label{thm:hypergrid}
    There is a family of $ n $-vertex SHUDGs with disk radius in $\Theta(\log n)$, clique number
    in $O(\log \log n)$, and treewidth in $\Omega(\log n)$.
\end{theorem}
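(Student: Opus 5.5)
We construct an explicit SHUDG whose treewidth we can certify through a large clique minor, while keeping the clique number small by placing the points in a hierarchical, tree-like way. The name of the theorem suggests a ``hypergrid'': a $k \times k$ grid-like structure with $k = \Theta(\log n)$. Such a structure has a $K_k$-minor, or at least treewidth $\Omega(k)$. Fix the disk radius $r$ so that $2r = R$ with $R = \Theta(\log n)$, and let the origin be the universal vertex. All points lie at polar radius at most $R$, and two points are adjacent iff their hyperbolic distance is at most $R$. We use the radial coordinate as a scale parameter. By \eqref{eq:max_angular_distance}, two points at radii $\rad_1,\rad_2$ with $\rad_1+\rad_2 > R$ are adjacent iff their angular distance is at most roughly $\theta_R(\rad_1,\rad_2) \approx 2e^{(R-\rad_1-\rad_2)/2}$. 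So a point at radius $\rad$ ``sees'' an angular window of width about $e^{(R-2\rad)/2}$ among points of similar radius. Points closer to the origin see exponentially wider windows.

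The construction uses $k = \Theta(\log n)$ layers $i = 1,\dots,k$ at radii $\rad_i$ decreasing in $i$, spaced by a constant so that the angular window doubles from one layer to the next. Layer $i$ consists of $2^{k-i}$ equally spaced points, forming a dyadic subdivision of the circle. Layer $1$ has about $n$ points, and in total $n = \Theta(2^k)$. Within a layer, each point is adjacent only to $O(1)$ angular neighbours. Between consecutive layers, a point is adjacent to the $O(1)$ points beneath its angular interval. This gives roughly a binary-tree-of-cycles (a ``hyperbolic grid''). To obtain treewidth $\Omega(k)$ rather than $O(1)$, I would instead place, in each layer, points so that each layer-$i$ point's window covers a contiguous block and the windows of different layers overlap in a crossing pattern. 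Concretely, I would realise a $k \times k$ grid minor: row $j$ is a ``horizontal'' path formed by a chain of layer-$j$ points spanning a fixed angular arc $A$. Column $j$ is a radial path going down through all layers at angle $\varphi_j$, with $\varphi_1,\dots,\varphi_k$ spread across $A$. Since a $k\times k$ grid has treewidth $k$, and minors do not increase treewidth, we get $\tw = \Omega(\log n)$. The count of points stays polynomial because layer $j$ needs about $|A| / \theta_R(\rad_j,\rad_j) \approx |A| e^{\rad_j - R/2}$ points. With $|A|$ constant and $\rad_j \le R/2 + O(\log n)$ this is $\mathrm{poly}(n)$, and we rescale $R$ by a constant to match $n$.

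For the clique bound, points pairwise within distance $R$ must have similar radii and angles. I would show that a clique contains $O(1)$ points from each layer, since within a layer points are spaced at a constant fraction of the window. I would also show that a clique can use only $O(\log\log n)$ layers. This step is the main obstacle: two points in far-apart layers $i < j$ are adjacent across a large angular range, so a naive placement would give cliques of size $\Theta(k)$ along a radial column. To prevent this, I would choose the radii spacing geometrically rather than linearly, and offset column positions so that, for points $p$ in layer $i$ and $q$ in layer $j$ to be adjacent, their angular distance must be $\lesssim e^{(R-\rad_i-\rad_j)/2}$. A pigeonhole/doubling argument on the pairwise constraints then shows that any clique whose layers are indexed $i_1 < \dots < i_m$ forces $\rad$-gaps that grow exponentially in $m$; since all radii lie in an interval of length $O(\log n)$, this gives $m = O(\log\log n)$. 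Finally, adding the origin keeps the graph an SHUDG and increases $\omega$ by one. Verifying the required distance inequalities reduces to estimates of \eqref{eq:max_angular_distance} of the form $\theta_R(\rad_1,\rad_2) = \Theta(e^{(R-\rad_1-\rad_2)/2})$ when $\rad_1+\rad_2 \ge R + \Omega(1)$, which I treat as routine.
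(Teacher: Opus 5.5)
\textbf{There is a genuine gap, and it is the step you flag as the main obstacle.} Nothing in the proposal actually bounds the clique number, and the one concrete remedy you suggest conflicts with your treewidth argument.

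Your treewidth certificate uses one grid row per layer, so it needs $\Theta(\log n)$ layers. Spacing the radii geometrically inside an interval of length $O(\log n)$ leaves room for only $O(\log\log n)$ layers. The grid minor then certifies only treewidth $\Omega(\log\log n)$. That sparsification is how the paper trades treewidth for constant clique number in its second construction (keeping only levels $2^k$); it is not how it proves this theorem.

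Moreover, the columns as you describe them, radial paths at fixed angles $\varphi_j$, are themselves cliques of size $k$: any two points on a common ray at polar radius at most $R$ are within distance $R$. The ``pigeonhole/doubling argument'' also cannot come from the window estimate $\theta_R(\rho_1,\rho_2)=\Theta(e^{(R-\rho_1-\rho_2)/2})$ alone. That estimate holds equally for radially aligned layers, which contain $k$-cliques. What decides the clique number is the relative angular placement of the layers, and you never pin it down.

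\textbf{The construction you discarded is the paper's.} Up to reversing the indices, your dyadic layers with linearly spaced radii (window doubling per layer) are the root together with regular $2^k$-gons of side length $2r$ for $2\le k\le r$, where $\sinh\rho_k=\sinh r/\sin(\pi/2^k)$. Your premise that this is a binary tree of cycles with treewidth $O(1)$ is false, because edges are not confined to consecutive layers. A vertex on level $i$ has at least two and at most $4\sqrt{2}^{\,j-i}$ neighbours on every level $j\ge i$. Since each level induces a cycle and every two levels are joined by an edge, contracting the levels gives a $K_r$-minor. So treewidth $\Omega(\log n)$ needs no grid at all.

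\textbf{How the paper bounds the clique number.} Keep the linear spacing but \emph{stagger} the levels: align the polygons so that every lower-level vertex lies angularly halfway between two consecutive vertices of each higher level. This is the precise version of your ``offset'' instinct. Now take a triangle on levels $i<j<k$.
\begin{itemize}
\item The vertices $v_i,v_j$ are at angular distance at least $\pi/2^j$, so at least $2^{k-j-1}$ level-$k$ vertices lie between them.
\item Since $v_i,v_j$ have a common neighbour on level $k$, their contiguous level-$k$ neighbourhoods overlap. Hence the right neighbours of $v_i$ and the left neighbours of $v_j$ cover all of these vertices.
\item By the upper bound on $\theta_{2r}$, which needs an estimate such as $\arccos(1-x)\le\pi\sqrt{x/2}$, these two sets together contain at most $4\cdot 2^{(k-i)/2}$ vertices.
\end{itemize}
This yields $k-j\le j-i+6$. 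A separate extremal lemma shows that any $S\subseteq[r]$ satisfying this for all triples has $|S|<\log_2 r+8$. Each level contributes at most two clique vertices because it induces a cycle. Together this gives $\omega=O(\log r)=O(\log\log n)$.
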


We then identify an induced subgraph within each of the constructed SHUDGs whose clique number is in $\Theta(1)$ and whose
treewidth is in $\Omega(\log \log n)$, proving \cref{main:noPS} not only for HUDGs but even for the strict subclass of the \emph{strongly} hyperbolic uniform disk graphs, and here even for those having disk radius $ \Theta(\log n)$.
In addition, \cref{thm:hypergrid,thm:sub-hypergrid} together prove \cref{main:treewidth}.

\begin{theorem}\label{thm:sub-hypergrid}
    There is a family of $ n $-vertex SHUDGs with disk radius in $\Theta(\log n)$, clique number in $\Theta(1)$,
    and treewidth in $\Omega(\log \log n)$.
\end{theorem}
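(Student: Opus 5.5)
Since Theorem~\ref{thm:hypergrid} already provides SHUDGs with clique number $O(\log\log n)$ and treewidth $\Omega(\log n)$, the plan is to extract from them an induced subgraph. Being an induced subgraph, it is automatically again an SHUDG with the same disk radius: all remaining points still lie within distance $2r$ of the origin. Let $G_n$ denote the graph from Theorem~\ref{thm:hypergrid}, where $r \in \Theta(\log n)$. The first step is to make its structure explicit. I expect it to consist of $k = \Theta(\log\log n)$ "levels" (radial bands), each carrying a path-like family of vertices. Large cliques arise only because points within one angular window at a given radius are pairwise close. Treewidth $\Omega(\log n)$ should come from a large grid-like or clique minor that spans the levels through long angular paths.

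The second step is to thin $G_n$. Since the clique number is only $O(\log\log n)$, I would keep one representative per "clique cluster". Concretely, this means retaining a set of vertices that is sparse in every angular window at every radius. I would choose these so that any two kept vertices at the same level are far apart except for consecutive ones, and so that vertices at different levels meet only in a bounded number of neighbors. From the distance formula via $\theta_{2r}(\rad_1,\rad_2)$ in~\eqref{eq:max_angular_distance}, two points with radii $\rad_1,\rad_2$ are adjacent iff their angular distance is at most roughly $2e^{r-(\rad_1+\rad_2)/2}$. This lets me check directly that any clique in the thinned graph uses only $O(1)$ levels and $O(1)$ vertices per level, so $\omega = \Theta(1)$.

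The third step, which I expect to be the main obstacle, is to retain a large minor. The aim is a $K_t$ minor (or a $t\times t$ grid minor) with $t = \Omega(\log\log n)$; the idea is the Hadwiger-number remark in the introduction. I would build $t$ branch sets, each a connected path running in angle along one level (or a chain of representatives), and arrange that every pair of branch sets has a kept vertex pair at adjacent radii whose angular windows overlap. The delicate part is that keeping cliques constant forbids stacking many branch sets at the same angle. The $t = \Theta(\log\log n)$ levels must therefore be staggered so that the pair $(i,j)$ meets in its own dedicated angular sector. There are $\binom{t}{2}$ such sectors, which is affordable because the angular window of a level shrinks exponentially in the radius and $n$ is exponentially larger than $\log n$. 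If the clique minor does not work out, I would fall back to the weaker but sufficient grid minor. Level paths serve as rows, and radial "spokes" descending through all levels at $t$ chosen angles serve as columns. This requires only constant-size local overlaps. It yields treewidth $\ge t$ by the grid-minor lower bound, and treewidth is minor-monotone as recalled in the preliminaries.

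Finally I would verify the parameters. The number of vertices stays at most $n$, and removing vertices can only shrink $n$; the $\log\log$ of the new vertex count is still $\Theta(\log\log n)$ provided at least $\mathrm{poly}(\log n)$ vertices remain, so the treewidth bound $\Omega(\log\log n)$ holds. The radius is rescaled if needed so that it remains $\Theta(\log n')$ for the new vertex count $n'$. One can also pad with isolated far-away points at distance $\le 2r$, which adds no cliques, to restore the relation between $n$ and $r$.
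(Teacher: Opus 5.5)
\emph{There is a genuine gap: Step~2 never specifies the induced subgraph and never proves the clique bound.} Your starting point matches the paper: an induced subgraph of the graphs from \cref{thm:hypergrid} is still an SHUDG with the same radius. But the structural picture Step~2 relies on is wrong. In the paper's graph $G_r$ there are $r=\Theta(\log n)$ levels, not $\Theta(\log\log n)$; level $k$ is the vertex set of a regular $2^k$-gon of side length $2r$. Each level induces a \emph{cycle}, so there are no dense angular clusters inside a level. Large cliques come only from edges \emph{between} levels. By \cref{lem:Gr_neighbors}, a vertex on level $i$ has between $2$ and $O(2^{(j-i)/2})$ neighbors on every level $j\ge i$, so vertices on $\Theta(\log r)$ different levels near a common angle can be pairwise adjacent. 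Thinning inside levels (``one representative per cluster'') therefore targets a nonexistent source of cliques, and it breaks the level paths you want as branch sets. Your assertion that every clique ``uses only $O(1)$ levels'' is exactly what needs proof, and you give no argument for it.

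The missing idea is a quantitative constraint on triangles that meet three levels, \cref{lem:Gr_triangle}: for a triangle on levels $i\le j\le k$ one has $k-j\le j-i+6$. To see this, note the two lower vertices have only $O(2^{(k-i)/2})$ neighbors on level $k$ combined. Yet $\Omega(2^{k-j})$ level-$k$ vertices lie in the angular gap of size at least $\pi/2^j$ between them, and a common neighbor forces the two neighbor windows to cover that gap. The paper therefore keeps only the root and the levels $2^3,2^4,\dots,2^{\log_2 r}$. For kept levels $2^a<2^b<2^c$ with $a\ge 3$ we have $2^c-2^b\ge 2^b>2^b-2^a+6$, so no triangle meets three kept levels, giving $\omega\le 2\cdot 2+1=5$. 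Since every two levels are joined by an edge and each level is a cycle, contracting each kept level gives a clique minor of size $\Theta(\log r)=\Theta(\log\log n)$. Your worry that constant clique number ``forbids stacking many branch sets at the same angle'' is unfounded. The kept levels are full cycles stacked at every angle, and the clique bound holds because adjacency between levels is far from transitive, so neither the sector scheme nor the grid fallback is needed.

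Your final bookkeeping also fails as stated. The hyperbolic plane admits no rescaling. Keeping only $\mathrm{poly}(\log n)$ vertices would violate $r\in\Theta(\log n')$, where $n'$ is the new vertex count. Isolated padding points within distance $2r$ of the origin are impossible once the universal root is kept. In the paper nothing needs fixing: the outermost kept level $2^{\lfloor\log_2 r\rfloor}\ge r/2$ alone has $2^{\Omega(r)}$ vertices.
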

Comparing the two theorems, we remark that \cref{thm:hypergrid} provides a stronger lower bound on the treewidth, and therefore also row-treewidth, of SHUDGs stating that there is a family of SHUDGs
with treewidth and row-treewidth $\Omega (\omega \frac{\log n}{\log \log n} )$, where $\omega$ denotes the clique number.
Note, however, that we only show that this bound can be obtained for $\omega \in \Theta(\log \log n)$, rather than for an
arbitrary clique number.
Notably, it remains open whether there exists a family of SHUDGs with constant clique number and treewidth $\Omega ( \frac{\log n}{\log \log n} )$.
However, with regard to product structure, we are primarily interested in SHUDGs with constant clique number, whose treewidth and row-treewidth is unbounded by \cref{thm:sub-hypergrid}.


\subsection{Proof of \texorpdfstring{\cref{thm:hypergrid}}{Theorem 8}}
\label{sec:lower_bound_tw_logn}
\todo{adjust theorem number in texorpdfstring}


For each integer $ r \geq 2 $, we construct an $n$-vertex SHUDG $G_r$ by placing $ n \in \Theta(2^r) $ vertices into the hyperbolic plane, which yields $ r \in \Theta(\log n) $ for the disk radius as required by \cref{thm:hypergrid,thm:sub-hypergrid}.
First, we place a vertex at the origin which we call the \emph{root}.
Second, for $ 2 \leq k \leq r $, let $ A_k $ be a regular $ 2^k $-gon with side lengths $ 2r $ whose center is at the origin.
We align these polygons such that for each $ k > 2 $, every other edge of $ A_k $ has the same perpendicular bisector as some edge of $ A_{k-1} $; see \cref{fig:regular_k-gons}.
Now, let $ V_k $ denote the vertex set of $ A_k $ and let $ V_1 $ contain exactly the root.
We say a vertex is in the \emph{$k$-th level} if it is contained in $ V_k $.
The vertex set of $ G_r $ is defined as the union of all $ V_k $, $ k \in [r] $, and two vertices are adjacent if and only if their distance is at most $ 2r $.
In particular, the sides of the polygons are also edges in $ G_r $, and there are no further edges within a level as chords in a regular polygon are strictly longer than their sides.
That is, each level induces a cycle, where the angular distance between any two adjacent vertices in level $ k $ is $ 2 \pi / 2^k $.
Note that the number of vertices is indeed in $ \Theta(2^r) $, as required.
Without going into detail, we note that the above construction is inspired by an analysis of the treewidth of random SHUDGs by Bläsius, Friedrich, and Krohmer~\cite{blasius_hyperbolic_2016}; see \cref{fig:regular_k-gons}.

\begin{figure}
    \centering
    \includegraphics{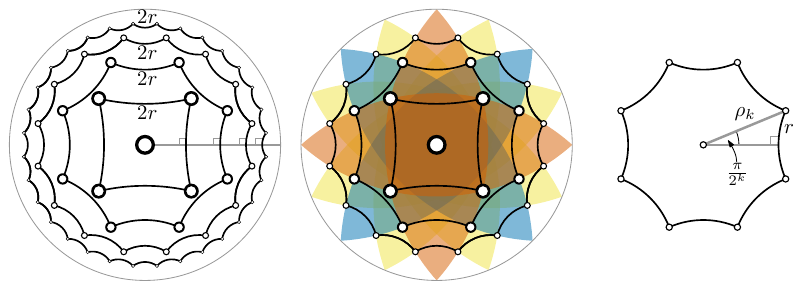}
    \caption{%
      Left: Vertex positions of $ G_r $ constructed for \cref{thm:hypergrid}, where the $ k $-th level, $ k \geq 2 $, induces a $ 2^k $-cycle with consecutive vertices having angular distance $ 2\pi / 2^k $.
      Center: Equivalent construction inspired by the separators in \cite{blasius_hyperbolic_2016}, where the vertices are placed at intersection points of hypercycles.
      Right: A right triangle for $ A_k $ justifying \cref{obs:sinh_rk}.  
    }
    \label{fig:regular_k-gons}
\end{figure}

Before bounding the clique number and treewidth of $ G_r $ for \cref{thm:hypergrid}, let us analyze which edges we have between distinct levels.
For this, it is convenient to observe the following connection between the radius $\rad_k$ of a vertex in level $ k $ and the disk radius $ r $, which holds due to the trigonometry of right triangles; refer to \cref{fig:regular_k-gons}.

\begin{observation}\label{obs:sinh_rk}
    For each $ 2 \leq k \leq r $, the radius $ \rad_k $ of a vertex in level $ k $ satisfies
    \[ \sin \left( \frac{\pi}{2^k} \right) = \frac{\sinh(r)}{\sinh(\rad_k)}. \]
\end{observation}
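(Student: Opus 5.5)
We need to prove $\sin(\pi/2^k) = \sinh(r)/\sinh(\rad_k)$, where $\rad_k$ is the distance from the origin to a vertex of the regular $2^k$-gon $A_k$ with side length $2r$ centered at the origin. The plan is to work in a single right triangle and apply the hyperbolic sine rule for right triangles, namely that the sine of an angle equals the sinh of the opposite side divided by the sinh of the hypotenuse.

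Concretely, fix two consecutive vertices $u,w$ of $A_k$, and let $m$ be the midpoint of the side $uw$. Because $A_k$ is regular and centered at the origin $O$, both $u$ and $w$ lie at distance $\rad_k$ from $O$. Hence the triangle $Ouw$ is isosceles, and the segment $Om$ is perpendicular to $uw$: by the congruence $Ou \cong Ow$, $um \cong wm$, the triangles $Oum$ and $Owm$ are congruent, which holds in hyperbolic geometry since SSS does not use the parallel axiom. Thus $Oum$ is a right triangle with the right angle at $m$.

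Next we read off the data of this right triangle. The hypotenuse is $Ou$, of length $\rad_k$. The leg $um$ opposite $O$ has length $r$, half the side length $2r$. The angle at $O$ is half the central angle $2\pi/2^k$ between consecutive vertices, i.e. $\pi/2^k$. The hyperbolic right-triangle identity $\sin A = \sinh a / \sinh c$ (for example from Greenberg's textbook, as cited for trigonometric formulas) then gives exactly the claimed equation.

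The only point requiring care is the existence of $A_k$ itself: the right triangle must actually close up with angle $\pi/2^k$ at $O$ and opposite leg $r$. This amounts to $\sinh(r) < \sinh(\rad_k)$ having a solution, i.e. choosing $\rad_k$ with $\sinh \rad_k = \sinh r / \sin(\pi/2^k)$, which always exists because $\sinh$ is a bijection on the positive reals. So nothing actually fails, and the observation is just this standard identity applied to the figure.
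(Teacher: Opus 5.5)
Your proposal is correct and is essentially the paper's argument: the paper justifies the observation by the trigonometry of the right triangle formed by the origin, a vertex of $A_k$, and the midpoint of an incident side, which has angle $\pi/2^k$ at the origin, opposite leg $r$, and hypotenuse $\rho_k$, so that $\sin A = \sinh a / \sinh c$ gives the identity. Your additional remarks on why the segment from the origin to the midpoint is perpendicular and on the existence of $A_k$ are fine, but the paper does not spell them out.
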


We divide the edge analysis into two parts:
First, we show that $ G_r $ is a \emph{strongly} hyperbolic uniform disk graph, with the root being a universal vertex adjacent to all other vertices.
And second, we consider the edges between any two levels defined by the regular polygons.

\begin{lemma}\label{lem:hypergrid_shudg}
    For each integer $ r \geq 2 $, the constructed graph $ G_r $ is a SHUDG.
\end{lemma}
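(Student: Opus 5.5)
By definition, $G_r$ is a HUDG with disk radius $r$: two vertices are adjacent exactly when their distance is at most $2r$. So to show that $G_r$ is a SHUDG, the plan is to check that every vertex lies within distance $2r$ of the origin. The root sits at the origin, so this reduces to showing $\rad_k \le 2r$ for every level $2 \le k \le r$. Equivalently, the root is then universal.

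\textbf{Step 1: monotonicity in $k$.} \cref{obs:sinh_rk} gives
\[ \sinh(\rad_k) = \frac{\sinh(r)}{\sin(\pi/2^k)}. \]
Since $\sin(\pi/2^k)$ decreases in $k$, the radius $\rad_k$ increases with $k$. It therefore suffices to treat the outermost level $k = r$.

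\textbf{Step 2: bound the outermost level.} For $x \in (0, \pi/2]$ we have $\sin x \ge 2x/\pi$, so
\[ \sin(\pi/2^r) \ge 2/2^r = 2^{1-r}. \]
This gives $\sinh(\rad_r) \le 2^{r-1}\sinh(r)$. We need this to be at most $\sinh(2r)$. Using $\sinh(2r) = 2\sinh(r)\cosh(r)$, the condition becomes
\[ 2^{r-1} \le 2\cosh(r), \quad\text{i.e.}\quad 2^{r-2} \le \cosh(r). \]
This holds because $\cosh(r) > e^r/2 \ge 2^r/2 \ge 2^{r-2}$. Since $\sinh$ is increasing, we conclude $\rad_k \le \rad_r \le 2r$ for all $k$.

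\textbf{Step 3: conclude.} Every vertex lies within distance $2r$ of the origin, so $G_r$ is a SHUDG.

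There is no real obstacle; the only care needed is that the estimate works for every integer $r \ge 2$, not just asymptotically. The bound $\sin x \ge 2x/\pi$ together with $e > 2$ handles this uniformly. One should also confirm that the polygons $A_k$ exist for all such $k$: a regular $2^k$-gon with any prescribed side length exists in the hyperbolic plane, and \cref{obs:sinh_rk} already presupposes this.
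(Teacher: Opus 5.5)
Your proof is correct and follows the paper's argument: both use \cref{obs:sinh_rk} to reduce to the outermost level, bound $\sin(\pi/2^r)$ from below linearly, and compare the result with $\sinh(2r)$. The differences are cosmetic. You use $\sin x \ge 2x/\pi$ and the identity $\sinh(2r) = 2\sinh(r)\cosh(r)$, whereas the paper uses $\sin x \ge x/2$ and expands $\sinh$ into exponentials; you also state explicitly that $\rad_k$ increases with $k$, which the paper leaves implicit.
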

\begin{proof}
    Recall that $G_r$ has $r$ levels.
    We thus need to show that the radius $ \rad_r $ of the vertices in level $ r $ is at most $ 2r $, which implies that they are adjacent to the root.
    Since $ \sinh(x) = (e^x - e^{-x}) / 2$ is monotonically increasing, this is equivalent to $ \sinh(\rad_r) \leq \sinh(2r) $.
    
    By \cref{obs:sinh_rk}, we need to compare the ratio of $ \sinh(r) $ and $ \sin(\pi/2^r) $ to $ \sinh(2r) $, which intuitively works out as $ \sinh(x) \approx e^x $ and $ \sin(x) \approx x $ yield $ \sinh(r) / \sin(\pi/2^r) \approx e^r / 2^{-r} = e^r \cdot 2^r \leq e^{2r} $.
    Indeed, using the exact definition of $\sinh$ and $\sin(x) \ge x / 2$ for $x \in [0, \pi/2]$, we obtain
      \begin{equation*}
        \sinh(\rad_r) 
        \overset{\text{Obs } \labelcref{obs:sinh_rk}}{=} 
            \frac{\sinh(r)}{\sin\left( \frac{\pi}{2^r} \right)}
        \le \frac{e^r - e^{-r}}{2} \cdot \frac{2 \cdot 2^r}{\pi}
        = \frac{e^r 2^r - e^{-r} 2^{r}}{\pi}
        \le \frac{ e^{2r} - e^{-2r}}{2} = \sinh(2r),
      \end{equation*}
      where the last inequality follows from $2^r \le e^r$, $2^r \ge e^{-r}$, and $\pi \ge 2$.
\end{proof}

To investigate which neighbors vertices have in addition to the root, let us first prepare some inequalities that are independent of our construction.

\begin{lemma}\label{lem:hyp-ineq}
    \newlength{\extraspace}
    \setlength{\extraspace}{0.3em}
    For all $ x \in \mathbb{R}_+ $ the following holds:
    \vspace{\extraspace / 2}
    \begin{enumerate}[(1)]
        \setlength{\itemsep}{\extraspace}
        \item \label{eq:cosh/sinh}
            $ \frac{\cosh(x)}{\sinh(x)} \geq 1 $
        \item \label{eq:cosh/sinh_2}
            $ \frac{\cosh(2x)}{\sinh^2(x)} \leq 2.1 $ for $ x \geq 2 $
        \item \label{eq:arccos}
            $ \sqrt{2x} \leq \arccos(1-x) \leq \pi \sqrt{\frac{x}{2}} $ for $ x \leq 2 $
        \end{enumerate} \vspace{\extraspace / 2} Moreover, (\ref{eq:cosh/sinh_2}) and (\ref{eq:arccos}) are tight up to constant factors.
        The same is true for (\ref{eq:cosh/sinh}) if $x$ is at least some constant.
\end{lemma}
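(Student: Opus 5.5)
These are three elementary inequalities, and I will prove each directly from the exponential definitions of $\sinh$ and $\cosh$ and from Taylor-type bounds for $\cos$. Each tightness claim will come with an explicit witness.

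For (\ref{eq:cosh/sinh}), note that $\cosh(x) - \sinh(x) = e^{-x} > 0$ and $\sinh(x) > 0$ for $x > 0$, so the ratio exceeds $1$. For tightness, the ratio equals $\coth(x) = 1 + 2/(e^{2x}-1)$. This is at most $2$ once $x \ge \tfrac12 \ln 3$, so for $x$ bounded below by a constant the bound $1$ is within a constant factor. Near $0$ the ratio blows up, which explains the caveat in the statement.

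For (\ref{eq:cosh/sinh_2}), I will use $\sinh^2(x) = (\cosh(2x)-1)/2$. The ratio then becomes $2\cosh(2x)/(\cosh(2x)-1) = 2 + 2/(\cosh(2x)-1)$, which is decreasing in $x$. It therefore suffices to check $x = 2$: there $\cosh(4) \approx 27.3$, giving about $2 + 2/26.3 \approx 2.076 \le 2.1$. Tightness is immediate because the expression is always at least $2$.

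For (\ref{eq:arccos}), I substitute $y = \arccos(1-x) \in [0,\pi]$ for $x \in [0,2]$, so that $x = 1 - \cos y = 2\sin^2(y/2)$. The lower bound $\sqrt{2x} \le y$ is equivalent to $x \le y^2/2$, which is $1-\cos y \le y^2/2$, the standard consequence of $\cos y \ge 1 - y^2/2$. The upper bound $y \le \pi\sqrt{x/2}$ is equivalent to $y^2 \le \pi^2 \sin^2(y/2)$, i.e.\ $\sin(y/2) \ge (y/2)\cdot(2/\pi)$ for $y/2 \in [0,\pi/2]$. This is Jordan's inequality, which follows from concavity of $\sin$ on $[0,\pi/2]$. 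Since the two bounds differ only by the constant factor $\pi/2$, each is tight up to a constant factor.

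I expect no step to be a real obstacle. The only care needed is the numerical check at $x=2$ in (\ref{eq:cosh/sinh_2}), together with the monotonicity that makes that one check sufficient, and getting the constant right in the concavity argument for (\ref{eq:arccos}).
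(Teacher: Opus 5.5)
Your proof is correct and follows the paper's argument essentially step by step. Part (1) comes from the exponential definitions. Part (2) uses monotonicity to reduce to the check at $x=2$; you rewrite via $\sinh^2 x = (\cosh 2x - 1)/2$ where the paper bounds the reciprocal $\frac12 - \frac{1}{e^{2x}+e^{-2x}}$, which is the same computation. The upper bound in (3) uses Jordan's inequality together with the double-angle formula, as in the paper. The only real deviation is the lower bound in (3): your argument via $\cos y \ge 1 - y^2/2$ is a slightly cleaner justification than the paper's appeal to the series expansion of $\arccos(1-x)$, which tacitly relies on all coefficients of that expansion being nonnegative.
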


\NewDocumentCommand{\ineqref}{sm}{%
    \IfBooleanT{#1}{\renewcommand{\crefpairconjunction}{,}}%
    \text{%
        \IfBooleanTF{#1}{Lem}{Lemma} 
        \labelcref{lem:hyp-ineq}\,(\labelcref{#2})%
    }%
    \renewcommand{\crefpairconjunction}{ and~}%
}
\begin{proof}
    The first inequality (\ref{eq:cosh/sinh}) (and its claimed tightness) follows directly from the definitions of $\cosh(x) = (e^x + e^{-x}) / 2$ and $\sinh(x) = (e^x - e^{-x}) / 2$.
    
    For the upper bound of inequality (\ref{eq:cosh/sinh_2}), it is more convenient to give a lower bound for the reciprocal.
    Using the definition of $\sinh$ and $\cosh$, we get
    \begin{equation*}
        \frac{\sinh^2(x)}{\cosh(2x)} =
        \frac{1}{2} \cdot \frac{(e^x - e^{-x})^2}{e^{2x} + e^{-2x}} =
        \frac{1}{2} - \frac{1}{e^{2x} + e^{-2x}}
    \end{equation*}
    As $e^{2x} + e^{-2x}$ is increasing and $x \ge 2$ by assumption, the above difference becomes minimal for $x = 2$, yielding $\sinh^2(x) / \cosh(2x) \ge 0.48$.  As the reciprocal of $0.48$ is less than $2.1$, the claim follows.  Tightness follows from the fact that the term is lower-bounded by $2$.
    
    Finally, the lower bound of (\ref{eq:arccos}) holds due to the series expansion of $ \arccos(1-x) $ at $ x = 0 $, which lower-bounds $ \arccos(1-x) $ for all $ x \in [0,2] $.
    For the upper bound, we have $ \sin(y) \geq 2y/\pi $ for $ y \in [0, \pi/2] $ and thus $ \sin^2(y) \geq 4y^2/\pi^2 $.
    Using the double-angle formula $ \cos(2y) = 1 - 2 \sin^2(y) $ and taking $ 2y = \pi \sqrt{\frac{x}{2}} $ for $ x \in [0, 2] $, we obtain 
    $ \cos(\pi \sqrt{\frac{x}{2}}) 
    = \cos(2y) 
    = 1 - 2 \sin^2(y) 
    \leq 1 - 8y^2/\pi^2
    = 1 - 8 (\frac{\pi}{2} \sqrt{\frac{x}{2}})^2 / \pi^2
    = 1 - x $,
    which yields the desired bound by taking the arccosine on both sides.  Tightness follows from the fact that the lower and upper bound match up to a constant factor.
\end{proof}

We now have all ingredients to count the number of neighbors a vertex in level $ i $ has in level $ j $.
For our purposes, we are mainly interested in an upper bound, that is, we have a guarantee that vertices we consider as non-adjacent indeed do not share an edge, whereas we might have some fewer edges than obtained from the upper bound.

\begin{lemma}\label{lem:Gr_neighbors}
    For every $ 2 \leq i \leq j \leq r $, every vertex of $ G_r $ in level $ i $ has at least two and at most $ 4 \sqrt{2^{j-i}} $ neighbors in level $ j $, 
    where the upper bound is tight up to a constant factor.
\end{lemma}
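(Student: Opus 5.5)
The plan is to reduce the neighbor count to an angular computation. Since level $j$ is a regular $2^j$-gon with consecutive vertices at angular distance $2\pi/2^j$, a vertex $u$ in level $i$ is adjacent to a vertex $v$ in level $j$ if and only if their angular distance is at most $\theta_{2r}(\rad_i,\rad_j)$. The number of neighbors of $u$ in level $j$ is therefore roughly $2\theta_{2r}(\rad_i,\rad_j)/(2\pi/2^j)$, plus at most one. So it suffices to show
\[
\theta_{2r}(\rad_i,\rad_j) = \Theta\bigl(2^{-(i+j)/2}\bigr),
\]
with explicit constants. Multiplying this by $2^j/\pi$ gives $\Theta(\sqrt{2^{j-i}})$.

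For the upper bound, I write
\[
\theta_{2r}(\rad_i,\rad_j)=\arccos(1-x)
\quad\text{with}\quad
x=\frac{\cosh(2r)-\cosh(\rad_i-\rad_j)}{\sinh\rad_i\sinh\rad_j},
\]
using $\cosh a\cosh b-\sinh a\sinh b=\cosh(a-b)$. I bound $x\le \cosh(2r)/(\sinh\rad_i\sinh\rad_j)$. Observation~\ref{obs:sinh_rk} gives $\sinh\rad_k=\sinh(r)/\sin(\pi/2^k)$, so
\[
x\le \frac{\cosh(2r)}{\sinh^2 r}\,\sin(\pi/2^i)\sin(\pi/2^j)\le 2.1\,\pi^2 2^{-i-j},
\]
by Lemma~\ref{lem:hyp-ineq}(\ref{eq:cosh/sinh_2}) and $\sin y\le y$. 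Since $i,j\ge 2$, this is at most $2$, so Lemma~\ref{lem:hyp-ineq}(\ref{eq:arccos}) applies and gives $\theta\le \pi\sqrt{x/2}\le c\,2^{-(i+j)/2}$. The remaining step is bookkeeping: check that the count $2\lfloor\theta 2^j/(2\pi)\rfloor+1$ stays below $4\sqrt{2^{j-i}}$. Because $j\ge i$ gives $\sqrt{2^{j-i}}\ge1$, the additive $+1$ is absorbed. I expect the constant chase here to be the main nuisance. If it fails, I will use the sharper bound $\cosh(2r)/\sinh^2 r\le 2.1$ together with $\sin y\le y$, and count carefully using the alignment of the polygons: every other edge of $A_k$ shares a bisector with an edge of $A_{k-1}$, so the level-$i$ vertices sit symmetrically relative to level $j$.

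For tightness, I apply the lower bound $\arccos(1-x)\ge\sqrt{2x}$ and bound $x$ from below. The numerator satisfies $\cosh(2r)-\cosh(\rad_i-\rad_j)\ge \cosh(2r)/2$ whenever $|\rad_i-\rad_j|\le 2r-1$. This holds because $\rad_j\le 2r$ by Lemma~\ref{lem:hypergrid_shudg} and $\rad_i\ge r$. For the denominator I use $\sin y\ge 2y/\pi$ and $\cosh(2r)\ge\sinh^2 r$. This yields $x=\Omega(2^{-i-j})$ and hence $\Omega(\sqrt{2^{j-i}})$ neighbors.

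For the lower bound of two neighbors, the case $i=j$ is immediate: the two polygon neighbors on the cycle of level $i$ suffice. For $j>i$, I use the alignment. The vertex $u$ lies angularly within $\pi/2^j$ of some level-$j$ vertex, which is either at the same angle or at an adjacent one by the bisector alignment. I then need $\theta_{2r}(\rad_i,\rad_j)\ge 2\pi/2^j$, or, in the aligned case, $\ge \pi/2^j$, so that the two nearest level-$j$ vertices lie within range. My first approach is the explicit lower bound $\theta\ge\sqrt{2x}$ from above. If that does not give the required constant, I will argue geometrically instead. Each vertex of $A_j$ at distance $2r$ along a side from its neighbor lies farther from the origin than level $i$. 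The triangle inequality combined with monotonicity of distance in angular difference then shows that $u$ is within $2r$ of the endpoints of the level-$j$ edge whose bisector passes near $u$.
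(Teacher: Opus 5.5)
\textbf{Upper bound.} This part follows the paper's argument step for step: rewrite via Observation~\ref{obs:sinh_rk}, drop the $\cosh(\rho_i-\rho_j)$ term, apply Lemma~\ref{lem:hyp-ineq} and $\sin y\le y$, then count points spaced $2\pi/2^j$. One small correction: absorbing the $+1$ needs $j>i$, since $\pi\sqrt{1.05}+1>4$. Dispose of $i=j$ via the cycle, as the paper does.

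\textbf{Two-neighbor lower bound and tightness.} This is where you genuinely differ. The paper argues geometrically: the disk of radius $2r$ around $v_j^\ell$ contains $O$ and $v_j^r$, so by convexity it contains a region that should contain $v_i$. You instead lower-bound $\theta_{2r}(\rho_i,\rho_j)$ analytically. That also proves the tightness the paper only asserts. The route works, but as written it does not yet close, for two reasons.

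First, the alignment fixes the configuration. The angular offset between level-$i$ and level-$j$ vertices is $\sum_{k=i+1}^{j}\pm\pi/2^k$ modulo $2\pi/2^j$, which is an odd multiple of $\pi/2^j$. So $v_i$ always sits exactly halfway between two consecutive level-$j$ vertices. The same-angle case never occurs, and you need exactly $\theta\ge\pi/2^j$.

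Second, your stated constants are too weak for this. Using $\cosh 2r\ge\sinh^2 r$ and a numerator of at least $\cosh(2r)/2$ gives only $x\ge 2\cdot 2^{-i-j}$. That yields $\theta\ge 2\sqrt2/2^j<\pi/2^j$ when $j=i+1$. Use instead $\cosh 2r=1+2\sinh^2 r$ and $0\le\rho_j-\rho_i\le r$. Then $\cosh(\rho_j-\rho_i)\le\cosh r\le 0.14\cosh 2r$ for $r\ge 2$, which gives $x\ge 6.8\cdot 2^{-i-j}$ and $\theta\ge\sqrt{2x}\ge 3.6\cdot 2^{-(i+j)/2}\ge 5/2^j$.

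\textbf{Geometric fallback.} As stated it is too vague to rely on, and its natural reading fails. Let $p$ be the point at radius $\rho_j$ on the ray through $v_i$. The triangle inequality gives $d(v_i,v_j^\ell)\le(\rho_j-\rho_i)+d(p,v_j^\ell)$. Here $d(p,v_j^\ell)\approx 2r-2\ln 2$ and $\rho_j-\rho_i\approx(j-i)\ln 2$, so the bound exceeds $2r$ for large $j-i$. The right tool is convexity of the disk. We have $d(O,v_j^\ell)=\rho_j\le 2r$, and $d(p,v_j^\ell)<d(v_j^r,v_j^\ell)=2r$ by monotonicity in the angle. Hence the whole segment $Op$, which contains $v_i$, lies in the disk of radius $2r$ around $v_j^\ell$.

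This is also how the paper's step must be read. The paper says $v_i$ lies in the triangle $Ov_j^\ell v_j^r$, but for $r\ge 2$ the apothem $a_j$ of $A_j$ satisfies $\cosh a_j=\cosh\rho_j/\cosh r<2^{j-1}+1<\sqrt2\sinh r<\cosh\rho_i$. So $v_i$ actually lies outside $A_j$. Your analytic route avoids this containment question entirely.
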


Before proving \cref{lem:Gr_neighbors}, let us discuss its implications to the structure of $ G_r $.
For this, consider a vertex $ v $ in some level $ i $ and the two closest vertices in level $ i + 1 $, one to the left and one to the right.
Note that these two vertices are connected to $ v $ since $ v $ has at least two neighbors.
Now let $ T $ denote the tree induced by the four edges from the root to the first level and the edges to the two closest neighbors for each vertex in levels $ 2, \dots, r - 1 $, see \cref{fig:Gr_neighbors}.
Observe that, apart from the root having degree 4, $ T $ is a spanning binary tree.
That is, the number of descendants of some vertex grows with base 2 in $ T $, whereas the number of neighbors in $ G_r $ grows with base $ \sqrt{2} $.
Hence, asymptotically, vertices that are adjacent in $ G_r $ have an ancestor-descendant relation in $ T $.
Additionally, for a vertex in level $ i $, there are far fewer neighbors than descendants in level $ j $, provided $ j - i $ is sufficiently large.
This observation is key to bounding the clique number of $ G_r $.
We remark that with a more careful analysis taking the separators shown in \cref{fig:regular_k-gons} into account, it can be shown that actually all edges between distinct levels obey the ancestor-descendant relation of $ T $, which we omit here in favor of a more elegant argument.

\begin{figure}
    \centering
    \includegraphics{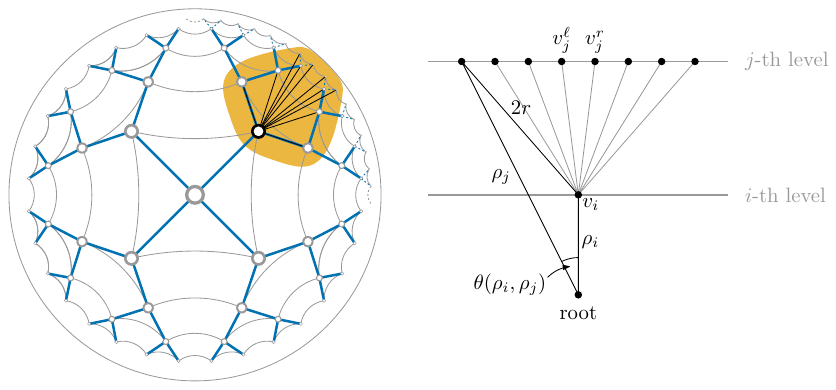}
    \caption{%
        Left: A spanning tree (blue) guiding the structure of $ G_r $, and the neighborhood of some vertex (black and orange).
        Note that the number of neighbors increases with the level, but slower than the number of descendants in the tree, whereas the angle in which a vertex has neighbors shrinks.
        Right: The number of neighbors a vertex $ v_i $ has in level $ j $ is determined by $ \theta(\rad_i, \rad_j) $, where $ \rad_i $ and $ \rad_j $ are the radii of the two levels.
        The angle $ \theta(\rad_i, \rad_j) $ is described by the shown triangle with side lengths $ \rad_i $, $ \rad_j $, and $ 2r $, which is captured by \cref{eq:max_angular_distance}.
    }
    \label{fig:Gr_neighbors}
\end{figure}

\begin{proof}
    Let $ v_i $ be some vertex in level $ i $.
    If $ i = j $, then $ v_i $ has exactly two neighbors in level $ j $ as every level induces a cycle, so assume $ i < j $.
    
    For the lower bound, consider the two vertices $ v^\ell_j, v^r_j $ in level $ j $ that have the smallest angular distance to $ v_i $ (\cref{fig:Gr_neighbors}).
    Observe that by symmetry, one of them is to the left of $ v_i $ and one to the right\footnote{The terms \emph{left} and \emph{right} should become clear from the picture.
      For a more formal definition, consider the line through the origin and $v_i$, oriented from the origin to $v_i$.
      This line splits the plane into a left and right half-plane.
      A neighbor of $v_i$ is a left or right neighbor depending on the half plane it lies in.}.
    By choice of $ v^\ell_j, v^r_j$, they are consecutive in the cycle induced by level $ j $, and thus are adjacent.
    To see that $ v^\ell_j, v^r_j$ are also adjacent to $ v _i $, consider a disk of radius $ 2r $ with center $ v^\ell_j $, which contains exactly the neighbors of $ v^\ell_j $, in particular $ v^r_j $ and the origin.
    Thus, the triangle formed by $ v^\ell_j, v^r_j $, and the origin is contained in the disk, and since $ v_i $ lies in this triangle, $ v_i $ is also contained in the disk.
    By symmetry, $ v^\ell_j, v^r_j $ are the two desired neighbors of $ v_i $ in level $ j $.

    For the upper bound, let $ \rad_i $ and $ \rad_j $ denote the radii of the vertices in level $ i $, respectively level $ j $.
    We count the number of left neighbors of $ v_i $ in level $ j $, which is determined by the angular distance $ \theta_{2r}(\rad_i, \rad_j) $ (\cref{eq:max_angular_distance}) such that two vertices with radius $ \rad_i $ and $ \rad_j $, respectively, have distance $ 2r $, and thus are adjacent.
    For brevity, we omit the subscript in the following and just write $\theta$ instead of $\theta_{2r}$.
    We refer again to \cref{fig:Gr_neighbors}.
    That is, we aim for an upper bound on $ \theta(\rad_i, \rad_j) $, which satisfies
    \begin{align*}
        \theta(\rad_i,\rad_j) 
        \overset{\labelcref{eq:max_angular_distance}}&{=}
            \arccos\left(\frac{\cosh(\rad_i)\cosh(\rad_j)-\cosh(2r)}{\sinh(\rad_i)\sinh(\rad_j)}\right) \\
        \overset{\text{Obs \labelcref{obs:sinh_rk}}}&{=} 
            \arccos \left(  
            \frac{\cosh(\rad_i)}{\sinh(\rad_i)}
            \frac{\cosh(\rad_j)}{\sinh(\rad_j)}
            - \frac{\cosh(2r)}{(\sinh(r))^2}
            \sin \left( \frac{\pi}{2^i} \right) 
            \sin \left(\frac{\pi}{2^j} \right)
        \right)
    \end{align*}
    by \cref{eq:max_angular_distance} and \cref{obs:sinh_rk}, the latter of which is applied to $ \sinh(\rad_k) $ for $ k = i,j $.
    We remark that, using the tightness of the bounds in \cref{lem:hyp-ineq}, it is not hard to see that upcoming inequalities we give are also asymptotically tight, yielding a tight upper bound.

    Since $ \arccos $ is monotonically decreasing, we aim to lower-bound its argument.
    The first part of the difference is lower-bounded by 1 due to \ineqref{eq:cosh/sinh}, and $ {\cosh(2r)}/{(\sinh(r))^2} $ is upper-bounded by $2.1$ with \ineqref{eq:cosh/sinh_2}, recall for the latter that $ r \geq 2 $.
    We conclude that
    \begin{align*}
        \theta(\rad_i,\rad_j) 
        \overset{ \ineqref*{eq:cosh/sinh,eq:cosh/sinh_2} }&{\leq} 
            \arccos\left( 
                1 - 2.1 \cdot \sin \left( \frac{\pi}{2^i} \right) 
                \sin \left(\frac{\pi}{2^j} \right) 
            \right) \\
        \overset{ \ineqref*{eq:arccos} }&{\leq }
            \pi \sqrt{
                \frac{2.1}{2} \cdot \sin \left( \frac{\pi}{2^i} \right) 
                \sin \left(\frac{\pi}{2^j} \right) 
            }\\
        &\leq \pi \sqrt{\frac{2.1}{2} \pi^2 \cdot 2^{-i} \cdot 2^{-j} }
        = \sqrt{\frac{2.1}{2}} \pi^2 \sqrt{2^{-(i+j)}},
    \end{align*}
    where the last inequality holds since $ \sin(x) \leq x $ for all $ x \geq 0 $.
    Note that we indeed may apply \ineqref{eq:arccos} since
    $ 
        2.1 \cdot \sin (\pi / 2^i) \sin(\pi / 2^j)
        \leq 2.1 \cdot \pi^2 \cdot 2^{-(i+j)}
        \leq 2.1 \cdot \pi^2 \cdot 2^{-(2+3)}
        \leq 2
    $.
    
    
    This immediately implies an upper bound on the number of left neighbors of $ v_i $ in level $ j $, and therefore also on the number $ \deg_j(i) $ of all neighbors in level $ j $.
    To see this, recall that the angular distance between two vertices in level $ j $ is $ 2 \pi / 2^j $.
    Hence, to have all neighbors within an angle of $ 2 \theta(\rad_i,\rad_j) $, we have
    \begin{equation*}
        \deg_j(i) 
        \leq \frac{2 \theta(\rad_i,\rad_j)}{2 \pi / 2^j} + 1
        \leq \frac{2 \sqrt{2.1/2} \pi^2 \sqrt{2^{-(i+j)}}}{2\pi/2^j} + 1
        = \sqrt{2.1/2} \pi \sqrt{2^{j-i}} + 1
        \leq 4 \sqrt{2^{j-i}},
    \end{equation*}
    where we recall for the last inequality that $ j - i \geq 1 $.
\end{proof}

Having understood which edges occur in our constructed graph $ G_r $, we can now derive a lower bound on the treewidth and an upper bound on the clique number.
The first is indeed easy to conclude from the lower bound of \cref{lem:Gr_neighbors}: 
Contracting each of the $ r $ levels yields an $ r $-clique since each two levels are connected by edges.

\begin{corollary}\label{cor:Gr_tw_logn}
    The treewidth of $ G_r $ is at least $ r \in \Omega(\log n) $, where $ n $ is the number of vertices of $ G_r $.
\end{corollary}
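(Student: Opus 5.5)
We exhibit a clique minor on $r+1$ vertices in $G_r$. Since a clique-minor of size $k$ forces treewidth at least $k-1$ (see the preliminaries), this gives $\tw(G_r) \ge r$. Combined with $n \in \Theta(2^r)$, it also gives $r \in \Omega(\log n)$.

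\textbf{Branch sets.} Contracting whole levels only yields $K_r$, since there are exactly $r$ levels $V_1, \dots, V_r$. We therefore split the outermost level in two. The $r+1$ connected branch sets are:
\begin{itemize}
\item the root, which forms $V_1$;
\item each level $V_k$ for $2 \le k \le r-1$, which is connected because it induces a cycle;
\item two disjoint arcs $B_1, B_2$ obtained by cutting the $2^r$-cycle on $V_r$ into two halves, each spanning an angular range of $\pi$ (up to one vertex spacing).
\end{itemize}

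\textbf{Adjacency between branch sets.}
\begin{itemize}
\item The root is adjacent to every vertex by \cref{lem:hypergrid_shudg}.
\item $B_1$ and $B_2$ are adjacent through the two cycle edges joining the ends of the arcs.
\item Any two levels $V_i, V_j$ with $2 \le i < j \le r-1$ are adjacent by the lower bound of \cref{lem:Gr_neighbors}.
\item Each $V_k$ with $2 \le k \le r-1$ is adjacent to both $B_1$ and $B_2$.
\end{itemize}

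\textbf{Main step: $V_k$ reaches both arcs.} This is the only step needing care. The proof of \cref{lem:Gr_neighbors} shows that a vertex $v \in V_k$ is adjacent to the two level-$r$ vertices of smallest angular distance to $v$. These lie within angle $2\pi/2^r$ of $v$.

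The vertices of $V_k$ are spaced by angle $2\pi/2^k \le \pi/2$, since $k \ge 2$. Hence the middle part of each half-circle arc, of angular width $\pi/2$, contains some vertex $v \in V_k$. Such a $v$ is at angular distance at least about $\pi/4$ from the ends of the arc. Since $\pi/4 > 2\pi/2^r$ for $r \ge 3$, both nearest level-$r$ neighbours of $v$ lie inside that arc. So $V_k$ sends an edge into $B_1$ and an edge into $B_2$.

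\textbf{Conclusion and degenerate case.} The $1 + (r-2) + 2 = r+1$ branch sets are pairwise adjacent and connected, giving a $K_{r+1}$ minor. For $r = 2$ the claim is immediate: the root together with two consecutive vertices of the $4$-cycle forms a triangle, so the treewidth is at least $2$.
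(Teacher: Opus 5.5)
Your proof is correct and uses the same idea as the paper, which contracts each of the $r$ levels (each a connected cycle, and pairwise joined by \cref{lem:Gr_neighbors}) to obtain a clique minor. The paper's $K_r$ minor by itself only gives $\tw(G_r) \ge r-1$, so splitting the outer level into two arcs to get $K_{r+1}$ is what actually gives the stated $\tw(G_r) \ge r$ (the difference is irrelevant for $\Omega(\log n)$). The inequality $\pi/4 > 2\pi/2^r$ fails with equality at $r=3$, but this is harmless: the nearest level-$r$ vertices sit at angular distance $\pi/2^r$, and one edge into each arc suffices.
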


\todo[inline]{We have a proof sketch that $ G_r $ is far from having product structure in the sense that there are many vertices having large treewidth in their neighborhood. But we may as well just leave it out. In case we include it, we might bring back the discussion at the end \cref{sec:lower_bound_tw_loglogn}}

Finally, we obtain an upper bound on the clique number from the upper bound of \cref{lem:Gr_neighbors}.
For this, we exploit that the number of neighbors a vertex on level~$ i $ has in level~$ j $ grows only with base~$ \sqrt{2} $, whereas the number of vertices in a level grows with base~2.
This enables us to conclude the following relation between vertices of triangles.
As any three vertices in a clique form a triangle, the following lemma is key to bounding the clique number.

\begin{lemma}\label{lem:Gr_triangle}
    For every triangle in $ G_r $ with vertices $ v_i $, $ v_j $, and $ v_k $ on levels $2 \leq i \leq j \leq k$, respectively, we have $ k - j \leq j - i + 6 $.
\end{lemma}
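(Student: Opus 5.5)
The plan is to compare angular distances. Let $\theta_{ab}$ denote the angular distance between $v_a$ and $v_b$ for $a,b\in\{i,j,k\}$. Since $v_j$ and $v_k$ are adjacent, the proof of \cref{lem:Gr_neighbors} gives
\[
\theta_{jk}\le \theta(\rad_j,\rad_k)\le c\sqrt{2^{-(j+k)}}, \qquad c=\sqrt{2.1/2}\,\pi^2 .
\]
The same bound gives $\theta_{ik}\le c\sqrt{2^{-(i+k)}}$. This alone does not constrain $k$ from above, so the key step is a lower bound on the angular separation coming from the discrete structure of the levels.

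\textbf{Step 1: rigid angular offsets between levels.} By the alignment of the polygons $A_k$, every vertex of level $a$ has an angle that is an odd multiple of $\pi/2^a$, up to a fixed global rotation. Indeed, the vertices of $A_a$ sit at the half-angles between consecutive perpendicular bisectors, and the bisectors of $A_a$ are refined from those of $A_{a-1}$. Consequently, for $a<b$, the angular distance between any vertex of level $a$ and any vertex of level $b$ is a nonzero odd-type offset of the form $|m\pi/2^a - m'\pi/2^b|$. In particular it is at least $\pi/2^b$. I would verify this from the bisector alignment by induction on the level.

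\textbf{Step 2: apply the offset bound to the pair $v_i,v_j$ when $i<j$.} Step~1 gives $\theta_{ij}\ge \pi/2^j$, which is useful only in the other direction. The better choice is to use the angular triangle inequality together with the pair $(v_i,v_k)$, since level $k$ is the finest level among the three. Step~1 then gives $\theta_{ik}\ge \pi/2^{k}$, which is too weak. So I will instead use the structure of level $j$ relative to level $i$. Because $v_i$ lies at an odd multiple of $\pi/2^i$, and $v_j$ is a vertex of the finer level $j$, we get $\theta_{ij}\ge \pi/2^j$.

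\textbf{Step 3: the main inequality, obtained from the adjacency of $v_i$ and $v_k$.} Both $v_j$ and $v_k$ must lie within the cones around $v_i$ (for $v_k$) and around $v_j$ (for $v_k$). Since $v_k$ is within $c\,2^{-(j+k)/2}$ of $v_j$ and $v_j$ is at least $\pi 2^{-j}$ away from $v_i$, we obtain
\[
\theta_{ik}\ \ge\ \pi 2^{-j} - c\,2^{-(j+k)/2}.
\]
Combining this with $\theta_{ik}\le c\,2^{-(i+k)/2}$ yields
\[
\pi 2^{-j}\le c\bigl(2^{-(i+k)/2}+2^{-(j+k)/2}\bigr)\le 2c\,2^{-(i+k)/2}.
\]
This gives $2^{(i+k)/2-j}\le 2c/\pi$, that is, $k-j\le j-i+2\log_2(2c/\pi)$. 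Since $2c/\pi\approx 6.4$, this is at most $j-i+6$, as claimed.

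\textbf{Main obstacle.} I expect the main obstacle to be Step~1/2, namely rigorously establishing that a level-$j$ vertex cannot lie at the same angle as a level-$i$ vertex, with separation at least $\pi/2^j$, from the bisector alignment. I would handle this by computing the angles explicitly: level $a$ vertices are at angles $(2t+1)\pi/2^a+\phi_0$. The case $i=j$ is trivial, since then the bound follows from the cycle structure, so it suffices to treat $i<j$.
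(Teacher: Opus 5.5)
Your proof is correct and is essentially the paper's argument. The paper combines the same three ingredients: the reach bounds from \cref{lem:Gr_neighbors} for $v_i$ and $v_j$ into level $k$, and the separation $\theta_{ij}\ge \pi/2^j$. It takes that separation ``by construction'', whereas you justify it via the odd-multiple positions of the levels. The paper then phrases the comparison as a vertex count, $|R_i|+|L_j|>n_k$, in level $k$ rather than through the angular triangle inequality. Your direct angular version avoids the rounding and any case analysis on where $v_k$ lies relative to $v_i$ and $v_j$, and it even gives the slightly better constant $2\log_2(2c/\pi)\approx 5.4$.
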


\begin{figure}
    \centering
    \includegraphics{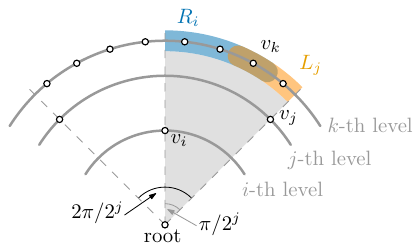}
    \caption{%
        A triangle of $ G_r $, where $ v_k $ lies both in the neighborhood of $ v_i $ (blue) and of $ v_j $ (orange).
        We conclude that the right half of the neighborhood of $ v_i $ plus the left half of the neighborhood of $ v_j $ is larger than the number of vertices in level $ k $ that lie between $ v_i $ and $ v_j $ (gray wedge).
    }
    \label{fig:Gr_triangle}
\end{figure}

\begin{proof}
    We make use of \cref{lem:Gr_neighbors} to obtain the number of neighbors $ v_i $ and $ v_j $ have in level $ k $ and compare the sum with the number of vertices in level $ k $ that lie between $ v_i $ and $ v_j $.
    Refer to \cref{fig:Gr_triangle} for this.
    Without loss of generality, we may assume that $ v_i $ is to the left of $ v_j $.
    Since $ v_i $ and $ v_j $ have a common neighbor in level $ k $, we have that the set $R_i$ of right neighbors of $ v_i $ intersects the set $L_j$ of left neighbors of $ v_j $ in level $ k $.
    Hence, we have $ |R_i| + |L_j| > n_k $, where $n_k$ denotes the number of vertices in level $ k $ lying between $ v_i $ and $ v_j $.
    
    By \cref{lem:Gr_neighbors}, we have $ |R_i| \leq 2 \sqrt{2^{k-i}} $ and $ |L_j| \leq 2 \sqrt{2^{k-j}} $.
    Since $ i \leq j \leq k $, we have $ |R_i| \geq |L_j| $, and thus we can bound the sum $ |R_i| + |L_j| \leq 2 |R_i| \leq 4 \sqrt{2^{k-i}} $.
    
    To bound $ n_k $, assume $ j < k $ as there is nothing to show otherwise.
    Recall that the angular distance between two vertices in level $ j $ is $ 2\pi / 2^j $, and thus the angular distance between $ v_i $ and $ v_j $ is at least $ \pi / 2^j $ by construction.
    Within this angle, we have at least $ (\pi / 2^j) / (2\pi / 2^k) = 2^{k-j}/2 $ vertices in level $ k $, where the denominator is the angular distance between two vertices in level $ k $.
    
    To sum up, we have 
    $ 2^{k-j}/2 \leq n_k < |R_i| + |L_j| \leq 4 \cdot 2^{(k-i)/2} $,
    implying $ k - j < (k - i)/2 + 3 $, which is equivalent to the desired inequality $ k - j < j - i + 6 $.
\end{proof}

It follows from \cref{lem:Gr_triangle} that if a clique in $G_r$ has vertices in levels $ i $ and $ j \geq i $ with $ \Delta = j - i $, then all further vertices of the clique in larger levels are in levels $ j, \dots, j + \Delta + 6 $.
Applying this to all pairs of vertices in a clique, we obtain that the distances between the levels shrink exponentially, where the \emph{distance between two levels $ i $ and $ j $} refers to $ |i - j| $.
We formalize this in the following lemma.

\begin{lemma}\label{lem:Gr_clique-levels}
    Let $ r \geq 1 $, $ c \geq 0 $ be integers, and let $ S \subseteq [r] $ be a set such that for every three elements $ i < j < k $ of $ S $, we have $ k - j \leq j - i + c $.
    Then $ |S| < \log_2(r) + c + 2 $.
\end{lemma}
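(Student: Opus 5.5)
Sort the elements of $S$ as $s_1 < s_2 < \dots < s_m$ and study the consecutive gaps $d_t = s_{t+1} - s_t \geq 1$ for $t \in [m-1]$. The plan is to show that, read backwards, these gaps grow at least geometrically up to an additive error of $c$. Since their sum is at most $r - 1$, the number of gaps is then logarithmic in $r$ plus $O(c)$.

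Apply the hypothesis to the triple $s_1 < s_t < s_{t+1}$ for each $2 \le t \le m-1$. This gives $s_{t+1} - s_t \le s_t - s_1 + c$, that is,
\[ d_t \le \sum_{u<t} d_u + c . \]
Choosing $i = s_1$ as the smallest element is the crucial choice: the gap just below $s_t$ need not be large, but the whole prefix $D_t := s_t - s_1 = \sum_{u<t} d_u$ is large. Rewriting, we get $D_{t+1} = D_t + d_t \le 2D_t + c$. This is an upper bound on growth, which runs in the wrong direction for bounding $m$.

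To get a lower bound instead, use triples whose \emph{largest} element is fixed. Take $k = s_m$ and apply the hypothesis to $s_t < s_{t+1} < s_m$. This yields
\[ s_m - s_{t+1} \le d_t + c . \]
Let $E_t := s_m - s_t$ be the suffix length. Then $E_{t+1} \le d_t + c = E_t - E_{t+1} + c$, so $E_t \ge 2E_{t+1} - c$. Setting $F_t := E_t - c$ gives $F_t \ge 2F_{t+1}$ whenever the hypothesis applies, namely for $t+1 \le m-1$.

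Now start from the top. We have $E_{m-1} = d_{m-1} \ge 1$, but $F_{m-1}$ may be nonpositive, so the doubling only becomes useful once $F$ is positive. This is the step I expect to need the most care. I will handle it by noting that $E_t$ strictly increases as $t$ decreases, so $E_t \ge m - t$, and in particular $F_t \ge 1$ for every $t \le m - c - 1$.

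From that index downward the doubling gives
\[ F_1 \ge 2^{(m-c-1)-1} . \]
Combined with $F_1 \le E_1 \le r - 1 < r$, this yields $m - c - 2 < \log_2 r$, i.e.\ $|S| = m < \log_2(r) + c + 2$.

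Finally, check the small cases, where $m \le c+2$ or there are too few triples; there the bound holds trivially because $\log_2 r \ge 0$. I also need to verify the index bookkeeping in the off-by-one so that the constant comes out exactly as $+2$, adjusting the starting index (e.g.\ using $F_t \ge 1$ for $t \le m-c-1$) if needed.
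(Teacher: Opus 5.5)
Your proof is correct, and the bookkeeping you flag works out. The triple $s_t<s_{t+1}<s_m$ is available exactly for $t\le m-2$, which gives $F_t\ge 2F_{t+1}$. Distinctness gives $F_{m-c-1}\ge (c+1)-c=1$. The chain from $t=m-c-2$ down to $t=1$ only uses indices $\le m-2$, so $2^{m-c-2}\le F_1\le r-1<r$ for every $m\ge c+2$.

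Two small corrections. Only $m\le c+1$ is genuinely trivial: for $m=c+2$ you need $\log_2 r>0$, not just $\ge 0$, and your main chain already supplies this with an empty doubling step. The opening paragraph on the prefix sums $D_t$ is a dead end and can be deleted, including the remark that choosing $i=s_1$ is crucial.

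The paper takes an extremal route instead.
\begin{itemize}
\item It writes down the greedy set $S^*$: the elements $r,\dots,r-c$ together with $r-c-2^a$ for $0\le a\le\lfloor\log_2(r-c-1)\rfloor$.
\item It checks that $S^*$ is valid by comparing consecutive elements with $r$.
\item It uses an exchange argument to show no valid set is larger: among maximum-cardinality valid sets it takes one with maximum element sum, and pushes that set's largest element disagreeing with $S^*$ upward.
\end{itemize}

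Both proofs rest on the same observation: the binding constraints are the triples whose top element is the maximum. You turn this directly into a doubling recurrence for the suffix lengths $s_m-s_t$. That is shorter and self-contained, and it avoids both identifying the extremal configuration and justifying the greedy and exchange steps.

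In return, the paper's route exhibits an explicit extremal set, so it shows the bound is tight. Your argument recovers the same sharp value if you keep the $-c$ at the end: $2^{m-c-2}\le F_1\le r-1-c$ gives $|S|\le c+2+\lfloor\log_2(r-c-1)\rfloor=|S^*|$.
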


\begin{proof}
    Let us first describe how an optimal set $ S^* $ looks like before arguing that this is indeed optimal.
    We choose the elements for $ S^* $ greedily, adding always the largest element such that $ k - j \leq j - i + c $ is maintained for any three elements $ i < j < k $ in~$ S^* $.
    We claim that $ S^* $ contains roughly the $ c $ largest elements and starting from there, leaves exponentially growing gaps when smaller elements are added.
    More formally, we define $ S^* $ to contain the elements $ r, \dots, r - c $ and the elements $ s_a^* = r - c - 2^a $ for $ a = 0, \dots, \lfloor \log_2(r - c - 1) \rfloor $, and then show that $ S^* $ indeed results from choosing the elements greedily.
    To show that $ S^* $ is a valid choice for $ S $, it suffices to compare each two consecutive elements with the largest element $ r $, where consecutive refers to two elements $ s, s' \in S^* $ such that there is no third element $ x \in S^* $ with $ s < x < s' $.
    Indeed, the largest $ c + 2 $ elements may be contained in $ S^* $ as the three elements $ r, r-c, r - c - 2^0 $ satisfy $ r - (r - c) = c \leq c+1 = (r-c) - (r-c - 2^0) + c $.
    Similarly, for $ a > 0 $, we have 
    $ r - s_{a-1}^* 
    = r - (r - c - 2^{a-1}) 
    \leq 2^{a-1} + c
    \leq 2^{a} - 2^{a-1} + c
    \leq (r - c - 2^{a-1}) - (r - c - 2^{a}) + c
    = s_{a-1}^* - s_{a}^* + c$.
    Since the largest $ c + 2 $ elements form an interval and for the $ s_a^* $, all inequalities are tight, none of the elements can be chosen larger without violating the requirements.
    Thus, $ S^* $ indeed is the result of a greedy approach.
    Moreover, the resulting set has size $ |S^*| = c + 1 + \lfloor \log_2(r - c - 1) \rfloor + 1 < \log_2(r) + c + 2 $.

    It remains to show that there is no larger set $ S $ meeting the requirements. 
    To do so, consider all subsets of $ [r] $ with maximum cardinality satisfying $ k - j \leq j - i + c $ and let $ S $ be the one that maximizes the sum of its elements.
    We show that $ S = S^* $, which means that $ S^* $ indeed has maximum cardinality.
    Assume for contradiction that $S \neq S^*$ and let $ t \in [r] $ be the largest integer such that $ S $ and $ S^* $ disagree about $ t $. 
    Since $ S^* $ is constructed greedily from large to small elements and the two sets agree on all larger elements, we have that if $ t \in S $, then also $ t \in S^* $.
    Hence, $ t \in S^* $ but $ t \notin S $.
    
    We finish the proof by moving some elements of $ S $ so that the sum gets larger, which contradicts the choice of $ S $.
    First, if $ t = r $, then add 1 to all elements of $ S $.
    This does not change the differences, and thus yields a valid set of the same cardinality whose sum is larger.
    So assume $ t < r $ and let $ t' \in S $ be the largest integer with $ t' < t $.
    Note that $ t' $ exists as $ S $ is of maximum cardinality and thus at least as big as $ S^* $.
    Now, we claim that the set $ S' $ obtained from $ S $ by replacing $ t' $ with  $t$ satisfies $ k - j \leq j - i + c $ for every $ i < j < k $.
    We only need to check triples containing $ t $.
    Also recall that we may assume $ k = r $ as this maximizes $ k - j $.
    First, for $ j = t $, the difference between $ i $ and $ j $ is now even larger, while the difference between $ j $ and $ k $ shrinks, so $ k - j = k - t < k - t' \leq t' - i < t - i = j - i $ for every $ i < t' < t$ in $ S $, and thus also in $ S' $.
    Second, if $ i = t $, then the $ i, j, k $ are all contained in $ S^* $ by the choice of $ t $ and thus satisfy the requirement for $ S' $.
    We conclude that $ S = S^* $ and therefore $ |S| = |S^*| < \log_2(r) + c + 2 $.
\end{proof}

Putting everything together, we conclude that the clique number of $ G_r $ is indeed $ O(\log \log n) $.

\begin{lemma}\label{lem:Gr_clique_loglogn}
    The clique number of $ G_r $ is at most $ O(\log r) = O(\log \log n) $, where $ n $ is the number of vertices of $ G_r $.
\end{lemma}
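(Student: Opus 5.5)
Let $C$ be a clique of $G_r$ and let $S \subseteq [r]$ be the set of levels containing a vertex of $C$. I would bound $|C|$ through $|S|$: first show that each level holds only a bounded number of vertices of $C$, then show that $S$ meets the hypothesis of \cref{lem:Gr_clique-levels} with a constant $c$.

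For the first step, level $1$ is just the root. Every level $k \geq 2$ induces a cycle of length $2^k \geq 4$, so a clique contains at most two vertices from it. This gives $|C| \leq 2|S|$, since the root contributes to level $1$ only.

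For the second step, take three elements $i < j < k$ of $S$ and choose vertices $v_i, v_j, v_k \in C$ in these levels. They are pairwise adjacent, so they form a triangle in $G_r$. If $i \geq 2$, then \cref{lem:Gr_triangle} gives $k - j \leq j - i + 6$. The case $i = 1$, where $v_i$ is the root, is not covered by \cref{lem:Gr_triangle}, which needs $i \geq 2$; the root is adjacent to everything, so no constraint on $j$ and $k$ follows from it. The simplest fix is to drop level $1$: set $S' = S \setminus \{1\}$ and apply \cref{lem:Gr_clique-levels} to $S'$, viewed inside $[r]$, with $c = 6$. Every triple in $S'$ has all levels at least $2$, so the condition holds, and we get $|S'| < \log_2 r + 8$.

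Combining the two steps,
\[ |C| \leq 1 + 2|S'| < 2\log_2 r + 17 = O(\log r). \]
Since $n \in \Theta(2^r)$, we have $r = \Theta(\log n)$ and hence $O(\log r) = O(\log\log n)$.

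The argument has no real obstacle. The only care needed is the boundary case of the root level, where \cref{lem:Gr_triangle} does not apply, and the observation that a level contributes at most two clique vertices because it induces a cycle of length at least $4$ with no chords.
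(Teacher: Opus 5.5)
Your proof is correct and follows the paper's own argument. You use at most two clique vertices per level, since each level $k \ge 2$ induces a chordless cycle. You then feed \cref{lem:Gr_triangle} with $c = 6$ into \cref{lem:Gr_clique-levels}, exactly as the paper does.

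Your separate treatment of level $1$ makes the argument more rigorous. \cref{lem:Gr_triangle} genuinely fails for $i = 1$: the root, a level-$2$ vertex, and any of that vertex's level-$k$ neighbours form a triangle for arbitrarily large $k$. The paper's proof passes over this case silently, and your fix costs only an additive constant.
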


\begin{proof}
    Consider a clique $ C $ in $ G_r $ whose size we aim to bound.
    First recall that each level induces a cycle and thus has clique number 2.
    Hence, up to a factor of 2, we may assume that the vertices of $ C $ are in pairwise distinct levels.
    Now each three vertices of $ C $ in levels $ i < j < k $ form a triangle and thus satisfy $ k - j \leq j - i + 6 $ by \cref{lem:Gr_triangle}.
    Then, \cref{lem:Gr_clique-levels} shows that $ C $ contains only $ 2 (\log_2(r) + 8) \in O(\log r) = O(\log \log n) $ vertices. 
\end{proof}
Finally, \cref{cor:Gr_tw_logn,lem:Gr_clique_loglogn} together prove \cref{thm:hypergrid}.

\subsection{Proof of \texorpdfstring{\cref{thm:sub-hypergrid}}{Theorem 9}}
\label{sec:lower_bound_tw_loglogn}
\todo{adjust theorem number in texorpdfstring}

The goal of this section is to prove \cref{thm:sub-hypergrid}, i.e., to construct SHUDGs with disk radius $ r \in \Theta(\log n) $, clique number $ O(1) $, and treewidth $ \Omega(\log \log n) $, as this implies \cref{main:noPS}.
We do this by taking a subgraph of the graph $ G_r $ for $ r \geq 8 $ constructed in \cref{sec:lower_bound_tw_logn} that has constant clique number but unbounded treewidth.
For this, let $ G_r' $ be the subgraph of $ G_r $ that is induced by the root and the vertices in levels $ 2^k $ for $ k = 3, \dots, \log_2(r) $.
Recall that $ G_r $ has $ r $ levels, where the $ k $-th level contains $ 2^k $ vertices for $ k \geq 2 $, and thus the $ r $-th level contains more than half of the vertices of $ G_r $.
Thus, $ G_r' $ has $ \Theta(\log r) = \Theta( \log \log n ) $ levels and $ \Theta(n) $ vertices, where $ n $ is the number of vertices of $ G_r $.
Further recall that in $ G_r $, and thus also in $ G_r' $, \cref{lem:Gr_neighbors} shows that each two levels are connected by edges.
Hence, we again obtain a clique-minor by contracting each level.

\begin{lemma}\label{lem:Gr'_tw_loglogn}
    The treewidth of $ G_r' $ is at least $ \Omega(\log r) = \Omega(\log \log n) $, where $ n $ is the number of vertices of $ G_r' $.
\end{lemma}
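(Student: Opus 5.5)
We build a clique-minor in $G_r'$ on the levels $2^3, 2^4, \dots, 2^{\lfloor \log_2 r \rfloor}$, then use that treewidth is minor-closed and that $\tw(K_m) = m-1$, as recalled in the preliminaries. Let $m = \lfloor \log_2 r \rfloor - 2$ be the number of these non-root levels; the root is not needed. For each such level $2^k$, the vertices $V_{2^k}$ induce a cycle in $G_r$. No edges were removed inside a level, because $G_r'$ is an induced subgraph containing whole levels, so each level is connected in $G_r'$ and we take $V_{2^k}$ as one branch set.

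Next we check that any two branch sets are adjacent. For levels $2 \le i < j \le r$, \cref{lem:Gr_neighbors} says every vertex in level $i$ has at least two neighbours in level $j$. These edges survive in $G_r'$ since both levels are kept and $G_r'$ is induced. So contracting each level yields $K_m$ as a minor, and $\tw(G_r') \ge m - 1 = \lfloor \log_2 r \rfloor - 3 \in \Omega(\log r)$.

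The remaining step, which is the only mildly delicate one, is the translation from $\log r$ to $\log\log n$ where $n = |V(G_r')|$. Level $2^k$ has $2^{2^k}$ vertices, so $|V(G_r')|$ is dominated by the top level $2^{\lfloor\log_2 r\rfloor}$. This gives $n \le 2\cdot 2^{r}+1$, hence $\log n \in O(r)$ and $\log\log n \le \log r + O(1)$. The bound $\Omega(\log r)$ therefore implies $\Omega(\log\log n)$, matching the statement.

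If $\log_2 r$ is not an integer, we simply interpret the index range up to $\lfloor \log_2 r\rfloor$, which changes nothing asymptotically. We need $r \ge 8$ to ensure $m \ge 1$.
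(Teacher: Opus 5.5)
Your proposal is correct and follows the paper's argument: each kept level induces a cycle and can be contracted, the lower bound of \cref{lem:Gr_neighbors} makes every two contracted levels adjacent, and the resulting clique minor on $\Theta(\log r)$ vertices bounds the treewidth from below. Your explicit bound $n \le 2\cdot 2^{r}+1$ is the only direction needed for $\Omega(\log r) \subseteq \Omega(\log\log n)$, and it holds even when $\log_2 r$ is not an integer, a case the paper's claim that $G_r'$ has $\Theta(n)$ vertices glosses over.
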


Recall that with the root being adjacent to all vertices, this already shows that $ G_r' $ does not admit product structure since graphs with product structure do not have vertices with large treewidth in their neighborhood by \cref{lem:tw_neighborhood}.
\thomas{remark for shortening: I think we can just drop the previous sentence.}%
That is, the hard part is again to bound the clique number, for which we heavily rely on our findings for $ G_r $.
For this, recall that the distance between two levels $ i $ and $ j $ refers to $ |i - j| $, and not, e.g., to the hyperbolic distance.

For the main idea, recall from \cref{lem:Gr_triangle,lem:Gr_clique-levels} that in $ G_r $, the distances between the levels used by a clique shrink exponentially from the root to the outermost level, i.e., most vertices of any clique are in the outer levels.
In contrast, we choose the levels for $ G_r' $ with exponentially growing distances, i.e., mostly inner levels are chosen.
Thus, for every clique in $G_r$, the levels chosen for $G_r'$ hit only very few vertices of the clique, which is shown in the following lemma.

\begin{lemma}\label{lem:Gr'_clique}
    The clique number of $ G_r' $ is at most $5$.
\end{lemma}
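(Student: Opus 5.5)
Take a clique $C$ in $G_r'$. The root is universal, so it contributes one vertex. Each level induces a cycle of length at least $2^{8} > 3$, so it has clique number $2$. Since $C$ is also a clique in $G_r$, \cref{lem:Gr_triangle} applies to every triple of its vertices. The goal is to show that the non-root vertices of $C$ lie in at most two distinct levels of $G_r'$, and that at most four of them remain after accounting for repeated levels.

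\emph{Three distinct levels are impossible.} Suppose $C$ has vertices in three distinct levels $i<j<k$. These are levels of $G_r'$, so $i=2^a$, $j=2^b$, $k=2^c$ with $3\le a<b<c$. \cref{lem:Gr_triangle} gives $k-j\le j-i+6$. But $k-j = 2^c-2^b \ge 2^b$, while $j-i+6 = 2^b-2^a+6 \le 2^b - 8 + 6 < 2^b$, because $2^a\ge 8$. This is a contradiction. The choice of levels starting at $2^3$ is exactly what makes the constant $6$ harmless.

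\emph{Levels used by only one or two vertices.} If $C$ uses at most two distinct levels, each contributes at most two vertices, giving $|C|\le 1+2+2=5$. If instead it uses three or more levels, the argument above already rules this out.

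\emph{Two vertices on the same level.} There is one remaining subtlety: \cref{lem:Gr_triangle} allows $i=j$ or $j=k$, and three vertices on three distinct levels were handled by picking one vertex per level. So the count is simply at most two levels times at most two vertices per level, plus the root, which gives the bound $5$. The main obstacle is just verifying that \cref{lem:Gr_triangle} applies with strict level distinctness, which it does since the lemma holds for $i\le j\le k$.
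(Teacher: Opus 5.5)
Your proof is correct and matches the paper's argument: discard the universal root, use that each level induces a cycle with clique number $2$, and rule out a triangle on three distinct levels $2^a<2^b<2^c$ via $2^c-2^b\ge 2^b>2^b-2^a+6$, which contradicts \cref{lem:Gr_triangle}. One small imprecision: \cref{lem:Gr_triangle} only covers vertices on levels $\ge 2$, so it applies to triples of non-root vertices rather than to every triple of $C$; that is in fact how you use it, and your last paragraph on the ``subtlety'' is redundant.
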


\begin{proof}
    The root, which we denote by $ v_1 $, is adjacent to all other vertices, so we show that the clique number of $ G_r' - v_1 $ is at most 4.
    Since each level induces a cycle, which has clique number 2, it suffices to show that every clique uses at most two levels.
    So, assume there is a triangle in $ G_r' - v_1 $ with vertices in pairwise distinct levels for the sake of contradiction.
    Let $ 2^i, 2^j, 2^k $ denote the levels of $ G_r $ corresponding to the three levels of the triangle in $ G_r' - v_1 $, where $ 3 \leq i < j < k $.
    On the one hand, the distance between the the larger two levels is $ 2^k - 2^j \geq 2^j $.
    On the other hand, we have $ 2^j - 2^i \leq 2^j - 2^3 < 2^j - 6 $.
    Together, this yields $ 2^k - 2^j \geq 2^j > 2^j - 2^i + 6 $,
    contradicting \cref{lem:Gr_triangle}, which shows that $ 2^k - 2^j \leq 2^j - 2^i + 6 $ for a triangle on levels $ 2^i, 2^j, 2^k $.
\end{proof}
This concludes the proof that hyperbolic uniform disk graphs do not admit product structure as \cref{lem:Gr'_tw_loglogn,lem:Gr'_clique} together show \cref{thm:sub-hypergrid}, which in turn proves \cref{main:noPS}.

%
%

\subsection{Dependence on the Disk Radius}
\label{sec:disk-radius}


We finish the section with a discussion on the disk radius.
The graph families we construct for \cref{thm:hypergrid,thm:sub-hypergrid} have disk radius $ \Theta(\log n) $.
In terms of product structure, this already shows that the class of (S)HUDGs does not have product structure, even when restricted to graphs with constant clique number.
However, it raises the natural question of whether this remains true when restricting the class further to larger or smaller disk radius.
In the following, we first discuss the regime where $r \in o(\log n)$.
Afterwards, we consider the regime where the disk radius $r$ is asymptotically larger than $\log n$.
In both cases, we extend the previous construction, giving lower bounds on the treewidth and row-treewidth, which proves the second part of \cref{main:rPS} and strengthens \cref{main:treewidth}.
In particular, we show that there is no product structure unless the disk radius is in $ O(1) $.
For the missing regime of $r \in O(1)$ (including radii shrinking with $n$), we show the contrary in Section~\ref{sec:upper_bound}, i.e., for such small radii, the resulting graph classes indeed have product structure.

%

\subparagraph{Smaller Disk Radius.}

For smaller disk radius $r$, we can adapt \cref{thm:hypergrid,thm:sub-hypergrid} by simply adding isolated vertices.
This increases the number of vertices while leaving the disk radius unchanged, so the disk radius grows slower with respect to the number of vertices.
Note that if $ r $ is super-constant the treewidth in the neighborhood of the root, and thus the row-treewidth, is still unbounded as it grows linearly, respectively logarithmically, with the disk radius $ r $ by \cref{cor:Gr_tw_logn,lem:Gr'_tw_loglogn}.
Similarly, the clique number does not change by adding isolated vertices and thus is still bounded by $ O(\log r) $, respectively $ O(1) $, by \cref{lem:Gr_clique_loglogn,lem:Gr'_clique}.

\begin{corollary}
    \label{cor:lower-bound-sub-log-radius}
    For every $r \in O(\log n)$, there are families of $n$-vertex HUDGs with disk radius in $\Theta(r)$, 
    \begin{itemize}
        \item clique number in $ O(\log r) $, and (row-)treewidth in $ \Omega(r) $, respectively
        \item clique number in $ O(1) $, and (row-)treewidth in $ \Omega(\log r) $.
    \end{itemize}

\end{corollary}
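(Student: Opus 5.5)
The plan is to pad the graphs $G_{r'}$ and $G'_{r'}$ from \cref{sec:lower_bound_tw_logn,sec:lower_bound_tw_loglogn} with isolated vertices. Fix a target function $r = r(n) \in O(\log n)$. We may assume $r$ is at least a large enough constant, since otherwise both claims are trivial: a single edge gives treewidth $\ge 1$. Set $r' = \lfloor r \rfloor$, or $\max\{8,\lfloor r\rfloor\}$ for the second family. Then $G_{r'}$ has $\Theta(2^{r'})$ vertices. Since $r \le C\log n$ for some constant $C$, we have $2^{r'} \le 2^{C\log n} = n^{O(1)}$, but this can exceed $n$ when $C>1$.

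\textbf{Step 1: choose the building block.} I would therefore use $r'' = \lfloor r/c\rfloor$ for a constant $c$ large enough that $\Theta(2^{r''}) \le n$. This keeps $r''\in\Theta(r)$. Then I add $n - |V(G_{r''})|$ isolated vertices, placed far apart: at pairwise distance, and distance from all existing points, greater than $2r''$. The hyperbolic plane is unbounded, so this is always possible. The result is an $n$-vertex HUDG with disk radius $r''\in\Theta(r)$.

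\textbf{Step 2: the clique bounds.} Isolated vertices create no new cliques. Hence \cref{lem:Gr_clique_loglogn} gives clique number $O(\log r'') = O(\log r)$ for the first family. For the second family, built from $G'_{r''}$ instead, \cref{lem:Gr'_clique} gives clique number at most $5$.

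\textbf{Step 3: treewidth and row-treewidth.} Treewidth is monotone under taking subgraphs, so the padded graph still has treewidth $\Omega(r'') = \Omega(r)$ by \cref{cor:Gr_tw_logn}, respectively $\Omega(\log r)$ by \cref{lem:Gr'_tw_loglogn}. For row-treewidth, note that the root is universal in $G_{r''}$ (\cref{lem:hypergrid_shudg}), so these large treewidths occur inside a single closed neighborhood. Suppose the padded graph is a subgraph of $H\boxtimes P$. Then $\tw(N[\text{root}]) \le 3(\tw(H)+1)-1$ by \cref{lem:tw_neighborhood}, so $\tw(H)\in\Omega(r)$, respectively $\Omega(\log r)$.

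The only point needing care is the calibration of $r''$: it must satisfy both $|V(G_{r''})|\le n$ and $r''\in\Theta(r)$, which is why the constant $c$ is chosen in Step 1. The remaining requirements, that isolated vertices can always be realized and that SHUDG-ness is not needed here, are immediate.
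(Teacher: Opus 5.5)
Your proposal is correct and follows the paper's argument. The paper pads $G_r$ (resp.\ $G_r'$) with far-away isolated vertices, keeps the clique bounds of \cref{lem:Gr_clique_loglogn,lem:Gr'_clique}, and gets the (row-)treewidth lower bound from the universal root via \cref{cor:Gr_tw_logn,lem:Gr'_tw_loglogn,lem:tw_neighborhood}. Your rescaling $r''=\lfloor r/c\rfloor$, which guarantees $|V(G_{r''})|\le n$ while keeping $r''\in\Theta(r)$, is a detail the paper leaves implicit, and you handle it correctly.
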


Note that this extends \cref{thm:hypergrid,thm:sub-hypergrid} to $ r \in o(\log n) $ at the cost of a worse lower bound on the row-treewidth in terms of $ n $ and of having only HUDGs instead of SHUDGs.
Let us briefly discuss why both restrictions are necessary.
First, in \cref{sec:upper_bound}, we give an upper bound on the row-treewidth depending on the disk radius.
Thus, if the disk radius tends to a constant, so does the row-treewidth.
Second, we need to consider HUDGs instead of SHUDGs here for the following reason.
Every SHUDG with radius $ r $ can be covered with $\max\{8, 2\pi e^r\}$ cliques \cite[Lemma~8]{blasius_strongly_2023}.
Thus, for a family $\calG$ of SHUDGs with disk radius $r \in o(\log n)$, the graphs in $\calG$ can be covered with $o(n)$ cliques, implying super-constant clique number.  Thus, to get constant clique number, we have to consider HUDGs instead of SHUDGs.

\subparagraph{Larger Disk Radius.}

We continue with the case where the disk radius $r$ grows super-logarithmically with $n$.
In this case, the construction of $G_r$ can be easily adapted by removing vertices of $G_r$ from the current outermost level until the desired dependency between the number of vertices $n$ and the disk radius $r$ is reached.
Note that with increasing $ r $, the number of remaining vertices still grows but slower than the $ \Theta(2^r) $ from the initial construction.
However, the number of vertices keeps growing exponentially with the number of levels.
The property of being a SHUDG is inherited both for the modified $ G_r $ and for its subgraph induced by the levels chosen for $ G_r' $.
The same holds for the upper bound on the clique size which is logarithmic in the number of levels, i.e., $ O(\log \log n) $, respectively constant.
Similarly, the treewidth is again the number of levels as they can be contracted to form a clique-minor.
This yields the following corollary.

\begin{corollary}
    \label{cor:lower-bound-super-log-radius}
    For every $r \in \Omega(\log n)$, there are families of $n$-vertex SHUDGs with disk radius in $\Theta(r)$, 
    \begin{itemize}
        \item clique number in $O(\log\log n)$, and (row-)treewidth in $\Omega(\log n)$, respectively
        \item clique number in $O(1)$, and (row-)treewidth in $\Omega(\log\log n)$.
    \end{itemize}

\end{corollary}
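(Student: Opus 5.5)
The plan is to take the construction $G_{r'}$ from \cref{sec:lower_bound_tw_logn} with a number of levels $L$ that is smaller than the disk radius. We then check that the levels can be truncated while keeping the disk radius at $r$.

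\textbf{Choice of parameters.} Given $n$ and a target radius $r \in \Omega(\log n)$, we set $L = \lfloor \log_2 n \rfloor$, or slightly less, so that the levels $2,\dots,L$ contain at most $n$ vertices. We place the root, and for $2 \le k \le L$ the regular $2^k$-gon $A_k$ with side length $2r$, aligned exactly as before. Vertices are adjacent iff their distance is at most $2r$. If fewer than $n$ vertices arise, we pad with vertices of the outermost used level; alternatively we use $\Theta(n)$ vertices, which suffices asymptotically.

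\textbf{Checking the properties.}
\begin{itemize}
\item \emph{SHUDG.} Since $L \le r$, \cref{lem:hypergrid_shudg} applies verbatim: $\rad_L \le \rad_r \le 2r$ by \cref{obs:sinh_rk}, because $\sinh(\rad_k)$ increases with $k$.
\item \emph{Neighbour counts.} \cref{lem:Gr_neighbors} was proved for all $2 \le i \le j \le r$ using only $r \ge 2$. So it holds for $j \le L$, and every two levels are joined by an edge.
\item \emph{Treewidth.} Contracting the levels gives a $K_L$-minor, so the treewidth is $\Omega(L) = \Omega(\log n)$.
\item \emph{Clique number.} \cref{lem:Gr_triangle} still holds. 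Applying \cref{lem:Gr_clique-levels} with ground set $[L]$ bounds the clique number by $O(\log L) = O(\log\log n)$.
\item \emph{Second family.} Restricting to the root and the levels $2^k$ for $3 \le k \le \log_2 L$ gives clique number at most $5$ by the argument of \cref{lem:Gr'_clique}. It gives treewidth $\Omega(\log L) = \Omega(\log\log n)$ by the minor argument of \cref{lem:Gr'_tw_loglogn}. This subfamily has only $\Theta(2^{L})$ vertices, so we choose $L$ with $2^L \approx n$ for it.
\end{itemize}

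\textbf{Row-treewidth.} The root is universal. Hence its closed neighbourhood is the whole graph, and \cref{lem:tw_neighborhood} turns the treewidth lower bound into an $\Omega(\cdot)$ bound on row-treewidth. Since treewidth upper-bounds row-treewidth (take $P$ a single vertex), the "(row-)treewidth" claim follows.

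\textbf{Main obstacle.} The delicate step is verifying that \cref{lem:Gr_neighbors} and \cref{lem:Gr_triangle} truly do not depend on $L = r$. In particular, the upper bound on $\theta$ uses $\sin(\pi/2^i)\sin(\pi/2^j)$ and the bound $\cosh(2r)/\sinh^2(r) \le 2.1$, both independent of the number of levels. I expect this to go through, but I would recheck each inequality with $r$ and $L$ kept separate.
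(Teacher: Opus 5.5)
Your proposal is correct and is essentially the paper's argument: the paper also keeps the disk radius $r$ and discards outer levels of $G_r$ until only about $\log_2 n$ levels remain. It likewise notes that the SHUDG property, the level-based clique bounds (via \cref{lem:Gr_triangle,lem:Gr_clique-levels,lem:Gr'_clique}) and the clique-minor from contracting levels are inherited, with the universal root passing the bound to row-treewidth via \cref{lem:tw_neighborhood}.

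Two things to tidy. First, $r \in \Omega(\log n)$ does not by itself give $L \le r$, but since only disk radius $\Theta(r)$ is required you can use disk radius $\max\{r, L\}$. Second, the second family actually has between roughly $2^{L/2}$ and $2^{L+1}$ vertices rather than $\Theta(2^L)$, which does not affect the $\Omega(\log\log n)$ bound.
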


This gives the same lower bounds for the row-treewidth as we have in the $ \Theta(\log n)$ setting.
We note that one might expect that the row-treewidth should grow with the radius as the graphs become less Euclidean.
However, we believe that $r \in \Theta(\log n)$ is a natural breaking point beyond which increasing $r$ further does not have big impact.
Specifically for our result, we remark that if $r $ is at least constant, then the treewidth, and thus also row-treewidth, is in $O(\omega \log n)$ for clique number $ \omega $~\cite{Struc_Indep_Hyper_Unifor_Disk_Graph-Blaesius25,Treew_Graph_Balan_Separ-DvoraNorin19}.
Thus, our lower bound of $\Omega(\log n)$ with $\omega \in O(\log\log n)$ is tight up to a factor of $\log\log n$, which leaves little room for improvement.

More generally, we are not even sure whether increasing the disk radius beyond $\Theta(\log n)$ even changes the graph class and would in fact conjecture that it does not.
The reasoning for this is as follows.
The main benefit of increasing the radius is that there is more space to fit non-adjacent vertices in the neighborhood of a single vertex.
However, $r \in \Theta(\log n)$ suffices for a star on $n$ vertices~\cite{Struc_Indep_Hyper_Unifor_Disk_Graph-Blaesius25}, i.e., having additional space does not help to place additional non-adjacent vertices in the neighborhood.
In contrast to this, the main disadvantage of increasing the disk radius seems to be that one can place less vertices inside a cycle that are not adjacent to the cycle.
However, when $r \in \Theta(\log n)$, it is already impossible to fit a single non-adjacent vertex inside a cycle of length $n$.
Thus, increasing $r$ beyond $\Theta(\log n)$ seems to neither allow for additional graphs nor exclude graphs that work for smaller disk radii.

\todo[inline]{At the moment, we mostly omit the tightness of our constructions. Do we want to discuss this? → include more on lower bounds}



\section{Product Structure for Small Disk Radius}
\label{sec:upper_bound}

\newcommand{\upperBound}{3^{8r}}


In this section, we show that hyperbolic uniform disk graphs whose disk radius and clique number are (sub-)constant admit product structure, which finishes the proof of \cref{main:rPS}.
In fact, we show that every HUDG $ G $ with disk radius $ r $ and clique number $ \omega $ is a subgraph of $ H \boxtimes K_k $, where $ H $ is a hyperbolic tiling and $ k \in O(\omega \cdot \upperBound) $, which emphasizes their grid-structure.
Since $ H $ is planar, and planar graphs are subgraphs of $ H' \boxtimes P \boxtimes K_3 $~\cite{dujmovic_planar_2019}, it follows that $ G \subseteq H' \boxtimes P \boxtimes K_{3k} $, where $ H' $ has treewidth $ 3 $ and $ P $ is a path.
We remark that with a more careful analysis, the treewidth of $ H' $ can be improved to 2.
\todo{maybe do this}%
Recall that this implies that for every such $ G $, there is a graph $ H'' $ of treewidth $ O(\omega \cdot \upperBound) $ such that $ G \subseteq H'' \boxtimes P $, namely $ H'' = H' \boxtimes K_{3k} $.
Thus, the row-treewidth of $ G $ is at most $ O(\omega \cdot \upperBound) $.
That is, in the following theorem the product $ H \boxtimes K_{O(\omega \cdot \upperBound)} $ is the strongest statement and the others follow.

\begin{theorem}\label{thm:upper_bound}
    For every HUDG $ G $ with disk radius $ r $ and clique size $ \omega $ it holds that $ G \subseteq H \boxtimes K_{O(\omega \cdot \upperBound)} \subseteq H' \boxtimes P \boxtimes K_{O(\omega \cdot \upperBound)}$, where $ H $ is a (possibly irregular) hyperbolic tiling and $ H' $ is a graph of treewidth at most $ 3 $.
    Moreover, the row-treewidth of $ G $ is in $ O(\omega \cdot \upperBound) $.
\end{theorem}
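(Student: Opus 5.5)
The plan is to follow the Euclidean argument of~\cite{dvorak_notes_2021}, with the square grid replaced by the tiling from \cref{lem:tiling}. Fix a HUDG $G$ with disk radius $r$ and clique number $\omega$, together with a realization, i.e., positions of the vertices in the hyperbolic plane such that two vertices are adjacent if and only if their distance is at most $2r$. Apply \cref{lem:tiling} with this $r$ to obtain a tiling $\mathcal T$. Every tile of $\mathcal T$ can be covered by $m \in O(3^{8r})$ disks of radius $r$, and any two points at distance at most $2r$ lie in the same tile or in adjacent tiles. Let $H$ be the dual graph of $\mathcal T$, with one vertex per tile and edges between tiles sharing a side. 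Assign each vertex of $G$ to a tile containing its position; points on tile boundaries are broken arbitrarily but consistently, say each is assigned to one of the tiles containing it.

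The first step bounds the number of vertices assigned to a single tile $t$. Cover $t$ by $m$ disks of radius $r$. Any two points in the same disk of radius $r$ are at distance at most $2r$ by the triangle inequality, so the vertices positioned in one such disk form a clique in $G$. Hence each disk contains at most $\omega$ vertices, and $t$ receives at most $k := m\omega \in O(\omega \cdot 3^{8r})$ vertices. Inside the block of tile $t$ in $H \boxtimes K_k$, we injectively map the vertices assigned to $t$ to the $k$ copies $(t, 1), \dots, (t, k)$.

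The second step checks that this map is a subgraph embedding. If $uv \in E(G)$, their positions are at distance at most $2r$. By the second property of \cref{lem:tiling}, the tiles containing them are equal or adjacent, which gives an edge of $H \boxtimes K_k$. One subtlety needs care. The lemma speaks of points lying in tiles, but a boundary point lies in several tiles. I would handle this by noting that the property holds for every choice of tiles containing the two points, which is what the lemma's statement literally guarantees. Failing that, I would perturb the positions generically so that no vertex lies on a tile boundary. This perturbation is harmless if done carefully: one can shrink the realization by a tiny amount while preserving adjacency, or apply the lemma with radius $r' = r + \epsilon$, which changes the constant only negligibly. Either way $G \subseteq H \boxtimes K_k$.

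The remaining inclusions are formal. $H$ is the dual of a tiling and hence planar; we can restrict to the finite subgraph spanned by the used tiles. By~\cite{dujmovic_planar_2019}, $H \subseteq H' \boxtimes P \boxtimes K_3$ with $\tw(H') \le 3$, so $G \subseteq H' \boxtimes P \boxtimes K_{3k}$, and $3k$ is still $O(\omega \cdot 3^{8r})$. For the row-treewidth, set $H'' = H' \boxtimes K_{3k}$; using $\tw(H' \boxtimes K_k) = (\tw(H')+1)k - 1$, this gives $\tw(H'') \le 4 \cdot 3k - 1 \in O(\omega \cdot 3^{8r})$ and $G \subseteq H'' \boxtimes P$. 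The main obstacle is hidden entirely in \cref{lem:tiling}, the construction of a tiling whose tiles are simultaneously small, coverable by $O(3^{8r})$ radius-$r$ disks, and fat enough that every $2r$-close pair stays within neighbouring tiles. Given that lemma, the only delicate point of this proof is the boundary convention just discussed.
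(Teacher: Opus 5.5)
Your proof is correct and follows the paper's argument essentially step by step: partition the vertices by the tiles of \cref{lem:tiling}, bound each part by $\omega$ times the number of covering disks, use the second property to get $G/\calP \subseteq H$, and then apply the planar product structure theorem with $H'' = H' \boxtimes K_{3k}$. Your boundary discussion corresponds to the paper's \enquote{without loss of generality no vertex lies on a tile boundary}, but two of your justifications need repair. Hyperbolic realizations cannot be \enquote{shrunk}, so the cleanest fix is to move the tiling by a generic isometry. If you instead use $r+\epsilon$ with perturbed positions, you must take $\epsilon$ and the perturbation small compared to the gap between $2r$ and the smallest non-edge distance, so that radius-$(r+\epsilon)$ disks still contain only cliques.
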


Note that \cref{thm:upper_bound} indeed implies the first part of \cref{main:rPS}
since the row-treewidth only depends on $ \omega $ and $ r $, and thus we obtain product structure for every graph class with (sub-)constant clique number and disk radius.
Further note that we also obtain an upper bound on the row-treewidth if clique number or disk radius are super-constant for some family of HUDGs.
However, in these cases the upper bound on the row-treewidth also grows and thus we do not obtain product structure, which is no surprise given \cref{sec:lower_bound}.


To find the product $ H \boxtimes K_k $, it is useful to interpret the multiplication with a complete graph in the language of quotients.
For this, recall that $ H \boxtimes K_k $ consists of copies of $ K_k $, one for each vertex of $ H $, that are joined by complete bipartite graphs according to the edges of $ H $.
This defines a partition \calP whose parts correspond to the copies of $ K_k $.
Then, the quotient $ (H \boxtimes K_k) / \calP $, i.e., the graph obtained by contracting each part of \calP, is $ H $.
Moreover, every graph also admitting a partition into parts of size at most $ k $ such that the quotient is (a subgraph of) $ H $ is a subgraph of $ H \boxtimes K_k $.
That is, to show that a graph $ G $ is a subgraph of $ H \boxtimes K_k $, we aim to find a partition \calP of $ G $ into parts of size at most $ k $ such that the quotient is $ H = G / \calP $.

This observation is also used by Dvořák, Huynh, Joret, Liu, and Wood~\cite{dvorak_notes_2021} to show that Euclidean unit disk graphs with constant clique number admit product structure.
Although they use a Euclidean $ \{ 4, 4 \} $-tiling, for our purposes it is more helpful to briefly sketch their argument using a Euclidean $ \{ 6, 3 \} $-tiling, i.e., three 6-gons meet at each vertex.
Given an embedding of the vertices of a EUDG $G$ with clique number $ \omega $, tile the Euclidean plane with a regular tiling into hexagons, and consider the resulting partition $\calP$ of the graph.
Now let $ Q = G / \calP $ be the quotient, i.e., the graph obtained by contracting the vertices in each hexagon.
By choosing the size of the hexagons carefully, a trade-off between two properties can be achieved: 
First, each hexagon can be covered by constantly many unit disks and thus contains $ O(\omega) $ vertices.
And second, adjacent vertices in $ G $ are in the same or in adjacent tiles, so the quotient $ G $ is a subgraph of the dual $ H $ of the hexagonal tiling, which admits product structure~\cite{dujmovic_planar_2019}.
Hence, $ G \subseteq Q \boxtimes K_{O(\omega)} \subseteq H \boxtimes K_{O(\omega)} $ also admits product structure.

We lift this approach to the hyperbolic setting with two major changes.
Not surprisingly, we replace the Euclidean tiling by a hyperbolic tiling to adjust to the geometry.
However, we also need to face different disk radii since, in contrast to \cite{dvorak_notes_2021}, we cannot freely choose the size of our tiles.
For large disk radii, we solve this by merging tiles so that we obtain an irregular tiling with larger tiles, whereas we subdivide tiles for small disk radii as needed.
To do so, we provide a geometric interpretation of what we aim for and discuss more detailed why this implies \cref{thm:upper_bound}.
Two tiles are called \emph{adjacent} if they are adjacent in the dual, i.e., if they share an edge.


\tiling*

Observe that this indeed implies that every HUDG with radius $ r $ and clique number $ \omega $ is a subgraph of $ H \boxtimes K_k $, where $ H $ is a (possibly irregular) tiling of the hyperbolic plane, namely the dual of the tiling from \cref{lem:tiling}, and $ k \in O(\omega \cdot \upperBound) $.
To see this, consider the partition \calP with one part for the vertices of each tile, where we may assume without loss of generality that no vertex lies on the boundary of some tile.
By the first property, each part contains at most $ O(\omega \cdot \upperBound) $ vertices since each disk of radius $ r $ covers a clique and thus at most $ \omega $ vertices.
Thus, we have $ G \subseteq Q \boxtimes K_k $ for the quotient $ Q = G / \calP $ and some $ k \in {O(\omega \cdot \upperBound)} $.
Then, by the second property, the endpoints of every edge lie in the same or in adjacent tiles, so the quotient $ Q $ is a subgraph of the dual $ H $ of the tiling from \cref{lem:tiling}.
Together, we obtain $ G \subseteq Q \boxtimes K_k \subseteq H \boxtimes K_k $, as required by \cref{thm:upper_bound}.

The main challenge of this section is to prove \cref{lem:tiling}, which we divide into two parts.
The first part handles large radii and starts with a regular $ \{7, 3\} $-tiling whose tiles are merged until the second property is satisfied.
Here, the tiles have a rapidly growing maximum degree to meet the requirements of the hyperbolic plane.
The case for small radii then starts with a regular $ \{ 4, 5 \} $-tiling, where we subdivide the tiles until they are small enough for the first property.
This results in a tiling that gets more Euclidean the smaller $ r $ is, which we discuss more detailed in \cref{sec:tiling_small_radius}.
The threshold between the two cases is $ d \approx 0.53 $, where $ 2d $ is the distance between two opposite sides of a tile in a $ \{ 4, 5 \} $-tiling; see \cref{lem:distances-in-tilings}.

\subsection{Tiling for \texorpdfstring{$ \bm{r \geq d} $}{r > d}}
\label{sec:tiling_large_radius}

%

We first prove \cref{lem:tiling} for $ r \geq c $, with $c$ being the circumradius of a tile in a regular $\{7, 3\}$-tiling.
The gap between $ d \approx 0.53 $ and $ c \approx 0.62 $ (see \cref{lem:distances-in-tilings}) is then closed at the end of the subsection.
Before we get into the details, we sketch the overall ideas.
We start with a $\{7, 3\}$-tiling (recall \cref{fig:prelim-tiling}), whose tiles we aim to merge into larger tiles satisfying the conditions of the lemma.
To distinguish between the two, we call the tiles of the $ \{ 7, 3 \} $-tiling \emph{small tiles}, and the tiles of our constructed tiling \emph{large tiles}.
Note that the condition $r \ge c$ ensures that each small tile can be covered with a disk of radius $r$.
Hence, for the first condition of \cref{lem:tiling}, we only need to make sure that not too many small tiles are merged into a large tile.
To meet the second condition, we have to construct our tiling so that crossing a large tile requires distance more than $2r$.
We refer to \cref{fig:merge_tilings_overview} for an illustration of the result.

\begin{figure}
    \centering
    \includegraphics{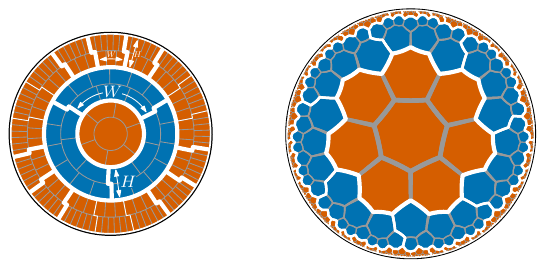}
    \caption{%
        To ensure that non-adjacent tiles are sufficiently far from each other, we merge 7-gons as indicated by the white lines.
        High-level, we define rings of height $ H \approx 2r $, which we cut into parts of width $ W \in O(3^{4r}) $ (left).
        Locally, however, we stick to the 7-gons from a $ \{ 7, 3 \} $-tiling (right).
    }
    \label{fig:merge_tilings_overview}
\end{figure}

To specify how we merge small tiles into large tiles, we consider BFS-layers in the dual $D$ of the $\{7, 3\}$-tiling.
Note that each vertex in $D$ corresponds to a small tile and recall from \cref{sec:preliminaries} that the dual $D$ of the $\{7, 3\}$-tiling is a $\{3, 7\}$-tiling; also see \cref{fig:3_7_tiling_large_radius}.
Now choose an arbitrary root vertex $r$ in $D$ and consider BFS-layers from $r$.
The $\ell$-th layer contains the vertices with distance $\ell$ from $r$.
Observe that each layer induces a cycle (except the $0$-th layer, which contains only the root).
Thus, every vertex $v$ in a layer $\ell \ge 1$ has one clockwise and one counter-clockwise neighbor on the same layer, which we call \emph{right} and \emph{left sibling}, respectively.
Moreover, $v$ has one or two consecutive neighbors in layer $\ell - 1$, which we call \emph{(left/right) parent}, and three or four consecutive neighbors in layer $\ell + 1$, which we call \emph{children}.
Observe that the left-most child of $v$ coincides with the right-most child of its left sibling.
A path in $D$ is called \emph{horizontal path} if it contains only vertices from a single layer.
It is a \emph{vertical path} if it contains at most one vertex from every layer.
If $u$ and $v$ are the endpoints of a vertical path with $u$ having smaller layer, then $u$ is an \emph{ancestor} of $v$ and $v$ is a \emph{descendant} of $u$.
The \emph{left/right-most} descendant and ancestor on a fixed layer are defined in the canonical way.

\begin{figure}
    \centering
    \includegraphics{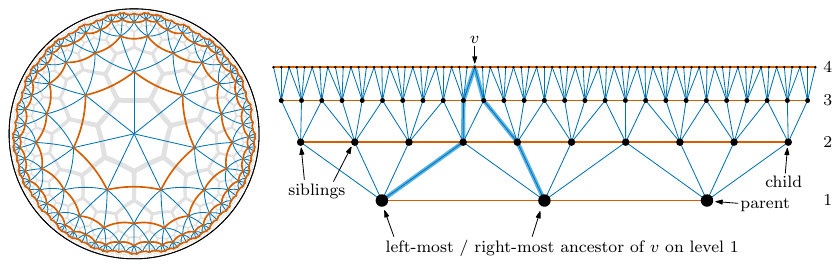}
    \caption{Left: Illustration of the $\{3, 7\}$-tiling (dual of the $\{7, 3\}$-tiling) with colors indicating the BFS layers.
      Right: Parts of the $\{3, 7\}$-tiling with BFS-layers as horizontal lines.
      The two highlighted paths are vertical paths.
      Paths consisting of only red edges are horizontal paths.}
    \label{fig:3_7_tiling_large_radius}
\end{figure}

With this, we can now specifying the large tiles; see Figure~\ref{fig:3_7_tiling_large_radius_large_tiles}.
We start by defining \emph{rings} of height $H$ where the $k$-th ring consists of the BFS-layers $H \cdot k, \dots, H \cdot (k + 1) - 1$ for $k \ge 0$.
The $0$-th ring alone forms one large tile.
For larger $k$, we split the $k$-th ring into pieces, each of which again forms a large tile.
For this, consider for each $k \ge 1$ the smallest BFS-layer of the $k$-th ring, i.e., the $(H \cdot k)$-th layer.
Now cut the cycle induced by this layer into paths consisting of $W$ vertices each.
If the length of the cycle is not divisible by $W$, we get one longer path with up to $2\cdot W$ vertices.
Then, we assign each of the remaining vertices of the $k$-th ring to the path containing its ancestor(s).
For now, each of these paths together with its assigned vertices forms a large tile (we need to merge few of the large tiles in the same ring in a moment).

\begin{figure}
    \centering
    \includegraphics[page=2]{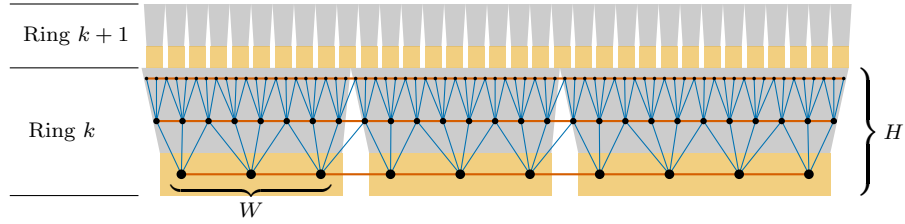}
    \caption{Each ring consists of the same number $H$ of layers.
      The paths consisting of $W$ vertices in the smallest layer of the $k$-th ring are highlighted in orange (the last path is slightly longer as 10 is not divisible by 3).
      The assignment of the other vertices in the ring is illustrated in gray.
      The paths together with the assigned vertices form the large tiles.
      As the bottom layer of the $(k + 1)$-st ring contains much more vertices, there are more paths and thus more large tiles.}
    \label{fig:3_7_tiling_large_radius_large_tiles}
\end{figure}

Concerning the assignment to paths, note that a vertex $v$ from a layer larger than $H \cdot k$ can have multiple ancestors in layer $H \cdot k$.
For tie-breaking, we in this case favor the right-most ancestor of $v$ in layer $H \cdot k$.
We will see in a moment that $v$ has at most two ancestors in each layer, so this does not make a big difference; also see \cref{fig:3_7_tiling_large_radius_large_tiles}.

Our plan now is the following.
First, based on $H$ and $W$, we can count the number of small tiles that are merged into a large tiles, giving an upper bound on the number of disks needed to cover a large tile.
Secondly, we have to show that non-adjacent large tiles are distant; see \cref{fig:tiling_large_r_merging_large_tiles}.
For this, we show that two large tiles separated by a ring are sufficiently far apart if $H$ is large enough.
We show the same for non-adjacent large tiles on the same ring if $W$ is sufficiently large.

\begin{figure}
    \centering
    \includegraphics{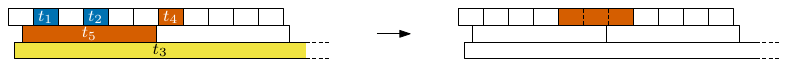}
    \caption{Illustration of the different types of non-adjacent large tiles. Two blue tiles ($t_1$, $t_2$) are distant if $W$ is sufficiently large.
      A blue tile and a yellow tile ($t_3$) are distant if $H$ is large enough.
      The red tiles ($t_4$, $t_5$) are non-adjacent but maybe too close.
      We fix this by merging three consecutive large tiles as highlighted right (which makes the too close tiles adjacent).}
    \label{fig:tiling_large_r_merging_large_tiles}
\end{figure}

After that, there only remain non-adjacent large tiles in neighboring rings.
For this, we have to adjust the tiling slightly, because right now, two non-adjacent tiles (not sharing an edge) could share a vertex, allowing for arbitrarily close points.
To resolve this, consider two vertices $u$ and $v$ in the outer-most layer of the $(k - 1)$-st ring (layer $H \cdot k - 1$) that are adjacent but belong to different large tiles.
Then $u$ and $v$ are siblings and thus have a common child in layer $H \cdot k$ that belongs to the $k$-th ring.
Let $w$ be this child.
We merge the large tile containing $w$ with the large tile to its left and right in the same ring, i.e., the actual large tile consists of a triple of the original large tiles.

This concludes the construction and we can now follow the plan outlined above, starting with counting the number of small tiles in large tiles large tiles.
For this, the following observation will be helpful.

\begin{lemma}
    \label{lem:3_7_tiling_exponential_growth}
    Let $P$ be a horizontal path of $k$ vertices in layer $\ell \ge 1$ of the $\{3, 7\}$-tiling.
    The vertices of $P$ have between $2k + 1$ and $3k + 1$disjoint children.
\end{lemma}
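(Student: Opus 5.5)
The plan is a counting argument on the layered structure of the $\{3,7\}$-tiling. We first determine the parent/child counts per vertex, and then account for children shared between consecutive vertices of $P$.

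\emph{Step 1: local structure of a layer.} We prove by induction on $\ell$ the structural facts stated just before the lemma. Every vertex $v$ in layer $\ell \ge 1$ has degree $7$. Its neighbors split into two siblings, one or two consecutive parents in layer $\ell-1$, and the remaining $7 - 2 - p(v)$ neighbors in layer $\ell+1$, where $p(v) \in \{1,2\}$ is its number of parents. So $v$ has four children if it has one parent and three children if it has two parents.

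Faces are triangles, so the neighbors of $v$ in cyclic order form a cycle. Hence consecutive children of $v$ are adjacent, and the extreme children are shared with the siblings. Concretely, the right-most child of $v$ equals the left-most child of its right sibling. Planarity guarantees that no other child is shared. The base case is layer $1$, whose vertices each have the root as unique parent.

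\emph{Step 2: counting.} Let $P = v_1,\dots,v_k$ in left-to-right order, and let $p_i = p(v_i)$. The children of $v_i$ number $5 - p_i$. Consecutive $v_i, v_{i+1}$ share exactly one child, and non-consecutive vertices share none. Since every child of a vertex of $P$ lies in layer $\ell+1$, the number of distinct children is
\[ \sum_{i=1}^k (5-p_i) - (k-1) = 4k+1 - \sum_{i=1}^k p_i . \]
Since $1 \le p_i \le 2$, this value lies between $2k+1$ and $3k+1$, as claimed. We read \enquote{disjoint children} as the number of distinct children, i.e.\ counting shared ones once. If $P$ wraps around the whole cycle of the layer, the shared-child count changes by one. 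We only need the lemma for proper paths, so we restrict to that case, or it is excluded by the path having distinct endpoints.

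\emph{Main obstacle.} The delicate part is Step 1, specifically verifying rigorously that two layer-$\ell$ vertices share a child only if they are siblings. The same point underlies the claim that each layer induces a cycle and that $p(v) \le 2$.

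We would argue this by induction using planarity. The union of layers $0,\dots,\ell$ forms a disk whose boundary is the layer-$\ell$ cycle. Layer $\ell+1$ is then the cycle of vertices adjacent to this boundary, lying in the annulus outside it. A vertex $w$ in layer $\ell+1$ has its layer-$\ell$ neighbors consecutive around $w$. These neighbors cannot number three or more: in that case the middle one would have only one child and only siblings besides, violating its degree count. Hence they form a single vertex or a sibling pair.

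Carrying out this induction carefully, together with the degree constraint and $1/3+1/7<1/2$ ensuring that no layer degenerates, is where the real work lies.
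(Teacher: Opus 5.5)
Your argument is correct and is the same as the paper's. Each vertex of $P$ has $5-p(v)\in\{3,4\}$ children, and each of the $k-1$ consecutive pairs shares exactly one, so the count $4k+1-\sum_i p_i$ lies in $[2k+1,3k+1]$. The paper simply takes the layer structure as given, whereas you also sketch why it holds.

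One small slip: a horizontal path through every vertex of its layer still has distinct endpoints, but those endpoints are siblings sharing a further child. So that case is excluded only by assuming $P$ misses some vertex of its layer, not by distinct endpoints. The stated bounds happen to survive there anyway.
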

\begin{proof}
    Each vertex of $P$ has three or four children.
    As every two consecutive vertices in the path share one child, we get at least $2k + 1$ and at most $3k + 1$ disjoint children.
\end{proof}

Now we can count the number of small tiles in a large tile.

\begin{lemma}
    \label{lem:tiling_large_r_count_small_tiles}
    Each large tile contains at most $O(3^H \cdot W)$ small tiles if $H \ge 3$.
\end{lemma}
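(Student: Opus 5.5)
We bound the size of each large tile by counting layer by layer, using \cref{lem:3_7_tiling_exponential_growth}. There are three kinds of large tiles: the $0$-th ring, an ordinary tile of the $k$-th ring ($k \ge 1$), and a tile obtained by merging three consecutive ordinary tiles.

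\textbf{The $0$-th ring.} The root has $7$ neighbors in layer~$1$. Each layer $\ell \ge 1$ induces a cycle, so we view it as a path and apply \cref{lem:3_7_tiling_exponential_growth}; its children number at most $3|L_\ell| + 1$. Hence $|L_\ell| \le 7 \cdot 4^{\ell}$ crudely, or $O(3^\ell)$ if we use that the children of a whole cycle number at most $3|L_\ell|$. I would keep the sharper count, since consecutive vertices share children around the cycle. The ring has $H$ layers, so it contains $O(3^H)$ small tiles, which is $O(3^H \cdot W)$.

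\textbf{An ordinary tile for $k \ge 1$.} It arises from a horizontal path $P_0$ of at most $2W$ vertices in layer $Hk$, together with all vertices of layers $Hk+1, \dots, Hk+H-1$ assigned to it. First I show by induction that the vertices assigned to $P_0$ in layer $Hk+t$ form a horizontal path $P_t$. Each vertex is assigned to its right-most ancestor in layer $Hk$, and descendant sets of consecutive vertices are contiguous intervals, so the descendants of a horizontal path are again horizontal and consecutive.

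Next I show that $P_{t+1}$ is contained in the children of $P_t$. Every vertex $v$ assigned to $P_0$ has a parent that also has an ancestor in $P_0$. The delicate point is the tie-breaking: we need some parent of $v$ to be assigned to $P_0$ itself. I would argue that the right-most ancestor of $v$ in layer $Hk$ is reached through its right parent. If that fails, I would use the weaker statement that $P_{t+1}$ is contained in the children of $P_t$ plus one extra path vertex, which only adds $O(1)$ per layer.

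\textbf{Summing.} With \cref{lem:3_7_tiling_exponential_growth} the recurrence is $|P_{t+1}| \le 3|P_t| + O(1)$. This gives $|P_t| \le 3^t (|P_0| + O(1)) = O(3^t W)$. Summing over $t < H$ gives $O(3^H W)$ for an ordinary tile, and merging three of them changes only the constant. The hypothesis $H \ge 3$ should be needed only to make the ring structure and the geometric sums well defined, for instance so that the tiles of a ring are nondegenerate. I expect it to play no major role in the counting itself.

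\textbf{Main obstacle.} The main obstacle is the structural claim that the assigned sets are horizontal paths, each contained in the children of the previous one. This requires understanding how right-most ancestors behave in the $\{3,7\}$-tiling, in particular that every vertex has at most two parents and that its parents are consecutive siblings. The latter is stated in the setup and can be used directly.
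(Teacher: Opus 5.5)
\textbf{Gap: the merging of triples.} Your layer-by-layer count for an ordinary (unmerged) tile is essentially the paper's argument and is fine. A vertex's right-most ancestor is the right-most ancestor of its right parent. So the assigned vertices in layer $Hk+t+1$ lie among the children of the assigned path in layer $Hk+t$, and \cref{lem:3_7_tiling_exponential_growth} gives the factor~$3$.

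The gap is the sentence ``merging three of them changes only the constant.'' The merging is not one isolated operation per tile. For \emph{every} boundary between large tiles in layer $Hk-1$, one takes the common child $w$ in layer $Hk$ and merges the tile containing $w$ with both its neighbours, and all these merges act together. If the tiles containing two such $w$'s are at most two positions apart along the ring, the triples overlap and fuse transitively. If this happens all around the ring, an entire ring, whose size grows with $k$, could become a single large tile, and no bound $O(3^H W)$ would survive. So you must prove that the triples are pairwise disjoint, i.e., that every final large tile is a union of at most three ordinary tiles. Your argument uses only \emph{upper} bounds on the expansion, and these cannot give this.

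\textbf{Where $H \ge 3$ is used.} This is exactly where the hypothesis enters, contrary to your guess that it plays no role in the counting. The paper uses the \emph{lower} bound in \cref{lem:3_7_tiling_exponential_growth}: a path $P$ has at least $2|P|+1$ children, of which at least $2|P|$ stay in the same tile. Hence an ordinary tile of ring $k-1$, which has at least $W$ vertices in its bottom layer, has at least $2^{H-1}W$ vertices in its top layer. It therefore has at least $2^H W$ children in layer $Hk$. Each ordinary tile of ring $k$ contains at most $2W$ vertices of layer $Hk$, so these children meet at least $2^{H-1} \ge 4$ tiles of ring $k$. Consequently, the tiles containing the merge centres of consecutive boundaries of ring $k-1$ are at least three positions apart along the ring. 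The triples centred at them are then disjoint, and your bound for ordinary tiles is multiplied by at most~$3$.
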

\begin{proof}
    We first count the number of small tiles in a large tile that is not one of the merged triples.
    The tile's path in the smallest layer contains $W$ vertices.
    Due to \cref{lem:3_7_tiling_exponential_growth} the path has at most $3W + 1$ children in the next layer.
    Due to the tie-breaking, the right-most of these children does not belong to the same large tile, yielding at most $3W$ neighbors in the next layer inside the large tile.
    Iterating this argument yields that the large tile contains at most $\sum_{i = 0}^{H - 1} 3^i \cdot W = (3^H - 1) / 2 \cdot W \in O(3^H \cdot W)$ small tiles.

    The lemma directly follows if the merging of triples only merges disjoint triples.
    Note that this is the case if each large tile in the $(k - 1)$-st ring is adjacent to at least four tiles in the $k$-th ring.
    Consider a large tile in the $(k - 1)$-st ring.
    As in the argument above, it has $W$ vertices in the path in the bottom layer.
    Due to \cref{lem:3_7_tiling_exponential_growth}, these have at least $2W + 1$ children in the next layer, at least $2W$ of which belong to the same tile (again, we loose the $+1$ as the right-most child belongs to the neighboring large tile).
    Thus, in its top layer, the large tile has at least $2^{H - 1}\cdot W$ vertices, yielding at least $2^H \cdot W$ neighbors in the bottom layer of the $k$-th ring.
    Separating them into paths of up to $2W$ vertices split them into at least $2^{H - 1}$ large tiles.
    As $H \ge 3$, this yields at least four adjacent large tiles, which concludes the proof.
\end{proof}

It remains to lower-bound the distance between non-adjacent large tiles.
As arguing about geometric distances between points in tiles of the $\{7, 3\}$-tiling is rather tedious, it is more convenient to translate between these geometric distances and graph-theoretic distances of the dual $\{3, 5\}$-tiling.
The following lemma justifies this.

\begin{lemma}\label{lem:geometric_graph_distances}
    Consider a regular $ \{ 7, 3 \} $-tiling in $ \Hyp^2 $ and let $ D $ be its dual.
    Then for each two vertices of the dual whose (graph-theoretic) distance in $ D $ is at least $d \geq 1 $, the (geometric) distance between any two points in $ \Hyp^2 $ lying in the corresponding tiles is at least $ (d - 1) / 2 $.
\end{lemma}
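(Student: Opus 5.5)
We argue by contraposition, via a chain of tiles. Take points $p \in t$ and $q \in t'$, where $t$ and $t'$ are the small tiles corresponding to dual vertices $u$ and $u'$, and let $\delta$ denote the hyperbolic distance between $p$ and $q$. The plan is to show that $u$ and $u'$ have graph distance at most $2\delta + 1$ in $D$. Rearranging gives $\delta \ge (d-1)/2$ whenever the graph distance is at least $d$.

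To do this, consider the geodesic segment $\gamma$ from $p$ to $q$, which has length $\delta$. It passes through a sequence of small tiles $t = s_0, s_1, \dots, s_m = t'$. Consecutive tiles in this sequence share either an edge or just a corner.

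\begin{itemize}
  \item If two consecutive tiles share an edge, their dual vertices are adjacent in $D$.
  \item If they share only a corner, then, since exactly three heptagons meet at each corner, the two tiles are distinct tiles at a common vertex of degree $3$. Any two of these three tiles share an edge. So they are again adjacent in $D$, and no extra detour is needed.
\end{itemize}

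(After a small perturbation of $q$ one may also assume $\gamma$ avoids corners; a limit argument then covers the boundary case.) Either way, the dual graph distance from $u$ to $u'$ is at most $m$, the number of tile changes along $\gamma$.

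The main step is to bound $m$ in terms of $\delta$. The key claim is that a geodesic segment of length less than some constant $\lambda$ meets at most a constant number of tiles. I would take the following concrete route.

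\begin{itemize}
  \item Split $\gamma$ into $\lceil \delta / \lambda \rceil$ pieces of length at most $\lambda$.
  \item Show that each piece enters at most two new tiles.
  \item Then $m \le 2\lceil \delta/\lambda \rceil$, and a good choice of $\lambda$ yields $m \le 2\delta + 1$.
\end{itemize}

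A clean way to obtain such a bound is through the inradius. By \cref{lem:distances-in-tilings}, the $\{7,3\}$-tiling has circumradius $\approx 0.62$, and I would compute its inradius $r_1$ with the same lemma. A geodesic can cross an edge of a heptagon and then leave through another, non-adjacent edge of that heptagon only after travelling some definite distance $\mu>0$, namely the minimum distance between non-adjacent edges of a tile. Exits through an adjacent edge are close to a corner, where the three-tiles-per-corner structure again bounds the number of tile changes.

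The hard part is getting the constant right: we need at most $2$ tile changes per unit length, plus one. I would verify this with a geometric lemma stating that \emph{any segment of length at most $1/2$ meets at most two tiles that are pairwise adjacent}, or equivalently that its endpoints lie in equal or adjacent tiles. This holds provided the distance between any two non-adjacent heptagons exceeds $1/2$. That distance is realised near a shared neighbour tile, where two non-adjacent tiles come closest, for example across a heptagon edge of length $\ell$ with $\cosh(\ell/2)=\cos(\pi/7)/\sin(\pi/3)$, i.e.\ $\ell \approx 0.57$, or via the corner configuration. I would compute this minimum explicitly using hyperbolic trigonometry in the relevant triangle, and expect it to be at least $1/2$.

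Granting this lemma, cut $\gamma$ into $\lceil 2\delta \rceil$ pieces of length at most $1/2$. The endpoints of each piece lie in tiles at dual distance at most $1$. Hence $\operatorname{dist}_D(u,u') \le \lceil 2\delta\rceil \le 2\delta+1$, which is exactly the claim.
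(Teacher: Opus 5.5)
Your final argument is correct and is essentially the paper's proof. You cut the segment into $\lceil 2\delta\rceil$ pieces of length at most $1/2$ and use that the endpoints of each piece lie in equal or adjacent heptagons, because non-adjacent tiles are at distance at least the side length $\ell\approx 0.566>1/2$; the paper takes the same steps of length $1/2$ as an induction on $d$ and justifies them by the same side-length estimate. The earlier tile-counting route is unnecessary, and so is the claim that a short segment meets at most two tiles, which is false near a corner, where it can meet three pairwise adjacent tiles; only the endpoint version is used.
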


\begin{proof}
    Let $x$ and $y$ be two points in the hyperbolic plane.
    Let $v_x$ be the tile of the $\{7, 3\}$-tiling that contains $x$ (we assume without loss of generality that $x$ lies in the interior of a tile).
    Analogously, let $v_y$ be the tile containing $y$.
    We need to show that if the graph-theoretic distance between $v_x$ and $v_y$ in the dual $D$ is at least $d$, then the geometric distance between $x$ and $y$ is at least $(d - 1) / 2$.
    We show the contraposition, i.e., if the geometric distance between $x$ and $y$ is at most $(d - 1) / 2$, then the graph-theoretic distance between $x$ and $y$ is at most $d$.

    We show this by induction over $d$.
    For $d = 1$, the claim is obviously true.
    For larger $d$ let $x'$ be the point on the segment between $x$ and $y$ with distance $1/2$ from $x$ and let $v_x'$ be the corresponding tile.
    First observe moving form $x$ to $x'$ shrinks the geometry distance to $y$ by $1/2$, i.e., the geometric distance between $x'$ and $y$ is at most $(d - 2) / 2$.
    Thus, by induction, the graph-theoretic distance between $v_x'$ and $v_y$ in $D$ is at most $d - 1$.
    It remains to show that $v_x'$ is a neighbor of $v_x$ in $D$, i.e., that two points $x$ and $x'$ with geometric distance $1/2$ are either in the same or in neighboring tiles.
    For this, we note that the side length of the regular $7$-gons in a $\{7, 3\}$-tiling is roughly $0.56$\footnote{For the side length $\ell$, it holds that $\cosh(\ell / 2) = \cos(\pi / 7) / \sin(\pi / 3)$.}.
    Thus, even if $x$ lies at the corner of its tile, a point $x'$ at distance $1/2$ must lie in the same or an adjacent tile.
\end{proof}

Due to the connection in \cref{lem:geometric_graph_distances}, we can now give lower bounds for geometric distances between tiles in the $\{7, 3\}$-tiling by giving lower bounds for graph-theoretic distances in its dual $\{3, 7\}$-tiling $D$.
When referring to distances in the following, we always mean the graph-theoretic distance.
The following lemma provides a canonical way of how shortest paths look like in $D$, which helps arguing about distances in~$D$.

\begin{lemma}
    \label{lem:3_7_tiling_shorest_paths}
    Let $u$ and $v$ be two vertices of the $\{3, 7\}$ tiling.
    If there is a vertical $u$-$v$-path, then it is a shortest path.
    Otherwise, there is a shortest $u$-$v$-path of the form $u, \dots, u', \dots, v', \dots, v$ such that the subpaths $u, \dots, u'$ and $v', \dots, v$ are vertical and $u', \dots, v'$ is horizontal of length at most $2$.
    Moreover, $u'$ is the right-most ancestor of $u$ and $v'$ the left-most ancestor of $v$ or vice versa.
\end{lemma}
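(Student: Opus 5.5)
We take a shortest $u$-$v$-path $Q$ in the $\{3,7\}$-tiling $D$ and normalize it by local exchanges, using only the structure of the BFS-layers from the root: every layer $\ell\ge 1$ induces a cycle, every vertex has one or two consecutive parents and three or four consecutive children, and the left-most child of a vertex is the right-most child of its left sibling. Assume without loss of generality that $u$ lies in a layer $\ell_u \le \ell_v$. Every edge of $D$ changes the layer by at most one, so every $u$-$v$-path has length at least $\ell_v-\ell_u$. A vertical path has exactly this length, which proves the first claim at once.

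For the second claim, the first structural step is the \emph{unimodality} of shortest paths: some shortest path first only decreases in layer (possibly with horizontal steps), then stays horizontal, then only increases. To get there, we remove local patterns such as up--horizontal--down or down--horizontal--up. The key local fact is that two siblings in a layer $\ell$ share a child. If $Q$ goes from layer $\ell$ up into layer $\ell-1$, walks horizontally, and comes back to layer $\ell$, then the corresponding descendants in layer $\ell$ form a horizontal walk that is at most as long. The reason is that consecutive vertices in layer $\ell-1$ have adjacent children, and by \cref{lem:3_7_tiling_exponential_growth} children spread at least by a factor of two. Arguing the same way one layer further up shows that a horizontal segment in layer $\ell$ of length $m\ge 3$ can be replaced by a route through layer $\ell-1$ of length at most $m$. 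Indeed, $m+1$ consecutive vertices in layer $\ell$ have ancestors in layer $\ell-1$ spanning roughly $m/2+1$ vertices, so going up one layer, across, and down one layer costs about $m/2+2\le m$.

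Iterating this shortening, the horizontal part of a shortest path can be pushed up until it has length at most $2$. The remaining vertical pieces are chosen as follows. From $u$ we go up along ancestors to the layer of the horizontal segment. Since a vertex has at most two ancestors per layer (to be established by induction from the parent structure), the reachable ancestors form a horizontal interval of length at most one. To minimize the horizontal gap, we take the extreme ancestor facing $v$: if $v$ lies to the right, that is the right-most ancestor $u'$ of $u$, and on the other side the left-most ancestor $v'$ of $v$; otherwise the roles are mirrored. Choosing any other ancestor can only lengthen the horizontal segment, which gives the final claim.

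The main obstacle is the bookkeeping in the exchange step. We must show that replacing a horizontal run of length at least $3$ by an up--across--down detour never increases the length. This needs the precise fact that ancestors of a horizontal interval of length $m$ span at most $\lceil m/2\rceil+1$ vertices one layer up, which follows from every vertex having at least three children and siblings sharing exactly one child. We first prove this inclusion property, that the ancestor sets of horizontal intervals are again intervals that shrink by a factor of at least $2$ per layer. With it, both the unimodality argument and the bound of $2$ on the horizontal length reduce to a counting inequality.
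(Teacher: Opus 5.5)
Your plan matches the paper's: normalize a shortest path by local exchanges, push the horizontal part toward the root, then choose extreme ancestors. Two steps fail as written.

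\textbf{The unimodality step runs in the wrong direction.} You propose to eliminate up--horizontal--down detours. You claim that a detour from layer $\ell$ into layer $\ell-1$ and back can be replaced by a horizontal walk in layer $\ell$ that is at most as long. This is false. A vertex with one parent has four consecutive children, and its left-most and right-most child have distance $2$ via that parent but need a horizontal walk of length $3$. Your own justification (children spread by a factor of at least two) says that deeper horizontal walks are \emph{longer}. Up--across--down is exactly the shape the lemma asks for.

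What must be excluded are the consecutive edge pairs (increase, decrease), (increase, same) and (same, decrease), where increase and decrease refer to the layer index. The last pair is what makes $u,\dots,u'$ vertical, and your unimodal form still allows horizontal steps where the layer decreases. Some of these exchanges only preserve length. For example, $x,y,z$ with $y$ a sibling of $x$ and $z$ a parent of $y$ becomes $x,w,z$, where $w$ is the common parent of $x$ and $y$. So ``iterating this shortening'' needs a potential to terminate. The paper fixes, among all shortest paths, one minimizing the sum of the layers of its vertices; every exchange strictly decreases this sum.

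\textbf{The counting for the bound $2$ breaks at $m=3$, the decisive case.} Your ``precise fact'' that the parents of $m+1$ consecutive vertices span at most $\lceil m/2\rceil+1$ vertices is false for even $m$. For $m=0$, a single vertex can have two parents. For $m=2$, the three children of a vertex with two parents have three distinct parents. Even granting the fact, going up, across the whole span and down costs $\lceil m/2\rceil+2$, which is $4>3$ for $m=3$. So you only exclude horizontal parts of length at least $4$.

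The missing idea is not to cross the whole parent span. For a horizontal subpath $v_1,v_2,v_3,v_4$, let $x$ be the right-most parent of $v_1$ and $z$ the left-most parent of $v_4$. By this choice, the right-most child of $x$ lies strictly right of $v_1$, and symmetrically for $z$. Hence every vertex strictly between $x$ and $z$ has all its children in $\{v_2,v_3\}$, contradicting that every vertex has at least three children. So either $x=z$, which gives a shorter path, or $x$ and $z$ are siblings and $v_1,x,z,v_4$ has the same length and smaller layer sum.

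Your final step choosing extreme ancestors is fine. Using the at-most-two-ancestors property there is legitimate, since its proof does not rely on this lemma.
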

\begin{proof}
    Note that if there is vertical $u$-$v$-path, then this is obviously a shortest path.
    Now consider a shortest $u$-$v$-path $P$ such that the sum of layers of vertices in $P$ is minimal among all shortest $u$-$v$-paths.
    Assume we move from $u$ to $v$ along $P$ and observe how the layer changes.
    This yields three types of edges, where the layer either increases, decreases, or stays the same; see Figure~\ref{fig:3_7_tiling_shortest_path} (left).
    Thus, for two consecutive edges, there are 9 combinations of edge types.
    We want to show that $P$ is the concatenation of three subpaths $P_1$, $P_2$, $P_3$ with $P_1$ and $P_3$ being vertical and $P_2$ being horizontal.
    Note that this is the case if and only if every pair of consecutive edges in $P$ are of the same type (3 combinations), a change from decrease to same (1 combinations), a change from same to increase (1 combination), or, if $P_2$ is just one vertex, a change from decrease to increase (1 combination).
    To show that the other three combinations cannot occur, let $xy$ and $yz$ be the two edges.
    For (increase, decrease), there clearly is an edge $xz$ as $x$ and $z$ are both parents of $y$, which means that they are siblings; see Figure~\ref{fig:3_7_tiling_shortest_path} (center).
    Thus, this cannot happen as $P$ is a shortest path.
    For (increase, same), assume without loss of generality that $z$ is the right sibling of $y$.
    Then $x$ is either directly connected to $z$ (which again contradicts minimality of $P$), or the right sibling $y'$ of $x$ is adjacent to $z$; again see Figure~\ref{fig:3_7_tiling_shortest_path} (center).
    Thus, replacing the two edges with $xy'$ and $y'z$ yields a path so the same length with lower layer sum; a contradiction to the assumption that this sum is minimal in $P$.

    \begin{figure}
        \centering
        \includegraphics{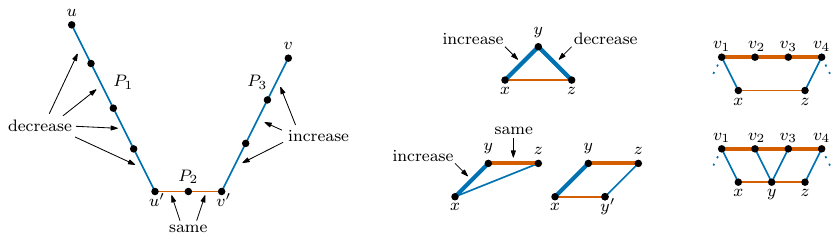}
        \caption{Illustrations for \cref{lem:3_7_tiling_shorest_paths}.
          Left: Partition of $P$ into vertical paths $P_1$, $P_3$ and the horizontal path $P_2$.
          Center: Combinations of edges that cannot occur on the path $P$, which is highlighted in bold.
          Right: $P_2$ has length at most $2$.}
        \label{fig:3_7_tiling_shortest_path}
    \end{figure}

    We next show that $P_2$ has length most $2$; see Figure~\ref{fig:3_7_tiling_shortest_path} (right).
    Assume for contradiction that $P_2$ is $u' = v_1, \dots, v_k = v'$ for $k > 3$ and assume without loss of generality that $P_2$ goes from left to right.
    Let $x$ be the only or right parent of $v_1$ and let $z$ be the only or left parent of $v_4$.
    If $x$ and $z$ are siblings, then we can replace $v_1, \dots, v_4$ with $v_1, x, z, v_4$ and obtain a path of the same length with lower layer sum.
    Thus, there has to be a vertex $y$ between $x$ and $z$ on the same layer.
    However, $y$ can only have the children $v_2$ and $v_3$ as it would otherwise be the right parent of $v_1$ or the left parent of $v_4$; a contradiction to the fact that every vertex as $3$ or $4$ children.

    It remains to show that $u'$ ($v'$) are the left-most (right-most) ancestors of $u$ ($v$) or vice versa.
    Assume without loss of generality that $P_2$ goes from left to right (or if $u' = v'$, then the child of $u'$ in $P_1$ is to the left its child in $P_3$).
    In this case, if $u'$ is not the right-most ancestor of $u$, we can replace the path $P_1$ by iteratively going from $u$ to its right (or only) child until it hits the layer of $u'$ or the path $P_3$ to obtain a shorter path.
    Analogously, it follows that $v'$ is the left-most ancestor of $v$.
\end{proof}

Observe that unless the shortest path in \cref{lem:3_7_tiling_shorest_paths} goes through the root, it provides a clear direction, which lets us use the terms left and right to describe the relative position of arbitrary vertices.

Also observe that the shortest paths behave similar to paths in a rooted tree in that we basically go up to something like a lowest common ancestor.
Except here, the parent is not always unique and the lowest common ancestor forms a path instead of a single vertex.
The next lemma strengthens this similarity to trees by showing that sometimes having two parents does not lead to a much larger number of ancestors.

\begin{lemma}
    \label{lem:3_7_tiling_at_most_two_ancestors}
    Every vertex of the $\{3, 7\}$-tiling has a most two ancestors in each layer.
\end{lemma}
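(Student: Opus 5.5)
We prove the statement by induction on the layer distance, and the key invariant is that the set of ancestors of a vertex $v$ in any fixed layer is a horizontal path of at most two consecutive vertices, i.e., either a single vertex or a pair of siblings.
For the base case, consider the parents of $v$ (the ancestors one layer up). By definition these are one or two consecutive neighbors in the previous layer, so they form a horizontal path on at most two vertices.

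For the induction step, assume the ancestors of $v$ in layer $\ell$ are either a single vertex $x$ or two siblings $x,y$ with $x$ left of $y$. The ancestors in layer $\ell-1$ are exactly the union of the parents of these vertices.
\begin{itemize}
\item In the single-vertex case, the ancestors in layer $\ell-1$ are just the parents of $x$, so there are at most two.
\item In the two-vertex case, we use that $x$ and $y$ are siblings with a common child (indeed $y$'s left-most child equals $x$'s right-most child). We claim $x$ and $y$ share a parent, so the union of their parents has at most three vertices: $x$'s left parent, the shared parent, and $y$'s right parent.
\end{itemize}

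Three ancestors is still too many, so the real work is to exclude the configuration in which $x$ has two parents and $y$ has two parents simultaneously.

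To exclude it, we exploit that in layer $\ell$ the vertices $x$ and $y$ are adjacent siblings that are both ancestors of $v$. I would first restrict which vertices can carry two ancestors. Each vertex of a layer $\ge 1$ has degree $7$: two siblings, one or two parents, and three or four children. Hence a vertex has two parents iff it has three children, and one parent iff it has four children.

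The next step is to characterise the vertices $v$ whose ancestors in layer $\ell$ are two siblings. I would show by induction that this happens only when $v$ lies on the boundary between the child-sets of $x$ and $y$, in the sense that every vertex on the vertical paths between $x$ and $v$ is a right-most child of its predecessor on the $x$ side. In particular the second ancestor arises only from a shared child. If $x$ and $y$ both had two parents, then $x,y$ would both have three children, and their children would interleave with the three parents above. I would then count around the common parent $p$ of $x$ and $y$: the neighbourhood of $p$ in layer $\ell$ consists of consecutive children that include $x$ and $y$. I plan to derive a contradiction from the degree-$7$ count at $p$ together with the fact that two-parent vertices cannot be adjacent in the same layer. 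The latter follows because consecutive two-parent vertices would force a parent with only the two shared children, which contradicts the fact that every vertex has at least three children, exactly as in the argument of \cref{lem:3_7_tiling_shorest_paths} bounding $P_2$.

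With that, the pair $x,y$ contains at most one two-parent vertex, so the ancestors in layer $\ell-1$ are at most two consecutive vertices, which closes the induction.

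\textbf{Main obstacle.} The main obstacle is the lemma that no two siblings both have two parents, i.e., two-parent vertices are never adjacent in a layer. I would try to prove it first, via the counting around the common parent described above, before running the induction.
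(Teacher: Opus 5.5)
Your proof is correct and is essentially the paper's. In both, the decisive step is that two siblings with common parent $w$ cannot each have a further parent on opposite sides of $w$, since $w$ would then have only two children; your induction on the whole ancestor set is a set-valued version of the paper's tracking of the left-most and right-most parent chains.

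Three clean-ups are worth making. First, drop the planned characterisation of $v$ via right-most children and the ``degree-$7$ count at $p$''. Neither is needed, and the characterisation is false as stated: a private child of the common child of $x$ and $y$ still has ancestor set $\{x,y\}$ but is not a right-most child. Second, justify that $x$ and $y$ share a parent by the triangular face on the inner side of the edge $xy$, not by their common child.
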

\begin{proof}
    Let $v$ be a vertex of the $\{3, 7\}$-tiling.
    Note that we reach the left-most ancestor of $v$ on a different layer by always choosing the left parent, when there are two parents to choose from.
    Let $P_\ell$ be the resulting path.
    Analogously, let $P_r$ be the path reaching the right-most ancestor on the same layer.
    We show that $P_\ell$ and $P_r$ do not diverge too much, i.e., on a fixed layer, they either use the same vertex or siblings.
    As both paths start in $v$, this invariant is true initially.
    Now consider one of the later steps from one layer to the next.
    If $P_\ell$ and $P_r$ share the same vertex $u$ in the previous layer, then $u$ either has one parent or it has two parents that are siblings.
    Both cases maintain the invariant.
    Otherwise, $P_\ell$ and $P_r$ use two different vertices $u_\ell$ and $u_r$ that are siblings.
    As each face is a triangle, $u_\ell$ and $u_r$ need to have a common parent $w$ on the next layer.
    If both of them had an additional parent, one to the left and one to the right of $w$, then $w$ would have only two children.
    As every vertex including $w$ has at least three children, one of the paths $P_\ell$ or $P_r$ includes $w$ and the other either also goes through $w$ or through its left or right sibling.
\end{proof}

Now we are ready to show that pairs of non-adjacent large tiles are far apart.
We start with large tiles separated by a ring, continue with large tiles from the same ring, and finally consider tiles in adjacent rings.
Recall that we consider graph-theoretic distances, i.e., the distance between large tiles refers to the minimum graph-theoretic distance between included small tiles in the dual $D$.

\begin{lemma}
    \label{lem:tiling_large_r_non_adj_rings}
    Two large tiles from non-adjacent rings have distance at least $H + 1$.
\end{lemma}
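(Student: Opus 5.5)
Two large tiles lie in rings $k_1$ and $k_2$ with $k_2 \ge k_1 + 2$, so every small tile of the first lies in a BFS-layer of index at most $H(k_1+1) - 1$ and every small tile of the second in a layer of index at least $H k_2 \ge H(k_1+2)$. The plan is to bound distances in $D$ from below by layer differences, using only the fact that BFS-layers come from a breadth-first search from the root. For any edge $xy$ of $D$, the layers of $x$ and $y$ differ by at most one, since BFS distances from a fixed vertex change by at most one along an edge.

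Take a shortest path in $D$ from a small tile $u$ of the first large tile to a small tile $v$ of the second. Along this path the layer index climbs from at most $H(k_1+1)-1$ to at least $H(k_1+2)$. Each edge raises the layer by at most one, so the path has at least $H(k_1+2) - (H(k_1+1)-1) = H+1$ edges. Hence the graph-theoretic distance between the two large tiles is at least $H+1$.

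I should check that the later merging of triples of large tiles does not break this. Merging happens only among large tiles in the same ring, so every large tile, merged or not, is still contained in a single ring, and the layer argument applies unchanged. There is no real obstacle here. \cref{lem:3_7_tiling_shorest_paths} is not needed, since the Lipschitz property of BFS layers suffices. The only care needed is indexing the ring boundaries correctly, with ring $k$ consisting of layers $Hk, \dots, H(k+1)-1$ and ring $0$ treated the same way.
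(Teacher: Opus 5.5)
Your proof is correct and follows the paper's argument. The paper says any path between the two tiles must contain at least $H$ vertices of an intermediate ring plus its two endpoints; your count, based on BFS layers changing by at most one along each edge, is that same argument made explicit. Your check that the triple-merging keeps each large tile inside a single ring is a detail the paper leaves implicit.
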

\begin{proof}
    Clearly any path from between tiles in non-adjacent rings contains at least $H$ vertices from a ring separating them plus tow additional vertices for the first and last vertex.
    Thus, the shortest path contains at least $H + 2$ vertices, yielding a distance of at least $H + 1$.
\end{proof}

We continue with the case of non-adjacent tiles in the same ring.

\begin{lemma}
    \label{lem:tiling_large_r_same_ring}
    Two non-adjacent large tiles from the same ring have distance at least $2\log_3(W) - 2$.
\end{lemma}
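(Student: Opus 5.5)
Take two non-adjacent large tiles $t_1, t_2$ in the $k$-th ring and a shortest path $Q$ in $D$ between small tiles $u \in t_1$ and $v \in t_2$. By \cref{lem:3_7_tiling_shorest_paths}, $Q$ can be chosen as a vertical path up from $u$ to an ancestor $u'$, a horizontal path of length at most $2$ to $v'$, and a vertical path down to $v$. The plan is to project everything to the bottom layer $H\cdot k$ of the ring, where the large tiles are horizontal paths of $W$ (up to $2W$) consecutive vertices, possibly merged into triples. There are two cases depending on whether $u'$ and $v'$ lie inside the ring, i.e., on a layer $\ge H\cdot k$.

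\textbf{Case 1: $Q$ stays in the ring.} Let $a$ be the right-most ancestor of $u$ and $b$ the left-most ancestor of $v$ on layer $H\cdot k$. By the tie-breaking rule and \cref{lem:3_7_tiling_at_most_two_ancestors}, these lie in (or immediately beside) the bottom paths of $t_1$ and $t_2$. Since the tiles are non-adjacent, at least one full bottom path of some third tile, so at least $W$ vertices, lies strictly between $a$ and $b$ on layer $H\cdot k$. Let $h$ be the number of layers $Q$ climbs from its lowest endpoint to the layer of $u', v'$. Iterating \cref{lem:3_7_tiling_exponential_growth} upwards, the set of descendants of a horizontal path grows by a factor of at most $3$ (plus an additive constant) per layer. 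Hence the horizontal extent of $u'\dots v'$ (at most $3$ vertices), together with at most two ancestors per layer, covers at most $O(3^{h})$ vertices on the relevant layer. These must span the $\ge W$ vertices between $a$ and $b$. I would arrange the comparison on the layer of $u$ and $v$ and push the gap of $W$ down from layer $H\cdot k$, using that a gap of $W$ between descendant sets only grows going down (lower bound $2$ per vertex). This gives $3^{h+1} \gtrsim W$ for each of the two vertical legs, i.e.\ $h \ge \log_3 W - 1$. Summing the two legs yields length $\ge 2\log_3(W)-2$.

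\textbf{Case 2: $u'$ or $v'$ lies below layer $H\cdot k$.} Then $Q$ leaves the ring and $Q$ is only shorter than in Case 1 if its horizontal part is very far down. I would bound this case the same way: the vertical legs must cross the whole relevant span of the ring. Going below the ring only shrinks horizontal extents (by factor $\ge 2$ per layer upward from the ring's perspective), so the horizontal part must cover the projected gap via ancestors. The same counting then forces $\log_3 W$ layers per leg. Alternatively, if $H$ is large relative to $\log_3 W$, the leg length is already $\ge H$.

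The main obstacle is the counting step in Case 1: making precise that the descendants (on a fixed layer) of a horizontal path of length at most $3$ plus the two-ancestor fuzziness span at most $c\cdot 3^{h}$ vertices, while the separating tile contributes $\ge W$ between the relevant ancestors. The merged triples only enlarge tiles, so they can only help. For this counting I would work with the left-most and right-most ancestor paths $P_\ell, P_r$ from the proof of \cref{lem:3_7_tiling_at_most_two_ancestors}, which stay within one sibling of each other, and track the interval they bound layer by layer.
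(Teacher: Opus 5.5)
Your argument is essentially the paper's. It uses the same ingredients: the canonical shortest path of \cref{lem:3_7_tiling_shorest_paths}, widths changing by a factor of at most $3$ per layer (\cref{lem:3_7_tiling_exponential_growth}), and the $W$-vertex bottom path $P_S$ of a separating tile. The only differences in route are that you let the apex's descendant interval grow away from the root instead of letting the gap $W_i \ge W/3^i - 1/2$ shrink toward it, and that you push the gap down to the shallower endpoint's layer rather than reducing to the two endpoints of $P_S$.

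Your Case~1 comparison never uses that the apex lies in the ring, so it already covers Case~2. Case~2 is in fact the only case that occurs: the ancestors on layer $H\cdot k$ of at most $3$ consecutive vertices span at most $3$ vertices, so they cannot straddle $P_S$. The Case~2 side claims are not valid as stated and should just be dropped: a shrink factor $\ge 2$ points the wrong way for a lower bound, and the legs need not have length $\ge H$. When fixing constants, note that only $W-1$ vertices are guaranteed strictly between $a$ and $b$, and that you need the horizontal segment's length to reach $2\log_3(W)-2$.
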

\begin{proof}
    Consider two vertices $u$ and $v$ from the non-adjacent tiles.
    As they are non-adjacent, there is a large tile $S$ between them.
    Assume without loss of generality that $u$ lies to the left of $S$ and $v$ lies to the right of $v$.
    We first argue that we essentially only have to consider the case where $u$ and $v$ lie on the smallest layer of the considered ring.
    For this, let $P_S$ be the path in this smallest layer that defined $S$.
    Recall that due to the definition of the large tile $S$, the right-most ancestors of $u$ and $v$ in the layer of $P_S$ are to the left and right of $P_S$, respectively.
    Note that the left-most ancestor of $v$ might lie on $P_S$, but then it is the last vertex on $P_S$ due to \cref{lem:3_7_tiling_at_most_two_ancestors}.

    Thus, when considering the shortest $u$-$v$-path as described in \cref{lem:3_7_tiling_shorest_paths}, the vertical path from $u$ to its ancestor $u'$ cannot enter the separating tile $S$.
    Moreover, the vertical path from $v$ to its left-most ancestor $v'$ might enter $S$ but only barely.
    In any case, the distance between $u$ and $v$ is strictly larger than the distance between the first and last vertex of $P_S$.
    By construction, $P_S$ contains $W$ vertices.
    Thus it remains to give a lower bound on the distance between two vertices on the same layer based on the length of the horizontal path.

    For this, let $u_0 = u$ and $v_0 = v$ be the vertices we start with on the same layer $\ell$ and let $u_i$ and $v_i$ be their ancestors on layer $\ell - i$ in the shortest path $u$-$v$-path as described by \cref{lem:3_7_tiling_shorest_paths}.
    Let $W_i$ be the number of vertices on the horizontal path between $u_i$ and $v_i$ in layer $\ell - i$.
    Clearly $W_0 = W$.
    Moreover, due to \cref{lem:3_7_tiling_shorest_paths}, we have to decrease the layer until $W_i \le 3$ (note that the lemma refers to the length of $2$ for the horizontal path, while we are here interested in the number of vertices, which is one larger).
    Now consider how $W_i$ changes when going from layer $\ell - i$ to layer $\ell - (i + 1)$.
    Now, due to \cref{lem:3_7_tiling_exponential_growth}, we get that $W_i \le 3 \cdot W_{i + 1} + 1$ and thus $W_{i + 1} \ge W_i / 3 - 1/3$.
    From this, we get $W_i \ge W_0 / 3^i - \sum_{j = 1}^i 1/3^j$.
    The geometric sum is upper bounded by $1/2$ and $W_0 = W$, yielding $W_i \ge W / 3^i - 1/2$.
    Let $\hat\imath$ be the largest $i$ we see in the shortest path.
    As mentioned above, we then have $W_{\hat\imath} \le 3$.
    Thus, $3 \ge W / 3^{\hat\imath} - 1/2$, which resolves to $\hat\imath \ge \log_3(W) - \log_3(3 + 1/2)$.
    It follows that the distance between $u$ and $v$ is at least $2\hat\imath \ge 2 \log_3(W) - 3$, where the $3$ comes from $\log_3(3 + 1/2) \le 3/2$.  As the distance between the non-adjacent tiles is strictly larger than the distance between $u$ and $v$, we get the claimed bound. 
\end{proof}

Finally, we consider tiles from adjacent rings.

\begin{lemma}
    \label{lem:tiling_large_r_adj_rings}
    Two non-adjacent large tiles from adjacent rings have distance at least $\log_3(W) - 1$.
\end{lemma}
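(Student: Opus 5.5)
Let $t_1$ be a large tile in ring $k-1$ and $t_2$ a non-adjacent large tile in ring $k$, with $u \in t_1$ and $v \in t_2$. I would reduce to the setting of \cref{lem:tiling_large_r_same_ring} by projecting both endpoints onto the bottom layer $L = H\cdot k$ of ring $k$. Take a shortest $u$-$v$-path of the canonical form from \cref{lem:3_7_tiling_shorest_paths}. Since $v$ lies in ring $k$ and $u$ below it, this path contains a vertex $w$ on layer $L$; $w$ lies in some large tile $t_w$ of ring $k$. Moreover, $w$ has a parent $p$ on layer $L-1$, which lies in some tile of ring $k-1$.

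The merging of triples is the key input. Whenever two adjacent vertices of layer $L-1$ belong to different tiles of ring $k-1$, their common child $w'$ lies in a merged triple. This triple contains the tile of $w'$ together with the tiles on both sides. Hence every vertex of layer $L$ whose parent(s) straddle a boundary between ring-$(k-1)$ tiles lies in a merged tile that is adjacent to both ring-$(k-1)$ tiles involved. From this I would argue as follows: if $t_1$ and $t_2$ are non-adjacent, then every vertex $w$ on layer $L$ that is a child of a vertex of $t_1$ lies in a tile of ring $k$ adjacent to $t_1$, hence in a tile different from $t_2$ and not overlapping it. In fact I aim to show there is an entire large tile $S$ of ring $k$, consisting of at least $W$ consecutive vertices of the defining path on layer $L$, lying strictly between the set of children of $t_1$ on layer $L$ and the tile $t_2$. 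This should follow from the merged triple: the tile of ring $k$ adjacent to $t_1$ on the side towards $t_2$ is flanked by further tiles, and $t_2$ must be beyond at least one full original tile, since otherwise $t_2$ would touch $t_1$ via a shared child.

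With $S$ in hand, I would repeat the argument of \cref{lem:tiling_large_r_same_ring}. Using \cref{lem:3_7_tiling_at_most_two_ancestors}, the vertical parts of the canonical shortest path from $u$ and from $v$ can only barely enter $S$. So the path must pass around the defining path $P_S$ of $S$, whose $W$ vertices lie on one layer. The descending analysis with $W_{i+1} \ge W_i/3 - 1/3$ then shows that the path must climb at least $\log_3(W) - O(1)$ layers above $L$ before the horizontal gap drops to $3$. Unlike the same-ring case, only one side of this ascent is forced. The $u$ side already starts below $L$, so it may contribute nothing extra, or the ascent could be shared. I therefore expect to get only $\log_3(W) - 1$ rather than $2\log_3(W) - 2$, which matches the claimed bound.

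The main obstacle is the combinatorial step that exhibits a full separating tile $S$ of ring $k$ between $t_1$'s children and $t_2$, using the triple merging and the tie-breaking towards right-most ancestors. I expect a careful case distinction here on which side $t_2$ lies and on whether $t_1$'s boundary child belongs to a merged triple. If a full tile cannot be guaranteed, I would fall back on at least $W$ vertices of some path on layer $L$ separating them, which suffices for the same estimate.
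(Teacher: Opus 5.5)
Your approach is sound and takes a different route from the paper. The paper does not analyze the $u$--$v$ path itself. It takes the right-most original (pre-merge) tile $t_1'$ of ring $k$ adjacent to $t_1$ and notes that $t_1'$ was merged with both of its neighbors, so the original tile $t_r'$ separates $t_1'$ from $t_2$. It then quotes \cref{lem:tiling_large_r_same_ring} to get distance at least $2\log_3(W)-2$ between $t_1'$ and $t_2$. Finally it takes a descendant $u'$ of $u$ in $t_1'$ with $\mathrm{dist}(u,u')\le d$, and the triangle inequality gives $2d\ge\mathrm{dist}(u',v)$, so the bound is literally half the same-ring bound. You instead explain the halving structurally: in the canonical path of \cref{lem:3_7_tiling_shorest_paths}, only the ascent of the $v$-side out of layer $L$ is forced. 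This costs you a re-run of the counting instead of a black-box citation. In exchange, you never have to locate a descendant of $u$ inside the one specific tile $t_1'$ within distance $d$, and $u$ at every depth of ring $k-1$ is handled uniformly.

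Two details need fixing when you write it out.

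First, the separator cannot be a post-merge large tile. $t_2$ may be the large tile immediately following the merged triple that contains the boundary child $w$ of $t_1$, so no full large tile need lie strictly in between. What you do get, and this is your fallback, is the flanking original tile inside that triple, whose $W$-vertex defining path lies strictly between the children of $t_1$ and $t_2$. This holds on both sides of $t_1$, which also settles the cyclic orientation.

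Second, the $u$-side of the canonical path never reaches layer $L$, so the same-ring recurrence does not apply verbatim, and the remark that $u$'s vertical part ``barely enters $S$'' is vacuous. Project $u$ downwards instead:
\begin{itemize}
\item A child with a unique parent inherits that parent's tile, so $u$ has a descendant $c$ among the children of the top layer of $t_1$.
\item Let $u',\dots,v'$ be the $h+1\le 3$ top vertices of the path, on layer $\ell^*$. Their descendants on layer $L$ form an interval containing $c$ and $v$'s ancestor on layer $L$.
\item This interval therefore contains one of the two arcs between these vertices, and hence the full defining path of a flanking original tile.
\item By \cref{lem:3_7_tiling_exponential_growth} the interval has fewer than $(h+1.5)\,3^{L-\ell^*}$ vertices, while $d\ge (L-\ell^*)+h$. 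This yields $d>\log_3(W)-1$ for each $h\in\{0,1,2\}$.
\end{itemize}
That the descendants of $u$ may spill past the children of $t_1$ towards $t_2$ is harmless, since it only enlarges the interval.
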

\begin{proof}
    Let $t_1$ and $t_2$ be the two large tiles of adjacent rings $R_1$ and $R_2$, where $R_1$ denotes the smaller ring; see \cref{fig:adjacent_rings}.
    Assume without loss of generality that $t_1$ lies to the left of $t_2$.
    Moreover, from the original large tiles (i.e., before the merging of triples) let $t_1'$ be the right-most tile in $R_2$ that is still adjacent to $t_1$.
    Recall that $t_1'$ has been merged with its two neighbors, which we call $t_\ell'$ and $t_r'$, i.e., $t_\ell'$, $t_1'$, and $t_r'$ together form one large tile.
    Note that, $t_1'$ and $t_2$ are from the same ring and are separated by $t_r'$.
    Thus, by \cref{lem:tiling_large_r_same_ring}, $t_1'$ and $t_2$ have distance at least $2\log_3(W) - 2$.\footnote{Technically, $t_1'$ is no longer a large tile because it has been merged with $t_\ell'$ and $t_r'$.
      However, \cref{lem:tiling_large_r_same_ring} works independently of the merging of triples, so we can apply it here to the original large tiles.}

    Now consider vertices $u \in t_1$ and $v \in t_2$ with minimum distance $d$.
    By construction, $u$ has a descendant $u'$ in the bottom layer of $t_1'$, which clearly has distance at most $d$ form $u$.
    This yields a path of length $2d$ from $u' \in t_1'$ to $v \in t_2$.
    As argued above, such a path has length at least $2\log_3(W) -3$ and thus $d \ge \log_3(W) - 1$ as claimed.
\end{proof}
We are now ready to prove \cref{lem:tiling} for $ r \geq c \approx 0.62 $, which we restate here for convenience.

\begin{samepage}
\begin{lemma}[\cref{lem:tiling} for $ \mathsf{r \geq c} $]\label{lem:tiling_large_radius}
    \tilingLemma{$ r \geq c $}[$ O(3^{8r}) $]
\end{lemma}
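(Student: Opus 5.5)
We instantiate the construction above with concrete values of $H$ and $W$ and check both properties using the lemmas already proved. Set $H = \lceil 4r \rceil + 3$ and $W = \lceil 3^{4r+3} \rceil$; the exact constants only need to beat the additive losses in the distance lemmas. With these choices $H \ge 3$, so \cref{lem:tiling_large_r_count_small_tiles} applies. It gives that every large tile, including the merged triples and the tiles with a path of up to $2W$ vertices, consists of $O(3^H \cdot W) = O(3^{4r} \cdot 3^{4r}) = O(3^{8r})$ small tiles. The $0$-th ring is covered by the same bound, since it has $O(3^H)$ small tiles. Because $r \ge c$ and $c$ is the circumradius of a $7$-gon of the $\{7,3\}$-tiling, each small tile is covered by one disk of radius $r$ centered at its center. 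This proves the first property.

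For the second property we argue by contraposition. Let $x,y$ lie in non-adjacent large tiles and let $v_x, v_y$ be the small tiles containing them. By \cref{lem:geometric_graph_distances}, it suffices to show that the graph distance $d$ between $v_x$ and $v_y$ in $D$ satisfies $(d-1)/2 > 2r$, i.e., $d > 4r+1$. We bound $d$ from below by the distance between the two large tiles and split into three cases by the relative position of the rings.

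\begin{itemize}
\item \emph{Non-adjacent rings.} \Cref{lem:tiling_large_r_non_adj_rings} gives $d \ge H+1 \ge 4r+4$.
\item \emph{Same ring.} \Cref{lem:tiling_large_r_same_ring} gives $d \ge 2\log_3 W - 2 \ge 8r+4$.
\item \emph{Adjacent rings.} \Cref{lem:tiling_large_r_adj_rings} gives $d \ge \log_3 W - 1 \ge 4r+2$.
\end{itemize}

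In every case $d > 4r+1$, so $x$ and $y$ are at distance more than $2r$. Two points at distance at most $2r$ therefore lie in the same or in adjacent large tiles.

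The main obstacle is checking that the merged tiles still form a valid tiling and that the case analysis is exhaustive. Adjacency of large tiles means sharing an edge, so two large tiles that touch only at a vertex count as non-adjacent and must be covered by one of the three lemmas. The triple-merging was introduced exactly for these vertex-touching pairs across adjacent rings. I would check that after merging, every pair of large tiles that share only a corner falls under \cref{lem:tiling_large_r_adj_rings} or becomes adjacent.

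I would also verify connectivity of each large tile, so that it is a genuine tile of a tiling. This holds because each tile is a horizontal path together with all descendants assigned to it by the rightmost-ancestor rule. \Cref{lem:3_7_tiling_at_most_two_ancestors} keeps these assignments contiguous, so each merged region is a disk-like union of $7$-gons.
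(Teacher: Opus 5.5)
Your proposal is correct and follows the paper's proof: you take $H$ linear in $r$ and $W$ of order $3^{4r}$, and you get the first property from \cref{lem:tiling_large_r_count_small_tiles} together with $r \ge c$. You get the second from \cref{lem:geometric_graph_distances} and the three distance lemmas, and your constants check out. In particular, $W=\lceil 3^{4r+3}\rceil$ really gives $\log_3 W - 1 > 4r+1$, whereas the paper's stated choice $W = 3^{d-1}$ should read $3^{d+1}$, a harmless slip since the bound stays $O(3^{8r})$. The one point you leave implicit is the paper's remark that the first few rings, whose innermost layer has fewer than about $2W$ vertices, are each taken as a single large tile; this affects neither the $O(3^{8r})$ count nor the distance arguments.
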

\end{samepage}

\begin{proof}[Proof of \cref{lem:tiling} for $ \bm{r \geq c} $]
    We use the tiling as described above and choose appropriate values for $W$ and $H$.
    We need to make sure that any two points in non-adjacent large tiles have geometric distance larger than $2r$.
    Due to \cref{lem:geometric_graph_distances}, it is sufficient to show that the graph-theoretic distance is lager than $4r + 1$.
    Let $d = \lfloor 4r + 2\rfloor$ be the graph theoretic distance we aim for.

    Due to \cref{lem:tiling_large_r_non_adj_rings}, setting $H \ge d$ makes sure that large tiles from non-adjacent rings have graph-theoretic distance at least $d$.
    For non-adjacent large tiles in the same ring (\cref{lem:tiling_large_r_same_ring}) or adjacent rings (\cref{lem:tiling_large_r_adj_rings}), the graph-theoretic distance is at least $2\log_3(W) - 2$ and $\log_3(W) - 1$, respectively.
    The latter is clearly smaller and thus both are at least $d$ if $\log_3(W) - 1 \ge d$, which is equivalent to $W = 3^{d - 1}$.
    Thus, we set $H = d$ and $W = 3^{d - 1}$.
    As $H \ge 3$, we can apply \cref{lem:tiling_large_r_count_small_tiles} showing that each large tile contains $O(3^H \cdot W) = O(3^{2d}) = O(3^{8r})$, which is the desired bound.

    As a final note, we implicitly assumed in the construction that each ring contains enough vertices in its smallest layer such that we can split it using multiple paths of $W$ vertices.
    For our choice of $W$ and $H$, this is not necessarily the case for the first few rings.
    However, getting the whole ring as just one large tile until we have at least $2W$ vertices in the layer does clearly not change the asymptotics.
\end{proof}

\begin{figure}
    \centering
    \includegraphics[page=2]{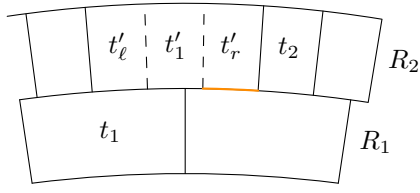}
    \caption{Two adjacent rings $ R_1, R_2 $ with large tiles $ t_1, t_2 $. Observe that $ t_1 $ and $ t_2 $ have smaller distance than $ t_1' $ and  $ t_2 $ (highlighted orange). The dashed lines indicate which tiles are merged.}
    \label{fig:adjacent_rings}
\end{figure}

To finish the subsection, it is now left to close the gap between $ d \approx 0.53 $ and $ c \approx 0.62 $.

\begin{proof}%
[Proof of \cref{lem:tiling} for $ \bm{r \geq d} $]
%
    For $ r \geq c \approx 0.62 $, \cref{lem:tiling_large_radius} gives the desired tiling.
    So, let $ r' \in [d, c] $ and consider the tiling provided by 
    \cref{lem:tiling_large_radius}.
    For this tiling, the second property is already stronger than what we need since $ 2r' \leq 2r $.
    For the first property, we are guaranteed that every tile can be covered by $ O(3^{8r}) $ disks of radius $ r = c $, but we have to use disks of radius $ r' \leq r $.
    For this, we show that a disk of radius $ c $ can be covered by constantly many disks of radius $ d $.
    Indeed, we have $ 2d \approx 1.06 > 0.62 \approx c $, so a disk of radius $ d $ touching the center of a disk of radius $ c $ leaves the larger disk and thus covers a circular sector with constant, positive angle $ \phi $.
    Hence, $ 2 \pi / \phi $ disks suffice to cover one disk of radius $ c $, and each tile can be covered by 
    $ O(3^{8r} \cdot 2 \pi / \phi) 
        = O(3^{8r}) 
        \subseteq O(3^{8(r' + c - d)}) 
        = O(3^{8 r'} \cdot 3^{c - d}) 
        = O(3^{8r'}) $ 
    disks of radius $ d \leq r' $.
\end{proof}


\subsection{Tiling for \texorpdfstring{$ \bm{r \leq d} $}{r < d}}
\label{sec:tiling_small_radius}

Having shown \cref{lem:tiling} for HUDGs with disk radii at least $ d \approx 0.53 $, it is left to deal with smaller radii.
Recall that $ d $ is chosen such that $ 2d $ is the distance between two opposite sides of a tile in a regular $ \{ 4, 5 \} $-tiling, which becomes important now.
We remark that even though row-treewidth is monotone under taking subgraphs, we cannot simply increase the radius to $ d $ and use the tiling from \cref{sec:tiling_large_radius} since this might also increase the clique size, possibly resulting in a super-constant clique size for the family.
We first prove a slightly weaker lemma in which points of distance at most $ 2r $ may lie in tiles sharing only a vertex but not an edge, which allows the tiling to have vertices with larger degree than 3.
To compensate, we require that the maximum degree is not too large and that the tiles are 4-gons that expand in a favorable way.
For this, we call a tiling \emph{concentric} if its vertices can be partitioned into $ C_1 $ containing a single vertex called the \emph{center} and cycles $ C_2, C_3, \dots $ such that each $ C_i $ is in the interior of $ C_{i + 1} $ and each vertex in $ C_i $ has a neighbor in $ C_{i + 1} $.
We call such a partition a \emph{concentric partition}.
Observe that this also induces a concentric partition of the dual, except for the innermost part being a facial cycle instead of a single vertex.

\begin{lemma}\label{lem:tiling_small_radius}
    For every $ r \in (0, d) $, there is an (irregular) concentric tiling of the hyperbolic plane with maximum degree $ 5 $ consisting of $ 4 $-gons such that
    \begin{itemize}
        \item every tile can be covered with $ O(1) $ disks of radius $ r $ and
        \item each two points with distance at most $ 2r $ lie in the same tile or in two tiles sharing a vertex.
    \end{itemize}

\end{lemma}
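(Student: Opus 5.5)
The plan is to start from the regular $\{4,5\}$-tiling and refine it by subdividing each quadrilateral into an $m \times m$ grid of smaller quadrilaterals, with $m = \lceil c_0 d / r \rceil$ for a suitable absolute constant $c_0$. Concretely, the four sides of each tile are split into $m$ equal segments. Opposite subdivision points are joined by geodesic segments, so each tile becomes an $m\times m$ grid of $4$-gons. Subdivision points on a shared side agree from both sides, because both tiles split that side identically. The result is again a tiling by $4$-gons. Interior vertices have degree $4$, vertices on the original sides have degree $4$, and the original corners keep degree $5$, so the maximum degree is $5$.

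For concentricity, I will reuse the concentric structure of the $\{4,5\}$-tiling, whose BFS-layers around a fixed corner form nested cycles. I will then refine it to obtain layers $C_1, C_2, \dots$ in the subdivided tiling. Inside each original annulus between two consecutive layer cycles, there are $m$ intermediate grid curves parallel to the old cycles. Together these give $m$ new nested cycles, each of whose vertices has an outward grid neighbor. The only delicate places are the original corners, where five tiles meet, but each still has a neighbor on the next cycle.

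For the two metric properties I will use a compactness/scaling argument rather than explicit trigonometry. Every original tile is congruent to a fixed compact quadrilateral $Q$ of diameter $O(1)$, so there is an absolute constant $L \ge 1$ bounding the distortion. Map $Q$ to the Euclidean unit square by a bi-Lipschitz map with constant $L$, for instance through the Klein model, which sends geodesics to straight lines. Each small cell then has diameter at most $O(L d/m) = O(r/c_0)$, so with $c_0$ large it lies inside one disk of radius $r$. That gives even $O(1)$ disks, namely one per cell.

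For the second property, the plan is to show that any point is at distance at least some $\delta = \Omega(d/m)$ from every cell not sharing a vertex with its own cell. Choosing $c_0$ so that $\delta > 2r$ finishes the proof. Within a single original tile this follows from the Euclidean grid picture and the bi-Lipschitz bound. The main obstacle is points lying in different original tiles, especially near an original corner of degree $5$, where the local picture is not a Euclidean grid. Here I would argue locally. The $L$-bi-Lipschitz charts extend to the star of each original vertex, the union of the five or four tiles around it, which is again a fixed compact configuration. A geodesic of length at most $2r \ll d$ stays inside one such star. There, the first ring of cells around the corner, together with the cells along the sides, separates each cell from everything it does not touch, by a margin proportional to $d/m$.
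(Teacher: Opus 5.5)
\textbf{Gap 1: the two scales.} Your combinatorial construction matches the paper's: an $m\times m$ grid inside each $\{4,5\}$-tile, degree $5$ only at the original corners, and concentric cycles refined from those of the $\{4,5\}$-tiling. The metric part, however, asks one parameter $c_0$ to do two incompatible things. For the covering you take $c_0$ large, so that each cell lies in \emph{one} disk of radius $r$. For the second property you choose $c_0$ so that the margin $\delta=\Omega(d/m)$ exceeds $2r$, which needs $c_0$ small.

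No choice works, and not only because of lossy constants. Suppose a cell $M$ lies in a disk of radius $r$. Then the distance between two opposite sides of $M$ is strictly less than $\mathrm{diam}(M)\le 2r$. The two cells flanking $M$ across these sides then contain points at distance less than $2r$, yet they share no vertex. So ``one disk per cell'' contradicts the second property, and the $O(1)$ in the statement is essential.

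The missing idea is the paper's two-scale argument:
\begin{itemize}
\item Fix the grid \emph{only} from the separation requirement. The paper takes $k$ maximal with $s=2d/k>2r$, so $2r<s\le 4r$.
\item Afterwards show that each cell has diameter $O(r)$; the paper proves longest side $<1.5s$.
\item Cover each cell by a constant number of disks, e.g.\ refine it into $16\times 16$ subcells of diameter at most $r$, giving $256$ disks.
\end{itemize}

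\textbf{Gap 2: the range of $r$.} Your separation argument assumes $2r\ll d$ and passes through a bi-Lipschitz chart. It therefore only gives $\delta\ge c\,d/m$ for an unspecified $c$. Since $m\ge 1$, this cannot certify $\delta>2r$ when $r$ is close to $d$, yet the lemma is claimed for all $r\in(0,d)$. Near $r=d$ you need exact geometry: tiles of the plain $\{4,5\}$-tiling that share no vertex are at distance at least $2d$. This is the case $k=1$ in the paper.

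The paper gets exact control for every $k$ by its choice of grid lines. They are perpendicular to the common perpendicular $p$ of two opposite sides and equally spaced \emph{along $p$}, so consecutive lines have a common perpendicular of length exactly $2d/k$. Your lines, joining equally spaced points on opposite sides, are also perpendicular to $p$ by the reflection symmetry of the tile. Their spacing along $p$, however, is only $\Theta(d/m)$, not exactly $2d/m$. So either switch to the paper's spacing, or treat $r\ge\varepsilon d$ separately using the unsubdivided tiling and an exact separation bound.
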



\begin{figure}
    \centering
    \includegraphics{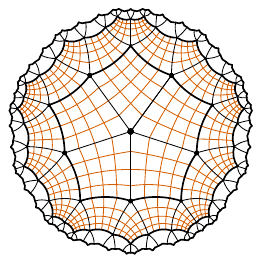}
    \caption{%
        A $ \{4, 5\} $-tiling (black) that is subdivided (red) so that each small tile can be covered by $ O(1) $ disks of radius $ r $.
    }
    \label{fig:subdivide_tilings_overview}
\end{figure}

\begin{proof}
    First observe that a $ \{ 4, 5 \} $-tiling satisfies the second property for all $ r < d $ since the distance between any two points in distinct tiles not sharing a vertex is at least as large as two points on opposite sides of a tile, hence their distance is at least $ 2d > 2r $.
    To also fulfill the first property, we subdivide the tiles so that they become small enough to be covered by a constant number of disks with radius $ r $, while keeping them large enough such that they cannot be crossed by a segment of length $ 2r $, i.e., no segment of this length intersects two opposite sides of a tile.
    
    \begin{figure}
        \centering
        \includegraphics[page = 1]{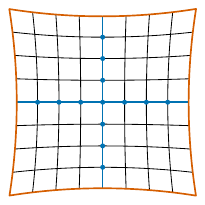}
        \hspace{4em}
        \includegraphics[page = 3]{subdivision}
        \hspace{1em}
        \includegraphics[page = 4]{subdivision}
        \caption{%
          Left: 8-subdivision of a regular 4-gon. The subdivision points are marked blue.
          Center: The 4-gon used for \cref{lem:tiling_small_radius}.
          Right: Illustration of the Saccheri quadrilateral from \cref{lem:tiling_small_radius}.
        }
        \label{fig:subdivision}
    \end{figure}
    
    Let us first define and analyze the subdivision before we apply it to a specific value $ r $.
    For $ k \geq 1 $, we define a $ k $-subdivision of a regular 4-gon $ S $ in a $ \{ 4, 5 \} $-tiling as follows, see also \cref{fig:subdivide_tilings_overview,fig:subdivision} (left).
    For a pair of opposite sides, consider the unique common perpendicular segment $ p $.
    We remark that $ p $ exists and is unique by the ultraparallel theorem, and since $ S $ is regular, $ p $ contains the center of $ S $ and has length $ 2d $.
    We subdivide $ p $ into $ k $ segments of length $ 2d / k $ each, and call the $ k - 1 $ points that are endpoints of two such segments the \emph{subdivision points}.
    Now we take the perpendiculars to $ p $ at each subdivision point and call their restrictions to $ S $ the \emph{subdivision segments}.
    The $ k $-subdivision of $ S $ is then obtained by subdividing at each subdivision segment, for both pairs of opposite sides of $ S $, and consists of $ k^2 $ 4-gons, which we call \emph{small tiles}.
    In particular, the $ 1 $-subdivision is $ S $ itself.
    
    Next, we upper- and lower-bound the side lengths of the small tiles, where the upper bound is needed for the first property and the lower bound by the second.
    We remark that the sides get longer the closer a small tile is to a vertex of $ S $, while they are shorter close to $ p $. 
    The lower bound of $ 2d / k $ is given by the distance between two consecutive subdivision points.
    Indeed, since the segment between two subdivision points is perpendicular to the subdivision segments, it is the shortest segment between them.
    
%

    For the upper bound, let $ \ell $ denote the length of a longest side of a small tile and let $ s = 2d / k $ denote the distance between two consecutive subdivision points.
    We aim to bound $ \ell $ in terms of $ s $.
    We note that if $k = 1$, then $ s = 2d $ and $ \ell $ is just the side length of the 4-gon of the $\{4, 5\}$-tiling and we get a bound from \cref{lem:distances-in-tilings}.
    However, the following more general argument holds for any $k \ge 1$.
    To bound $\ell$, consider the 4-gon $S'$ that is bounded by some side of length $ \ell $, by two parallel subdivision segments, and by the perpendicular $ p $ to the two subdivision segments.
    We refer to \cref{fig:subdivision} (center) for an illustration.

    We note that $S'$ is almost a Saccheri quadrilateral, i.e., a quadrilateral that has two equal sides perpendicular to its base, and a forth side opposite the base called \emph{summit}. 
    Here, our base is $ s $ and we have to perpendicular legs, but they do not have exactly the same length.
    One leg has length $ d' \approx 0.62 $, where $ 2d' $ is the side length of a tile in a $ \{ 4, 5\} $-tiling; see \cref{lem:distances-in-tilings}.
    The other leg is slightly shorter with length between $d'$ and $d$.
    To bound the length of $\ell$, we do the following.
    We mirror $S'$ along the leg of length $d'$ yielding a Saccheri quadrilateral with base of length $2s$ and two legs of length at most $d'$.
    Let $\ell'$ be the length of its summit opposite to the base; see \cref{fig:subdivision}.
    Note that this summit of length $\ell'$ also forms an isosceles triangle with the two sides of length $\ell$ with an angle of $2 \cdot 2\pi / 5 = 4 \pi / 5$ between them.
    We now do two estimations.
    First as this angle in the triangle is rather large, we see that $\ell'$ is roughly twice $\ell$.
    Thus, instead of bounding $\ell$ in terms of $s$, we bound $\ell'$.
    And second, we use the standard formula for Saccheri quadrilaterals to bound the length $\ell'$ of the summit by the length of the base.

    For the triangle, note that it can be split into two right triangles with hypotenuse $\ell$, side $\ell'/2$ and opposite angle $2\pi / 5$.
    Using trigonometry of right angles, we get
    \begin{equation*}
        \sin \frac{2 \pi}{5} = \frac{\sinh \frac{\ell'}{2} }{\sinh \ell}
        \quad \iff \quad
        \sinh \ell = \frac{\sinh \frac{\ell'}{2} }{\sin \frac{2 \pi}{5}}
    \end{equation*}
    It follows from the definition of $\sinh$ that $\ell \le \sinh \ell$, so we aim to upper-bound $ \sinh(\ell'/2) $.
    For this, we use that $ \sinh(x) \leq x \cdot \sinh(a) / a $  holds for all $ x \leq a $, which we apply for $ a = 0.63 $.
    Then, as $\ell' / 2 \le \ell \le d' < 0.63$ we obtain $\sinh(\ell' / 2) \le \ell' / 2 \cdot \sinh(0.63) / 0.63$.  Thus, we have
    \begin{equation*}
        \ell \le
        \sinh(\ell) =
        \frac{\sinh \frac{\ell'}{2} }{\sin \frac{2 \pi}{5}} \le
        \frac{\sinh (0.63)}{2 \cdot 0.63 \cdot \sin \frac{2 \pi}{5}} \cdot \ell'
        < 0.57 \ell'.
    \end{equation*}

    To bound $\ell'$ depending on $s$, recall that we have a Saccheri quadrilateral with base $2s$, legs of length $\le d'$ and summit $\ell'$.
    Thus, we get
    \begin{equation*}
        \sinh \frac{\ell'}{2} \le \cosh d' \cdot \sinh s.
    \end{equation*}
    Now we can use the same type of approximations as for the triangle: $\ell' / 2 \le \sinh(\ell' / 2)$ and $s \le d < 0.54$ implies $\sinh s \le s \cdot \sinh(0.54) / 0.54$.  Thus, we have 
    \begin{equation*}
        \frac{\ell'}{2} \le
        \sinh \frac{\ell'}{2} \le
        \cosh(0.63) \cdot \frac{\sinh(0.54)}{0.54} \cdot s
        \quad \implies \quad
        \ell' < 2.53 s.
    \end{equation*}
    Putting $\ell < 0.57\ell'$ and $\ell' < 2.53 s$ together yields $\ell < 1.5 s$.

    To finish the proof, it is now left to choose a suitable $ k $ depending on $ r $.
    Recall that we aim for a tiling whose tiles are small enough to be covered with a constant number of disks of radius $ r $ and, at the same time, are large enough to separate tiles not sharing a vertex.
    To meet the second requirement, we choose the shortest distance $ s $ between two opposite sides of a small tile to be larger than $ 2r $.
    That is, we choose $ k $ maximal such that $ s = 2d / k > 2r $.
    Note that as long as $s$ is larger than $4r$, we continue subdividing so that $s$ is halved.
    Thus, choosing $k$ maximal such that $s > 2r$ implies $2r < s \le 4r$.

    To cover the resulting tiles with 256 disks of radius $ r $, consider the $ k' $-subdivision of the initial regular 4-gon with $ k' = 16 k $.
    Note that this refines the the $ k $-subdivision since we have $ (16 k)^2 $ small tiles, compared to $ k^2 $ in the $ k $-subdivision, i.e., each small tile of the $ k $-subdivision is further subdivided into 256 even smaller tiles.
    In the $ k' $-subdivision, the shortest distance between two opposite sides of a tile is $ s' = s / 16 \leq r / 4 $, so the length of a longest side is at most $ 1.5 s' < r / 2 $.
    With the triangle inequality, we conclude that the distance between any two points in a tile of the $ k' $-subdivision is at most $ r $.
    Hence, putting the center of a disk with radius $ r $ at any point in a tile covers the whole tile.
    Therefore, each of the 256 subtiles can be covered by a single disk, and thus each tile of the $ k $-subdivision can be covered by 256 disks of radius $ r $.

    To show that the tiling is concentric, we start by defining the cycles $C_1, C_2, \dots$.
    For $C_1$, we pick a vertex of degree five, i.e., a vertex of the original tiling.
    For $C_i$, consider the graph that remains after deleting the vertices form the previous cycles $C_1, \dots, C_{i - 1}$.
    Then, $C_i$ consists of the vertices incident to the face containing the previous cycles.
    To see that this gives concentric cycles, first consider the case that we do not subdivide, i.e., we have just the $\{4, 5\}$-tiling; also see \cref{fig:subdivision-concentric} (left).
    Due to the fact that all faces are quadrilaterals, each vertex of $C_2$ has either one or no edge to the center.
    Moreover, each vertex has two edges in the cycle $C_2$, which leaves two or three edges to the outside.
    As all faces are quadrilaterals, this invariant remains true for later layers.
    As each vertex has degree $5$, two neighbors in its cycle and at most one neighbor in the previous cycle, it must have at least two neighbors in the next cycle.

    \begin{figure}
        \centering
        \includegraphics[page = 5]{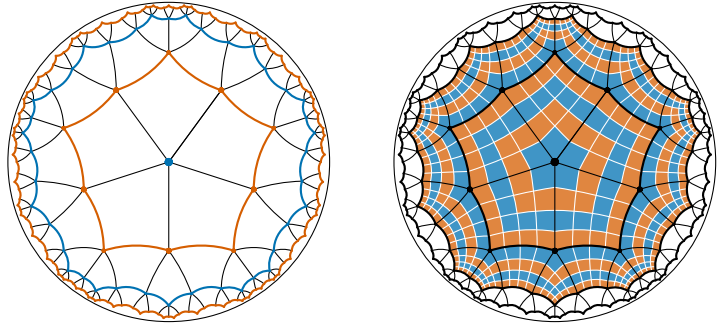}
        \caption{%
          Visualization of the tiling in \cref{lem:tiling_small_radius} being concentric.
          Left: The case where we do not subdivide.
          The cycles $C_1, C_2, \dots$ are colored alternatingly blue and red.
          Right: The case were the levels are subdivided.
          The cycles are at the transitions between a red and a blue regions.}
        \label{fig:subdivision-concentric}
    \end{figure}
    
    For the case where we subdivide, observe in \cref{fig:subdivision-concentric} (right) that each of the quadrilaterals from the original $\{4, 5\}$-tiling is subdivided into a normal $k \times k$-grid.
    The cycles $C_1, C_2, \dots$ now behave exactly the same as in the original $\{4, 5\}$-tiling, except that in between two of the previous cycles, we take $k$ steps to eat away the $k \times k$ grid of each of the original tiles.
\end{proof}

Note that the tiling constructed for \cref{lem:tiling_small_radius} becomes in a sense more Euclidean the further we subdivide.
For no subdivision, we have the regular $\{4, 5\}$-tiling where every vertex has degree $5$.
The subdivision now splits each tile into small tiles forming a normal Euclidean grid (except that the tiles are not quite squares).
Thus, the more we subdivide, the more Euclidean the tiling gets.
This is also reflected in the fact that the fraction of degree-$5$ vertices compared to degree-$4$ vertices shrinks.

We finish the proof of \cref{thm:upper_bound} by lifting \cref{lem:tiling_small_radius} to prove \cref{lem:tiling} for $ r \leq d $, that is, we aim for a tiling such that, first, each tile can be covered by $ O(1) $ disks of radius $ r $, and second, each two points with distance at most $ 2r $ are either in the same tile or in two tiles sharing an edge.
Note that the difference between the two lemmas is that for the second property, it does not suffice to share a vertex but the tiles are required to share an edge.

\begin{proof}[Proof of \cref{lem:tiling} for $ \bm{r \leq d} $]
    Consider a tiling from \cref{lem:tiling_small_radius} and note that the challenging situation with tiles sharing a vertex but not an edge occurs at vertices of degree 4 or 5.
    To solve this, we merge tiles so that the resulting tiling has maximum degree 3 and satisfies both properties of \cref{lem:tiling}.
    The first property is already met, so we only need to make sure to maintain it by not merging too many tiles.
    For the second property, observe that if at most three tiles meet at any point, then two tiles that share a vertex also share an edge.
    That is, we make sure that each two tiles sharing a vertex in the tiling of \cref{lem:tiling_small_radius} are merged or made adjacent, which guarantees the second property.
    

    \begin{figure}
        \centering
        \includegraphics{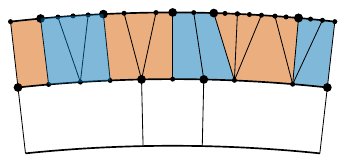}
        \caption{Merging tiles such that at most three large tiles meet at each vertex. The invariant applies to the thick vertices.}
        \label{fig:small_radius_invariant}
    \end{figure}

    Let $ C_1, C_2, \dots $ be the center and cycles of a concentric partition of the vertices of our tiling.
    Recall that by definition, each vertex in $ C_i $ has a neighbor in $ C_{i + 1} $, and thus each vertex has at least two incident tiles in the exterior.
    The base case consists of the up to five tiles incident to the center, which we merge into a single large tile, where a \emph{large tile} refers to a tile of the tiling we construct.
    Starting from here, we deal with the tiles cycle by cycle, maintaining the invariant that after handling all tiles in the interior of $ C_{i} $, the vertices in $ C_{i} $ that are already incident to two large tiles form an independent set.
    That is, the invariant controls those vertices we need to handle most carefully as they already have two incident large tiles, so they may get only one further.
    For $ C_2 $, the invariant is clearly satisfied as we only have one large tile so far.
    Now assume that the invariant holds for some $ C_i $, and we aim to deal with the incident tiles in the exterior of $ C_{i} $, which also induce a cycle.
    We refer to \cref{fig:small_radius_invariant} for an illustration.
    For each vertex that is already incident to two large tiles in the interior of $ C_{i} $, merge the two or three incident tiles in the exterior into a common large tile, yielding degree 3.
    Since each two of these vertices are non-consecutive in the cycle by our invariant, and each vertex in $ C_i $ has a neighbor in $ C_{i + 1} $, the resulting large tiles are pairwise distinct.
    The remaining tiles are split into chunks of three adjacent tiles wherever possible, and each chunk creates a new large tile.
    Since every vertex in $ C_i $ is incident to at most four tiles in the exterior of $ C_i $, this guarantees that each vertex is incident to at most two large tile in the exterior and at most three in total.
    For all tiles that are left, at least one of their neighboring tiles already belongs to some large tile.
    We let these tiles join any of their neighbors.
    Now the large tiles are made of two to five tiles, and thus vertices in $ C_{i + 1}$ that are incident to two large tiles are non-adjacent, satisfying our invariant.
\end{proof}


\section{Open Questions}\label{sec:open}

In this paper, we characterize for which functions $ r $ families of HUDGs with constant clique number and disk radius $ r $ admit product structure:
If $ r \in O(1) $, then every family has bounded row-treewidth and thus admits product structure, otherwise there are families with unbounded row-treewidth.
However, our upper bound is exponential in the disk radius, whereas our lower bound is only logarithmic. 
It is an interesting open question whether the exponential dependency is necessary, both for the tiling in \cref{lem:tiling} and for the row-treewidth of HUDGs.
We conjecture that it is indeed necessary for the tiling, but believe that the base can be improved, which would also improve the bound on the row-treewidth.
Note that even if there is an exponential lower bound for the tiling, it does not necessarily transfer to a lower bound on the row-treewidth of HUDGs.

\begin{question}
    What is the row-treewidth of HUDGs with clique number $ \omega $ and disk radius~$ r $?
\end{question}

With our lower-bound construction, we show that there are SHUDGs with radius $ \Theta(\log n) $ having treewidth $ \Omega( \omega \frac{\log n}{\log \log n} ) $.
This almost matches the upper bound of $ O(\omega \cdot \log n) $~\cite{Struc_Indep_Hyper_Unifor_Disk_Graph-Blaesius25,Treew_Graph_Balan_Separ-DvoraNorin19}.
We ask whether the remaining gap can be closed.
Moreover, as our construction provides a family of SHUDGs with clique number $\omega \in O(\log \log n)$, we ask whether such a bound can be achieved for every function $\omega(n)$; specifically for constant $\omega$.

\begin{question}
    Is there a family of $ n $-vertex HUDGs with clique number $ \omega $ and treewidth $ \Theta(\omega \cdot \log n) $?
    Is there a family with constant clique number and treewidth $\Theta(\log n)$?
\end{question}

Finally, we observe that our lower bound on the row-treewidth stops growing as soon as the disk radius is logarithmic in the number of vertices.
We conjecture that the reason is that for larger disk radii, there are no new families.

\todo[inline]{this only make sense for constant clique number: for $ \omega(n) = \log \log n $, of course, the row-treewidth stops increasing as it is upper-bounded by the treewidth. We maybe should discuss this somewhere.}

\begin{question}
    Are there families of HUDGs that do not admit a disk representation in the hyperbolic plane such that the disk radius is $ O(\log n) $?
\end{question}

\todo[inline]{Some more open questions: see discord / boxes below}


\Thomas{random question: Assume we have a family of graphs with $\omega = f(n)$ and $\mathrm{tw} = g(n)$ such that $f(n) \in o(g(n))$.
  Can I find an induced subgraph for each graph in the family such that the family of subgraphs has $\omega\in O(1)$ and $\mathrm{tw}\in \omega(1)$?
  This is probably too strong to be true, but I would also find it not completely unbelievable.}
  
\todo[inline]{%
    For non-induced subgraphs, this is true by the grid minor theorem.
    There are also grid minor theorems for induced minors, e.g., for bounded degree graphs and for $ H $-minor-free graphs.
    For theses graph classes, the question also has a positive answer:
    Although the branch sets do not only contain trees here, there is never a reason far a large clique in a branch set as a grid has bounded degree.
    However, our family does not belong to either of the two classes.
    
    Induced grid minor theorems need to be less general than the non-induced version as complete graphs have large treewidth but do not contain large induced grids, but I am not arware of any other counterexample. 
    \url{https://doi.org/10.1016/j.jctb.2023.01.002} (2023) seems to conjecture that sparse graph classes admit induced grid minor theorems (not sure how to interpret \enquote{one could expect}).
    I would even ask for something stronger:
    
    1. Is there a function $ f $ such that every graph with treewidth $ t $ and clique number $ \omega $ contains an induced grid minor of size $ f(t/\omega) $?
    
    2. A positive answer to 1. implies a positive answer to the blue questions above. But is it also equivalent?
}

\bibliography{lipics-v2021-sample-article}

\end{document}